\numberwithin{equation}{section}
\newtheorem{theorem}{Theorem}[section]
\newtheorem{lemma}[theorem]{Lemma}
\newtheorem{corollary}[theorem]{Corollary}
\newtheorem{proposition}[theorem]{Proposition}
\theoremstyle{definition}
\newtheorem{definition}[theorem]{Definition}
\newtheorem{notation}[theorem]{Notation}
\newtheorem{remark}[theorem]{Remark}
\newcommand{\B}{{\mathcal B}}
\newcommand{\C}{{\mathbb C}}
\newcommand{\cD}{{\mathcal D}}
\newcommand{\cI}{{\mathcal I}}
\newcommand{\N}{{\mathbb N}}
\newcommand{\cN}{{\mathcal N}}
\renewcommand{\P}{{\mathcal P}}
\newcommand{\Q}{{\mathbb Q}}
\newcommand{\cR}{{\mathcal R}}
\newcommand{\T}{{\mathcal T}}
\newcommand{\Z}{{\mathbb Z}}
\newcommand{\ord}{\mathop{\rm ord}}
\renewcommand{\mod}[1]{{\ifmmode\text{\rm\ (mod~$#1$)}\else\discretionary{}{}{\hbox{ }}\rm(mod~$#1$)\fi}}
\newcommand{\ep}{\varepsilon}
\begin{document}

\title{The smallest invariant factor of the multiplicative group}
\author{Ben Chang}
\address{Department of Mathematics \\ University of Toronto \\ Bahen Centre \\ 40 St.~George St., Room 6290 \\ Toronto, Ontario, Canada \ M5S 2E4}
\email{bchang@math.toronto.edu}
\author{Greg Martin}
\address{Department of Mathematics \\ University of British Columbia \\ Room 121, 1984 Mathematics Road \\ Vancouver, BC, Canada \ V6T 1Z2}
\email{gerg@math.ubc.ca}
\subjclass[2010]{11N25, 11N37, 11N45, 11N64, 20K01}
\maketitle

\begin{abstract}
Let $\lambda_1(n)$ denote the least invariant factor in the invariant factor decomposition of the multiplicative group $M_n = (\Z/n\Z)^\times$. We give an asymptotic formula, with order of magnitude $x/\sqrt{\log x}$, for the counting function of those integers~$n$ for which $\lambda_1(n)\ne2$. We also give an asymptotic formula, for any even $q\ge4$, for the counting function of those integers~$n$ for which $\lambda_1(n)=q$. These results require a version of the Selberg--Delange method whose dependence on certain parameters is made explicit, which we provide in an appendix. As an application, we give an asymptotic formula for the counting function of those integers~$n$ all of whose prime factors lie in an arbitrary fixed set of reduced residue classes, with implicit constants uniform over all moduli and sets of residue classes.
\end{abstract}

%\tableofcontents

\section{Introduction} \label{intro sec}

The multiplicative group $M_n = (\Z/n\Z)^\times$ of the quotient ring $\Z/n\Z$ is a finite abelian group whose structure is closely tied to functions of interest to number theorists. Most obviously, its cardinality is the Euler function $\phi(n)$, whose extreme values and distribution have been extensively studied. However, there can be many abelian groups of cardinality $\phi(n)$, which leads us to wonder which such group $M_n$ is; for example, asking whether $M_n$ is cyclic is precisely the same question as characterizing those moduli~$n$ that possess primitive roots.

To study the structure of this family of groups more finely, we may ask about the invariant factor decomposition of~$M_n$, namely the unique way of writing $M_n \cong \Z/d_1\Z \times \Z/d_2\Z \times \dots \Z/d_\ell\Z$ where $d_1\mid d_2\mid \cdots \mid d_\ell$. For example, it is not hard to show that the length $\ell$ of the invariant factor decomposition of~$M_n$ is essentially the number of distinct prime factors of~$n$ (more precisely, these two quantities differ by $1$, $0$, or $-1$ depending upon the power of~$2$ dividing~$n$). Moreover, the largest invariant factor $d_\ell$ is the exponent of the group $M_n$, or precisely the Carmichael function $\lambda(n)$ which has also been well studied (see~\cite{EPS}).

In this article, we investigate instead the distribution of the smallest invariant factor~$d_1$ of the multiplicative group~$M_n$, which we denote by $\lambda_1(n)$. Other than for $n=1$ and $n=2$, this smallest invariant factor is always even (see the proof of Proposition~\ref{lif to factorization prop} below and the subsequent remark). It turns out that $\lambda_1(n)=2$ for almost all integers~$n$; however, we can be more precise about the exceptions, as our first theorem asserts.

\begin{definition} \label{Tx def}
Define $T(x)$ to be the number of positive integers $n\le x$ such that the least invariant factor~$\lambda_1(n)$ of~$M_n$ does not equal~$2$.
\end{definition}

We prove:

\begin{theorem} \label{T theorem}
For $x\ge 2$, we have $T(x) = (H_4+H_6)\dfrac{x}{\sqrt{\log x}} + O \bigg( \dfrac x{(\log x)^{3/4}} \bigg)$, where
\begin{equation} \label{H4 H6 def}
\begin{split}
H_4 &= \frac3{2^{5/2}} \prod_{p\equiv3\mod4} \bigg( 1-\frac1{p^2} \bigg)^{1/2} \approx 0.490694 \\
H_6 &= \frac7{2^{5/2} \cdot 3^{3/4}} \prod_{p\equiv5\mod6} \bigg( 1-\frac1{p^2} \bigg)^{1/2} \approx 0.527129.
\end{split}
\end{equation}
\end{theorem}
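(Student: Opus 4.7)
The plan is to translate the condition $\lambda_1(n)\ne 2$ into explicit conditions on the prime factorization of~$n$, and then count each resulting multiplicative set by the Selberg--Delange method. Since $M_n$ decomposes via the Chinese Remainder Theorem as the direct sum of $M_{p^a}$ over $p^a\|n$, writing $\lambda_1(n)=\prod_q q^{e_{q,\ell}}$ with $\ell$ the length of the invariant-factor decomposition and $e_{q,\ell}$ the $\ell$-th largest exponent in the $q$-primary part, one sees that $\lambda_1(n)\ne 2$ holds precisely when either $v_2(\lambda_1(n))\ge 2$ --- which forces $v_2(n)\le 1$ together with every odd prime $p\mid n$ satisfying $p\equiv 1\pmod 4$ --- or $v_q(\lambda_1(n))\ge 1$ for some odd prime~$q$ --- which forces $v_2(n)\le 1$ and every odd $p\mid n$ with $p\ne q$ satisfying $p\equiv 1\pmod q$, subject to minor adjustments when $q\mid n$.

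I would then sort these conditions by their density. The two dominant sets, corresponding to ``every odd $p\mid n$ satisfies $p\equiv 1\pmod 4$'' and ``every odd $p\mid n$ satisfies $p\equiv 1\pmod 3$'', each restrict to a set of primes of Dirichlet density~$1/2$ and produce main terms of order $x/\sqrt{\log x}$. All other cases --- the conditions at odd primes $q\ge 5$, and the intersection of the mod-$4$ and mod-$3$ conditions (which forces $p\equiv 1\pmod{12}$) --- each restrict to density at most~$1/4$ and contribute $O(x/(\log x)^{3/4})$; summed with the uniform estimates of the appendix, these fit inside the stated error. Inclusion--exclusion then reduces $T(x)$ to the two main counts plus an acceptable error.

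To evaluate the mod-$4$ main count, one identifies the Dirichlet series
\[
F_4(s)=(1+2^{-s})\prod_{p\equiv 1\,(\bmod\,4)}(1-p^{-s})^{-1}=\bigl(\zeta(s)L(s,\chi_4)\bigr)^{1/2}G_4(s),
\]
where $\chi_4$ is the nontrivial character modulo~$4$ and $G_4$ is holomorphic and nonvanishing in a neighborhood of $\operatorname{Re} s\ge 1$, with $G_4(1)=\frac{3}{2\sqrt 2}\prod_{p\equiv 3\,(\bmod\,4)}(1-p^{-2})^{1/2}$. The Selberg--Delange theorem with $\kappa=1/2$ then yields the leading coefficient $G_4(1)\sqrt{L(1,\chi_4)}/\Gamma(1/2)$, which equals $H_4$ after using $L(1,\chi_4)=\pi/4$ and $\Gamma(1/2)=\sqrt\pi$. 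The mod-$3$ case runs in parallel with $L(s,\chi_3)$ in place of $L(s,\chi_4)$, after first isolating the local factor at $p=3$ (which is $1+3^{-2s}+3^{-3s}+\dotsb$, reflecting that $v_3(n)=1$ is excluded): the computation then uses $L(1,\chi_3)=\pi/(3\sqrt 3)$ and produces $H_6$.

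The main obstacle is achieving uniformity across the numerous small Selberg--Delange applications: the contributions from the $q$-sets for odd primes $q\ge 5$ must be estimated with error constants that do not blow up with~$q$, so that the total error survives as $O(x/(\log x)^{3/4})$. This uniformity is exactly what the explicit version of the Selberg--Delange method proved in the appendix is designed to furnish.
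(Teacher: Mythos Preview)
Your approach is essentially the same as the paper's: both reduce $T(x)$ by inclusion--exclusion to $D_4(x)+D_6(x)$ (your ``mod-$4$'' and ``mod-$3$'' counts) plus a tail over higher moduli, and both evaluate the leading constants exactly as you indicate. Your arithmetic characterization of $\lambda_1(n)\ne2$ and your computations of $H_4$ and $H_6$ are correct.

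There is, however, one genuine gap. The uniform Selberg--Delange estimate in the appendix applies only for moduli up to a small power of $\log x$ (see the hypothesis $q\le(\log x)^{1/3}$ in the simplified form), and even within that range the error constants grow with the modulus. So the appendix alone does \emph{not} let you sum the contributions over all odd primes $q\ge5$; your sentence ``this uniformity is exactly what the explicit version of the Selberg--Delange method \dots\ is designed to furnish'' overstates what the appendix delivers. The paper closes this gap by splitting the tail in two. For small moduli $m<M$ it uses the uniform bound $D_{2m}(x)\ll\frac{\log m}{\phi(m)}\cdot x(\log x)^{-1+1/\phi(2m)}$ together with $\sum_{m<M}\frac{\log m}{\phi(m)}\ll(\log M)^2$. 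For large moduli $m\ge M$ it introduces an auxiliary parameter~$b$, first passing to the variants $D^*_{2m}(x;b)$ restricted to $\omega(n)>b$ (the complement contributing $O_b\bigl(x(\log\log x)^{b-1}/\log x\bigr)$ uniformly), and then bounding $D^*_{2m}(x;b)\ll_b x(\log\log x)^{b-1}/m^{b-2}$ via Brun--Titchmarsh, which is summable in~$m$. With the choices $M=\lfloor(\log x)^{1/5}\rfloor$ and $b=7$ both pieces fit inside $O\bigl(x/(\log x)^{3/4}\bigr)$. You need to supply an argument of this second kind (or an equivalent device) to control the large-$q$ tail; otherwise the proof is incomplete.
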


Indeed, our methods allow us to establish an asymptotic formula, for any fixed even number $q\ge4$, for the counting function of those integers~$n$ for which $\lambda_1(n)=q$.

\begin{definition} \label{Eq def}
Define $E_q(x) = \#\{n\le x\colon \lambda_1(n)=q\}$ to be the number of positive integers $n\le x$ such that the least invariant factor of $n$ equals~$q$.
\end{definition}

\begin{theorem} \label{Eq theorem}
Let $q\ge4$ be an even integer. For $x\ge e^e$,
\begin{equation*}
E_q(x) = \frac{H_qx}{(\log x)^{1-1/\phi(q)}} + O_q \bigg( \frac{x(\log\log x)^2}{(\log x)^{1-1/2\phi(q)}} \bigg).
\end{equation*}
Here the constant $H_q$ is defined by
\begin{equation} \label{Hq def}
H_q = \frac{3 G_q}2 \cdot \begin{cases}
1 + \dfrac1{p^r(p-1)}, &\text{if there exists an odd prime power $p^r\|q$ such that } \dfrac q{p^r} \mid (p-1), \\
1, &\text{otherwise,}
\end{cases}
\end{equation}
where
\begin{equation} \label{Gq def}
G_q = \frac1{\Gamma(1/\phi(q))} \bigg( \frac{\phi(q)}q \prod_{\substack{\chi\mod q \\ \chi \ne \chi_0}} L(1,\chi) \bigg)^{1/\phi(q)} \prod_{\substack{p \nmid q \\ p \not\equiv 1 \mod{q}}} \bigg(1-\frac{1}{p^{\ord_q(p)}} \bigg)^{1/\ord_q(p)}.
\end{equation}
\end{theorem}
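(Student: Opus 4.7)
The plan is to apply Proposition~\ref{lif to factorization prop} to convert the condition $\lambda_1(n) = q$ into an arithmetic constraint on the prime factorization of~$n$, and then to invoke the parameter-uniform Selberg--Delange theorem from the appendix.

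First I would use Proposition~\ref{lif to factorization prop} to characterize the integers $n$ with $\lambda_1(n) = q$. Since $v_2(q) \ge 1$, any cyclic factor $\Z/2\Z$ appearing inside $M_{2^a}$ (for $a \ge 2$) would pin $v_p(\lambda_1(n))$ at $0$ for every odd prime $p$; hence the $2$-part of $n$ must be $1$ or $2$, giving a universal prefactor of $1 + 1/2 = 3/2$ in the asymptotic. For the odd part of~$n$, Proposition~\ref{lif to factorization prop} forces every prime $\ell \mid n$ to satisfy $\ell \equiv 1 \pmod{q}$, with at most one permitted deviation: $n$ may contain a single ``exceptional'' prime power $p^k$ with $k \ge r + 1$ when $p^r \| q$ and $q/p^r \mid (p - 1)$. (In this case $M_{p^k}$ is cyclic of order $p^{k-1}(p-1)$, with $v_p \ge r$ and $v_{p'} \ge v_{p'}(q)$ for each $p' \mid q$ with $p' \ne p$.) A ``sharpness'' requirement additionally demands that for each prime $p \mid q$, at least one cyclic component of $M_n$ has $v_p$ exactly equal to $v_p(q)$.

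Second I would analyze the Dirichlet series
\[
F(s) = \prod_{\ell \equiv 1 \pmod q} (1 - \ell^{-s})^{-1}
\]
associated with the main count. Comparing $\log F(s)$ with $\tfrac{1}{\phi(q)} \sum_{\chi \bmod q} \log L(s, \chi)$ via character orthogonality modulo $q$ yields
\[
F(s) = \bigg( \prod_{\chi \bmod q} L(s, \chi) \bigg)^{1/\phi(q)} \prod_{\substack{\ell \nmid q \\ \ell \not\equiv 1 \pmod q}} \big( 1 - \ell^{-\ord_q(\ell) s} \big)^{1/\ord_q(\ell)}.
\]
Since $L(s, \chi_0) \sim \phi(q)/(q(s-1))$ as $s \to 1^+$, one reads off $F(s) = A (s - 1)^{-1/\phi(q)}(1 + o(1))$ near $s = 1$, with $A = G_q \cdot \Gamma(1/\phi(q))$ matching precisely the constant in~\eqref{Gq def}. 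Applying the appendix's quantitative Selberg--Delange theorem to $F(s)$ then gives the main asymptotic for integers supported on primes $\equiv 1 \pmod q$; multiplying by $3/2$ from the $2$-part, and by $1 + \sum_{k \ge r+1} p^{-k} = 1 + 1/(p^r(p-1))$ from the exceptional prime power when it exists (by $1$ otherwise), reproduces $H_q$.

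The principal obstacle is handling the sharpness requirement. By inclusion--exclusion over the primes $p \mid q$, one subtracts the count of $n$ in which every prime $\ell \mid n$ satisfies $v_p(\ell - 1) \ge v_p(q) + 1$---equivalently $\ell \equiv 1 \pmod{pq}$. Such primes form a class of density $1/(p\phi(q))$ among all primes, so Selberg--Delange gives a subtractive correction with denominator $(\log x)^{1 - 1/(p\phi(q))}$, smaller than the main term by a factor $(\log x)^{-(p-1)/(p\phi(q))}$. For $p = 2$ (which always divides $q$ since $q$ is even) this factor equals $(\log x)^{-1/(2\phi(q))}$, which saturates the error term stated in Theorem~\ref{Eq theorem}; corrections for $p > 2$ are strictly smaller and absorbed. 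It is precisely this saturation---where a secondary main term approaches the scale of the primary error---that forces uniform tracking of implicit constants in the parameter $1/\phi(q)$, and hence necessitates the explicit Selberg--Delange theorem from the appendix.
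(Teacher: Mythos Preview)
Your main-term analysis is essentially the paper's: you correctly identify the Dirichlet series $F(s)$, its behaviour near $s=1$, and the factors $3/2$ and $1+1/p^r(p-1)$ that convert the Selberg--Delange output into $H_q$. The genuine gap is in your treatment of the sharpness condition.

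You propose inclusion--exclusion only over primes $p\mid q$, asserting that the oversubtracted sets are those $n$ whose prime factors all lie in the class $1\pmod{pq}$. But $\lambda_1(n)=q$ also requires that $\lambda_1(n)$ not be divisible by any prime $p\nmid q$. Concretely, for $q=4$, an integer $n$ all of whose odd prime factors are $\equiv1\pmod{12}$ satisfies $4\mid\lambda_1(n)$ and $3\mid\lambda_1(n)$, hence $\lambda_1(n)\ne4$; yet $3\nmid 4$, so your inclusion--exclusion would not remove it. In general, whenever $p\nmid q$ one has $\phi(pq)=(p-1)\phi(q)$, so for $p=3$ (with $3\nmid q$) the subtracted count $D_{3q}(x)$ is of order $x/(\log x)^{1-1/2\phi(q)}$, exactly the same scale as your $p=2$ term---so these omissions are not lower order.

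Extending the inclusion--exclusion to all primes (equivalently, to all integers $m$ via $E_q=\sum_m\mu(m)D_{mq}$, which is what the paper does) creates a second problem you have not addressed: the sum is now infinite, and your uniform Selberg--Delange estimate for $D_{mq}(x)$ is only valid for $mq\le(\log x)^{1/3}$. The paper resolves this by restricting to integers with $\omega(n)>b$ (the functions $D_{mq}^*$ and $E_q^*$), so that Brun--Titchmarsh plus the pigeonhole of many prime factors $\equiv1\pmod{mq}$ forces a saving of $m^{-(b-2)}$ uniformly in $m$ (Lemma~\ref{Dmq* lemma}); the discrepancy between starred and unstarred counts is then absorbed by the crude bound on integers with few prime factors. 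Some device of this kind is needed, and your proposal does not supply one.
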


\noindent In equation~\eqref{Gq def} and hereafter, we use $\ord_q(p)$ to denote the order of $p$ modulo~$q$. We show, in the proof of Proposition~\ref{Hq prop} below, that the odd prime power $p^r$ (including the possibility $r=1$) referred to in equation~\eqref{Hq def} is unique if it exists.

For example, if $\chi_{-4}$ and $\chi_{-6}$ denote the nonprincipal characters modulo~$4$ and~$6$, respectively, then $H_4 = \frac32 G_4$ where
\[
G_4 = \frac1{\Gamma(1/2)} \bigg( \frac24 L(1,\chi_{-4}) \bigg)^{1/2} \prod_{p\equiv3\mod4} \bigg( 1-\frac1{p^2} \bigg)^{1/2} = \frac1{\sqrt\pi} \bigg( \frac\pi8 \bigg)^{1/2} \prod_{p\equiv3\mod4} \bigg( 1-\frac1{p^2} \bigg)^{1/2},
\]
while $H_6 = \frac32 G_6\cdot\frac76$ where
\[
G_6 = \frac1{\Gamma(1/2)} \bigg( \frac26 L(1,\chi_{-6}) \bigg)^{1/2} \prod_{p\equiv5\mod6} \bigg( 1-\frac1{p^2} \bigg)^{1/2} = \frac1{\sqrt\pi} \bigg( \frac\pi{6\sqrt3} \bigg)^{1/2} \prod_{p\equiv5\mod6} \bigg( 1-\frac1{p^2} \bigg)^{1/2}.
\]
Both evaluations match the expressions given in equation~\eqref{H4 H6 def}, which is not coincidental: the contribution to the main term for $T(x)$ in Theorem~\ref{T theorem} is actually from those integers~$n$ for which $\lambda_1(n)=4$ or $\lambda_1(n)=6$. Indeed, Theorem~\ref{Eq theorem} shows that any individual $q\ge8$ leads to a counting function $E_q(x)$ with a smaller order of magnitude than either $E_4(x)$ or $E_6(x)$. We cannot directly deduce Theorem~\ref{T theorem} from Theorem~\ref{Eq theorem}, however, because of uniformity issues with the asymptotic formula in the latter theorem.

In fact, uniformity issues present the main technical obstacle to proving Theorem~\ref{Eq theorem}, for a different reason:
instead of those integers~$n$ for which the smallest invariant factor is exactly~$q$,
it is more approachable to first count those integers~$n$ for which the smallest invariant factor is a multiple of~$q$.

\begin{definition} \label{Dq def}
Define $D_q(x) = \#\{n\le x\colon q\mid\lambda_1(n)\}$ to be the number of positive integers $n\le x$ such that $q$ divides the least invariant factor of~$n$.
\end{definition}

\noindent As it turns out, the asymptotic formula for $D_q(x)$ will be the same as the asymptotic formula in Theorem~\ref{Eq theorem} for $E_q(x)$ (see Proposition~\ref{Hq prop} below). We can then recover the functions $E_q(x)$ from the functions $D_q(x)$ via inclusion--exclusion; however, this recovery step requires some quantitative statement of uniformity in~$q$.

We will see in Proposition~\ref{lif to factorization prop} that, roughly speaking, an integer is counted by $D_q(x)$ when all of its prime factors are congruent to 1\mod q. Counting integers of this form is a standard goal in analytic number theory, going all the way back to Landau, and can be effectively done using the Selberg--Delange method. An excellent account of the Selberg--Delange method appears in the book of Tenenbaum~\cite[Chapter~II.5]{tenenbaum15}. However, that version allows implicit constants to depend on three parameters~$\delta$, $c_0$, and~$A$; for our purposes, we require a version where the dependence upon the parameters~$\delta$ and~$c_0$ is explicit. Consequently, we establish such a uniform version of the Selberg--Delange method (closely following Tenenbaum's proof for the most part) in Theorem~\ref{thm_SD}; this additional uniformity might be of value in other research as well, so we have retained the generality already present.

The techniques that establish the above results could also be used to investigate the other canonical representation of the abelian group~$M_n$, namely its primary decomposition as a direct sum of cyclic groups of prime-power order. For example, one can see that the least such primary factor of~$M_n$ equals~$2$ unless $4\nmid n$ and every odd prime dividing~$n$ is congruent to $1\mod4$. Therefore the counting function of those integers whose least primary factor does not equal $2$ is precisely $D_4(x)$ (see Proposition~\ref{lif to factorization prop} below), which is asymptotic to $E_4(x)$ and thus to $H_4x/\sqrt{\log x}$ where $H_4$ was defined in equation~\eqref{H4 H6 def}. The methods of this paper would provide an asymptotic formula for the counting function of integers with any prescribed least primary factor, after carefully characterizing such integers $n$ in terms of their prime factorizations; but we do not carry out the details of that variant herein.

After the preliminary Section~\ref{complex section}, we give in Section~\ref{arbitrary residues sec} an asymptotic formula for the counting function of integers whose prime factors are restricted to lie in an arbitrary fixed set of reduced residue classes (see Theorem~\ref{general B thm}), with explicit dependence on the modulus and the number of residue classes; we hope that such a theorem might find further use elsewhere. We specialize in Section~\ref{special 1} to the case of counting integers all of whose prime factors are congruent to 1\mod q; asymptotic formulas for this function can be found in the literature, but not with the uniformity we require in this application. Section~\ref{algebra to number theory sec} contains the algebra and elementary number theory necessary to give the precise relationship between this counting function and $D_q(x)$, the asymptotic formula for which is provided in Proposition~\ref{Hq prop}. From this result, we proceed to establish Theorem~\ref{Eq theorem} in Section~\ref{second thm sec} and finally Theorem~\ref{T theorem} in Section~\ref{main thm sec}. The appendix is devoted to establishing the uniform version of the Selberg--Delange method described above.

\section{Classical complex analytic results} \label{complex section}

We will employ the classical ``zero-free regions'' for Dirichlet $L$-functions, with a slight wrinkle related to the fact that we need such a region to be free of even a possible exceptional zero. Using the notational convention that $s=\sigma+i\tau$, we first state the standard result (see~\cite[Corollary~11.8 and Theorem~11.4]{MV}, although note that the $\tau$ therein is $|\tau|+4$ in our notation):

\begin{lemma} \label{nonexceptional bound}
There exists an effective constant $c>0$ such that for all $q\ge3$ and all Dirichlet characters $\chi\mod q$, the function $L(s,\chi)$ has no zeros in the region
$\sigma \ge 1-c/\log(q(|\tau|+4))$,
except possibly for a single exceptional real zero of a single character $\chi\mod q$. In the smaller region
\begin{equation} \label{preliminary region}
\sigma \ge 1-\frac{c/2}{\log(q(|\tau|+4))},
\end{equation}
we have
\begin{equation} \label{nonexceptional ineq}
|\log L(s,\chi)| \le \log\log(q(|\tau|+4)) + O(1)
\end{equation}
for every nonexceptional character $\chi\mod q$.
\end{lemma}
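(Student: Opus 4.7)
The first assertion about the zero-free region is precisely the content of Corollary~11.8 and Theorem~11.4 of Montgomery--Vaughan~\cite{MV} as cited, so the substantive task is to establish the modulus bound~\eqref{nonexceptional ineq} in the inner region~\eqref{preliminary region}. For a nonexceptional character $\chi\mod q$, the function $L(s,\chi)$ is analytic and nonvanishing throughout the larger region $\sigma\ge 1 - c/\log(q(|\tau|+4))$, so $\log L(s,\chi)$ is a well-defined analytic function there, with the branch fixed by $\log L(s,\chi)\to 0$ as $\sigma\to\infty$.

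The plan is to combine three classical ingredients. First, at the anchor point $s_0=1+1/\log(q(|\tau|+4))+i\tau$, the Dirichlet series
\[
\log L(s_0,\chi)=\sum_{p}\sum_{k\ge1}\frac{\chi(p^k)}{k\, p^{ks_0}}
\]
converges absolutely, and splitting the sum at $y=\log(q(|\tau|+4))$ and invoking Mertens' estimate $\sum_{p\le y}p^{-1}=\log\log y + O(1)$ yields the anchor bound $|\log L(s_0,\chi)|\le \log\log(q(|\tau|+4))+O(1)$. Second, I would establish the derivative bound $|(L'/L)(s,\chi)|\ll \log(q(|\tau|+4))$ throughout the inner region~\eqref{preliminary region}, by applying the Borel--Carathéodory theorem to $\log L(s,\chi)$ on a disk of radius $\asymp 1/\log(q(|\tau|+4))$ centered at such an $s$. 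The real-part side input $\operatorname{Re}\log L=\log|L|\le\log\log(q(|\tau|+4))+O(1)$ comes from the classical upper bound $|L(s,\chi)|\ll\log(q(|\tau|+4))$ (obtained via partial summation from the Pólya--Vinogradov character sum estimate), valid throughout the larger zero-free region; Cauchy's integral formula for $L'/L=(\log L)'$ then converts the bound on $\log L$ inside the disk into the claimed bound on its derivative.

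Third, for any $s=\sigma+i\tau$ in the inner region~\eqref{preliminary region}, one integrates along the horizontal segment from $s$ to $s_0$:
\[
\log L(s,\chi)=\log L(s_0,\chi)-\int_{\sigma}^{1+1/\log(q(|\tau|+4))}\frac{L'}{L}(u+i\tau,\chi)\,du.
\]
The path has length $O(1/\log(q(|\tau|+4)))$ and stays inside the inner region, where the integrand is $O(\log(q(|\tau|+4)))$ by the second ingredient, so the integral contributes $O(1)$. Combined with the anchor bound, this gives~\eqref{nonexceptional ineq}.

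The main technical point is organizing the constants in the three nested regions so that Borel--Carathéodory's inherent shrinkage from radius $R$ to radius $r<R$ still leaves a positive margin: specifically, the disks used in ingredient two must remain inside the larger zero-free region uniformly in $\tau$. This is arranged by passing from the exponent $c$ in the larger region to, say, $3c/4$ for the $|L'/L|$ estimate and then to $c/2$ for the target region~\eqref{preliminary region}, and by observing that $\log(q(|\tau'|+4))\asymp\log(q(|\tau|+4))$ for all $\tau'$ with $|\tau'-\tau|\le 1$, so that the $\tau$-dependence of the zero-free boundary is harmless on the disks in question. The argument otherwise closely follows the standard treatments in Tenenbaum~\cite{tenenbaum15} and Davenport, and no new difficulty arises.
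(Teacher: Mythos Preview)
The paper does not actually prove this lemma: it is stated as a classical fact with a reference to Montgomery--Vaughan~\cite[Corollary~11.8 and Theorem~11.4]{MV}, and no argument is given in the paper itself. Your proposal is therefore not competing with a proof in the paper but rather sketching the textbook argument that lies behind that citation, and in broad outline it is the right one (anchor bound at $s_0=1+1/\log(q(|\tau|+4))+i\tau$ via the Dirichlet series, an $L'/L$ bound from a Borel--Carath\'eodory/Cauchy step, then horizontal integration back to~$s$).

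There is, however, a small but genuine circularity in your second ingredient as written. You propose to apply Borel--Carath\'eodory to $\log L$ on a disk \emph{centered at the target point $s$ in the inner region}. Borel--Carath\'eodory requires as input the value $|\log L|$ at the center, and that is precisely the quantity you are trying to bound; the upper bound on $\Re\log L=\log|L|$ alone does not suffice. The standard fix---and what Montgomery--Vaughan actually do---is to center the disk at the anchor point $s_0$, where $|\log L(s_0,\chi)|$ is already controlled by your first ingredient. With the disk of radius $\asymp c/\log(q(|\tau|+4))$ centered at $s_0$ lying inside the larger zero-free region, Borel--Carath\'eodory (followed, if desired, by Cauchy for the derivative) gives both $|\log L|\le\log\log(q(|\tau|+4))+O(1)$ and $L'/L\ll\log(q(|\tau|+4))$ on a slightly smaller disk that still covers the inner region at height~$\tau$. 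In particular, once the center is placed at~$s_0$, your third step (integrating $L'/L$ from $s$ to $s_0$) becomes redundant: the Borel--Carath\'eodory conclusion already delivers~\eqref{nonexceptional ineq} directly.
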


On the other hand, we have sufficient control over exceptional zeros and nearby values of $L$-functions (see~\cite[Corollary~11.12 and equations~(11.9) and~(11.10)]{MV} and note that $|\log L(s,\chi)| \le \big| \log |L(s,\chi)| \big| + |\arg L(s,\chi)|$):

\begin{lemma} \label{exceptional bound}
There exists an effective constant $c>0$ such that any exceptional zero~$\beta$ for $L(s,\chi)$ with $\chi\mod q$ is at most $1-c/q^{1/2}\log^2q$. In the region~\eqref{preliminary region}, if $|s-\beta| > 1/\log q$ then $L(s,\chi)$ satisfies the bound~\eqref{nonexceptional ineq}; on the other hand, if $|s-\beta| \le 1/\log q$ then
\[
|\log L(s,\chi)| \le \log|s-\beta|^{-1} + \log\log q+O(1).
\]
\end{lemma}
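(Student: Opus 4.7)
The plan is to derive all three assertions from the Hadamard factorization of $L(s,\chi)$ together with standard partial-fraction estimates for $L'/L$, as carried out in~\cite[Chapter~11]{MV}. For the bound $\beta \le 1 - c/q^{1/2}\log^2 q$ on the exceptional zero itself, I would follow the classical proof of Page's theorem: combining the effective lower bound $L(1,\chi) \gg q^{-1/2}/\log q$ (from the Dirichlet class number formula for the associated quadratic field, after absorbing the imprimitivity factor and using the trivial estimate $h \ge 1$ on the class number) with the standard derivative estimate $|L'(\sigma,\chi)| \ll \log^2 q$ for real $\sigma$ near~$1$, the mean value theorem applied to the segment $[\beta,1]$ gives $L(1,\chi) \le (1-\beta)|L'(\sigma^*,\chi)|$ for some $\sigma^* \in (\beta,1)$, whence the desired inequality (up to adjustment of the constant~$c$).

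For the two bounds on $|\log L(s,\chi)|$, I would reduce to Lemma~\ref{nonexceptional bound} by factoring out the exceptional zero. Setting $M(s,\chi) := L(s,\chi)/(s-\beta)$, the function $M$ is holomorphic and nonvanishing throughout the region~\eqref{preliminary region}, so the Hadamard-factorization argument behind Lemma~\ref{nonexceptional bound}---partial-fraction decomposition of $M'/M$ followed by contour integration from a line well to the right of $\sigma=1$---applies verbatim with $M$ in place of $L$, yielding
\[
|\log M(s,\chi)| \le \log\log(q(|\tau|+4)) + O(1).
\]
Using the identity $\log L(s,\chi) = \log(s-\beta) + \log M(s,\chi)$ and the elementary bound $|\log(s-\beta)| \le \log|s-\beta|^{-1} + \pi$ (absorbing both modulus and argument contributions as in the remark preceding the statement), both claims follow: when $|s-\beta| > 1/\log q$ the singular term is dominated by $\log\log q$ and is absorbed into the first bound; when $|s-\beta| \le 1/\log q$ the singular term $\log|s-\beta|^{-1}$ must be retained, producing the second.

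The chief technical nuisance is verifying the Hadamard-style estimate for $M$: one must check that subtracting the single pole $1/(s-\beta)$ from the partial-fraction expansion of $L'/L$ produces a function (namely $M'/M$) satisfying the same $O(\log(q(|\tau|+4)))$ bound that $L'/L$ would obey in the absence of any exceptional zero, with sufficient uniformity to integrate back to the $\log\log$ estimate on $\log M$. This deexceptionalization is classical (compare~\cite[Corollary~11.12 and eqs.~(11.9)--(11.10)]{MV}), but it is where the bookkeeping of implicit constants is most delicate, particularly in ensuring that no additional $(1-\beta)^{-1}$ factors creep in from the proximity of $\beta$ to the integration contour.
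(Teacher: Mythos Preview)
The paper does not supply a proof of this lemma; it is quoted directly from~\cite{MV} (Corollary~11.12 for the location of~$\beta$, and equations~(11.9)--(11.10) for the size of $\log L$, combined via the trivial observation $|\log L| \le \big|\log|L|\big| + |\arg L|$). Your sketch is therefore a reconstruction of the argument underlying those citations, and the overall strategy---Page's theorem via the class-number lower bound plus the mean value theorem, then factoring out $(s-\beta)$ and running the nonexceptional argument on the quotient---is indeed the standard one.

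One point needs correction: your claim that $|\log M(s,\chi)| \le \log\log(q(|\tau|+4)) + O(1)$ throughout the region~\eqref{preliminary region} is false as stated. Since~$\beta$ is real, for $|\tau|$ large one has $|s-\beta| \asymp |\tau|$, so already at the starting point~$s_0$ of your contour integration the quantity $|\log M(s_0)| = |\log L(s_0) - \log(s_0-\beta)|$ contains a term of size $\log|\tau|$, not $\log\log|\tau|$. The clean fix is to split on~$|\tau|$: for $|\tau| \ge 1$ the zero~$\beta$ is at distance $\ge 1$ from~$s$, the term $1/(s-\beta)$ in the partial-fraction expansion of $L'/L$ is $O(1)$, and the nonexceptional argument of Lemma~\ref{nonexceptional bound} applies directly to~$L$ itself; only for $|\tau| < 1$ does one actually need the $M$-factorization, and in that range your argument goes through since both $|s-\beta|$ and $|s_0-\beta|$ lie between $c/\log q$ and $O(1)$.

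A smaller quibble: your lower bound $L(1,\chi) \gg q^{-1/2}/\log q$, fed through the mean value theorem with $|L'| \ll \log^2 q$, yields only $1-\beta \gg q^{-1/2}/\log^3 q$. Dropping the superfluous $1/\log q$---the class-number formula gives $L(1,\chi^*) \gg q^{-1/2}$ for the primitive inducing character, and the imprimitivity factor costs only $\phi(q)/q \gg 1/\log\log q$---recovers the stated exponent~$\log^2 q$.
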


We combine these two lemmas into the following easy-to-cite proposition:

\begin{proposition} \label{zerofree prop}
There exists an effective constant $0<\eta<\frac1{81}$ such that for all $q\ge3$ and all Dirichlet characters $\chi\mod q$, the function $L(s,\chi)$ has no zeros in the region
\begin{equation} \label{region of a.c.}
\sigma \ge 1 - \frac{c_0}{1+\log^+ |\tau|} \quad\text{with } c_0 = \frac\eta{q^{1/2}\log^2q}
\end{equation}
and satisfies the bound
\begin{equation} \label{combined bound}
|\log L(s,\chi)| \le \begin{cases}
\log\log^+(q\tau) + O(1), &\text{if $\chi$ has no exceptional zero}, \\
\tfrac12\log q + 3\log\log^+(q\tau) + O(1), &\text{if $\chi$ has an exceptional zero $\beta$}
\end{cases}
\end{equation}
inside this region.
\end{proposition}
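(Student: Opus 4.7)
My strategy is to piece together Lemmas \ref{nonexceptional bound} and \ref{exceptional bound} by choosing $\eta$ small enough that the region \eqref{region of a.c.} lies inside the smaller classical regions from those lemmas, and small enough that any possible exceptional zero falls strictly to the left of \eqref{region of a.c.}. The explicit bound $\eta<1/81$ in the statement is presumably imposed for quantitative purposes arising later in the Selberg--Delange machinery rather than anything intrinsic here, so I may take $\eta$ as small as needed beyond that bound.

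For the zero-free region, Lemma \ref{nonexceptional bound} says the only possible zero in $\sigma \ge 1 - c/\log(q(|\tau|+4))$ is an exceptional real zero of a single character $\chi\mod q$. I would first check that, for $\eta$ sufficiently small (depending only on the constant~$c$), one has
\[
\frac{c_0}{1+\log^+|\tau|} = \frac{\eta}{q^{1/2}\log^2 q\,(1+\log^+|\tau|)} \le \frac{c/2}{\log(q(|\tau|+4))}
\]
uniformly for $q\ge 3$ and all $\tau$; this is routine after splitting into the ranges $|\tau|\le e$ and $|\tau|>e$ and using $q^{1/2}\log^2 q \ge \sqrt3\,\log^2 3$ throughout. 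Hence \eqref{region of a.c.} lies inside \eqref{preliminary region}. For the exceptional zero $\beta$, Lemma \ref{exceptional bound} gives $\beta \le 1 - c/q^{1/2}\log^2 q$, whereas $c_0/(1+\log^+|\tau|) \le \eta/q^{1/2}\log^2 q < c/q^{1/2}\log^2 q$ as soon as $\eta<c$, so $\beta$ also lies strictly to the left of \eqref{region of a.c.}.

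For the bound on $|\log L(s,\chi)|$, in the nonexceptional case \eqref{nonexceptional ineq} applies throughout \eqref{region of a.c.}, giving the first case of \eqref{combined bound} immediately (reading $\log\log^+(q\tau)$ as $\log\log(q(|\tau|+4))$). If $\chi$ has an exceptional zero $\beta$, I split on $|s-\beta|$ exactly as in Lemma \ref{exceptional bound}. When $|s-\beta|>1/\log q$, the bound \eqref{nonexceptional ineq} applies and is weaker than the target. When $|s-\beta|\le 1/\log q$, the estimate
\[
|s-\beta| \ge \sigma - \beta \ge \frac{c}{q^{1/2}\log^2 q} - \frac{c_0}{1+\log^+|\tau|} \ge \frac{c-\eta}{q^{1/2}\log^2 q}
\]
yields $\log|s-\beta|^{-1} \le \tfrac12\log q + 2\log\log q + O(1)$; combining this with the additional $\log\log q + O(1)$ from Lemma \ref{exceptional bound} produces the second case of \eqref{combined bound}.

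The argument is essentially a careful bookkeeping exercise, and no step is genuinely hard. The one mild obstacle is the calibration in the first step: picking a single $\eta$ that simultaneously lies below $1/81$, below $c$, and is small enough to absorb \eqref{region of a.c.} into \eqref{preliminary region} uniformly in~$q$ and~$\tau$. Once $\eta$ is fixed, the remaining estimates for $|\log L(s,\chi)|$ follow directly by specialising the two lemmas to this single region.
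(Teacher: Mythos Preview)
Your proposal is correct and follows essentially the same approach as the paper. The paper's own proof is extremely terse (``needs no comment other than the remark that we take $\eta<c/2$''), simply noting that with $\eta<c/2$ one has $|s-\beta|\ge \eta/(q^{1/2}\log^2 q)$ and hence $\log|s-\beta|^{-1}\le\tfrac12\log q+2\log\log q+O(1)$; you spell out the containment of regions and the case split more explicitly, but the underlying argument is identical.
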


\begin{proof}
The derivation needs no comment other than the remark that we take $\eta<c/2$ in these lemmas, so that in particular
\[
\log|s-\beta|^{-1} \le \log \frac{q^{1/2}\log^2q}\eta = \tfrac12\log q + 2\log\log q + O(1).
\qedhere
\]
\end{proof}

We next take a moment to establish a convenient upper bound for complex logarithms and exponential functions defined by continuous variation on a domain. For fixed $z\in\C$, our convention will be to choose the branch of $w^z$ (on regions to be specified) so that $1^z=1$. Often we will set $w$ equal to some Dirichlet series~$F(s)$ with constant term~$1$, so that we choose the branch of $F(s)^z$ that tends to~$1$ as $\Re s\to\infty$. It would be easy to write down a false inequality: it is not the case, for example, that $|w^z| \le |w|^{|z|}$, as the example $i^{-i} = e^{\pi/2}$ shows (or even the example $0.1^{-2}$). But we can use a similar-looking inequality:

\begin{lemma} \label{w^z lemma}
Let $w\ne0$ and $z$ be complex numbers, and fix any value of $\log w$.
If $|\log w| \le L$, then $|w^z| \le e^{L|z|}$.
\end{lemma}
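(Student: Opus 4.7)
The plan is to unwind the definition of $w^z$ under a fixed branch of the logarithm and then apply the submultiplicativity of moduli together with the trivial bound $\Re(\zeta)\le|\zeta|$. Specifically, once a value of $\log w$ has been chosen, the principal definition used throughout the paper is
\begin{equation*}
w^z = \exp(z\log w).
\end{equation*}
Taking moduli gives $|w^z| = \exp\bigl(\Re(z\log w)\bigr)$, and since the exponential function is monotone increasing on the real line, it suffices to bound the real part of the exponent.

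The key step is the chain of inequalities
\begin{equation*}
\Re(z\log w) \le |z\log w| = |z|\cdot|\log w| \le L|z|,
\end{equation*}
where the first inequality is the trivial bound $\Re\zeta\le|\zeta|$ valid for every complex number~$\zeta$, the middle equality is multiplicativity of modulus, and the final inequality is the hypothesis $|\log w|\le L$. Substituting this into the displayed formula for $|w^z|$ yields $|w^z|\le e^{L|z|}$, as desired.

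I do not anticipate any obstacle: the lemma is essentially a reminder that one must work with the chosen logarithm rather than with $|w|$, and the cautionary examples given in the paragraph preceding the lemma (such as $i^{-i}=e^{\pi/2}$ or $0.1^{-2}$) are precisely the situations where the naive bound $|w^z|\le|w|^{|z|}$ fails but the corrected bound $|w^z|\le e^{L|z|}$ still holds, because in those examples $|\log w|$ is larger than $\log|w|$. The only subtlety worth flagging in the write-up is that the choice of branch of $\log w$ is already baked into the statement, so that nothing needs to be said about continuous variation in the proof itself; the continuous-variation setup is relevant for the applications of the lemma (e.g.\ to $L(s,\chi)^z$) but not for its one-line verification.
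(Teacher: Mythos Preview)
Your proof is correct and is essentially identical to the paper's own one-line argument, which likewise writes $|w^z|=|e^{z\log w}|=e^{\Re(z\log w)}\le e^{|z\log w|}\le e^{|z|L}$. The additional commentary you give about the branch choice and the cautionary examples is accurate but not needed for the proof itself.
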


\begin{proof}
We have $|w^z| = |e^{z\log w}| = e^{\Re(z\log w)} \le e^{|z\log w|} \le e^{|z|L}$.
\end{proof}

\section{Integers with prime factors from prescribed residue classes} \label{arbitrary residues sec}

The following notation will be used throughout this section.

\begin{notation} \label{B beta notation}
Let $q\ge3$ be an integer. Let $\B$ be the union of $B$ distinct reduced residue classes\mod q. Set $\beta=B/\phi(q)$ and $\underline B = \min\{B,\phi(q)-B\}$. Let $\cN_\B = \{n\in\N\colon p\mid n\Rightarrow p\in\B\}$ be the set of all positive integers whose prime factors lie exclusively in~$\B$, and define the associated Dirichlet series
\[
F_\B(s) = \sum_{n\in\cN_\B} \frac1{n^s} = \prod_{p\in\B} \bigg( 1-\frac1{p^s} \bigg)^{-1},
\]
which converges absolutely when $\sigma>1$. Finally, let $\zeta(s)^{-\beta}$ denote the branch of the complex exponential chosen, as discussed earlier, so that $\lim_{\Re s\to\infty} \zeta(s)^{-\beta}=1$, and set $G_\B(s) = F_\B(s) \zeta(s)^{-\beta}$, which {\em a priori} is defined only when $\sigma>1$.
\end{notation}

\begin{lemma} \label{property 1 lemma}
The Dirichlet series $G_\B(s)$ has an analytic continuation to the region~\eqref{region of a.c.}.
\end{lemma}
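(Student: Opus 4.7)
The strategy is the standard Selberg--Delange decomposition: express $F_\B(s)$ as a product of Dirichlet $L$-functions raised to character-weighted exponents, times a correction factor analytic in a larger region. The singularity of $F_\B(s)$ at $s=1$ then sits entirely inside $L(s,\chi_0)^\beta = \zeta(s)^\beta\prod_{p\mid q}(1-p^{-s})^\beta$, so multiplication by $\zeta(s)^{-\beta}$ in the definition of $G_\B$ removes it.

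Concretely, I would start from character orthogonality: set $c_\chi = \phi(q)^{-1}\sum_{a\in\B}\bar\chi(a)$, so that $c_{\chi_0}=\beta$ and $\sum_{\chi\bmod q} c_\chi \chi(p)^k = \mathbf{1}\{p^k\bmod q \in \B\}$ whenever $\gcd(p,q)=1$. Comparing the Dirichlet series $\log F_\B(s)=\sum_{p\in\B}\sum_{k\ge1}1/(kp^{ks})$ with
\[
\sum_{\chi\bmod q} c_\chi \log L(s,\chi) = \sum_p \sum_{k\ge1} \frac{\mathbf{1}\{p^k\bmod q\in\B\}}{kp^{ks}}
\]
term-by-term, the $k=1$ contributions coincide exactly, while the $k\ge2$ terms differ by at most $1/(kp^{ks})$ in absolute value. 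This yields
\[
\log F_\B(s) = \sum_{\chi\bmod q} c_\chi \log L(s,\chi) + \log H_0(s),
\]
where $\log H_0(s)$ is a Dirichlet series supported on prime powers $p^k$ with $k\ge2$; its absolute convergence for $\sigma>\tfrac12$ makes $H_0(s)$ analytic in that half-plane.

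Peeling off the principal character via $L(s,\chi_0) = \zeta(s)\prod_{p\mid q}(1-p^{-s})$ and subtracting $\beta\log\zeta(s)$ from both sides, I would obtain
\[
G_\B(s) = H_0(s) \cdot \prod_{p\mid q}(1-p^{-s})^\beta \cdot \prod_{\chi\ne\chi_0} L(s,\chi)^{c_\chi}.
\]
Each factor on the right extends analytically to the region~\eqref{region of a.c.}: $H_0(s)$ is analytic for $\sigma>\tfrac12$; the finite Euler-like product is entire on $\sigma>0$; and for every nonprincipal $\chi\bmod q$, Proposition~\ref{zerofree prop} shows that $L(s,\chi)$ is nonvanishing on~\eqref{region of a.c.}, since the choice $\eta<c/2$ in its proof pushes any potential exceptional zero out of this region. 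Because~\eqref{region of a.c.} is simply connected, $\log L(s,\chi)$ — and hence the complex power $L(s,\chi)^{c_\chi}$ using the branch with $\lim_{\Re s\to\infty}L(s,\chi)^{c_\chi}=1$ — continues analytically throughout it.

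I expect the main obstacle to be not any analytic difficulty but rather bookkeeping: one must verify carefully that the $k=1$ contributions cancel exactly in the character-sum manipulation, and that the various complex powers are assembled with mutually consistent branches, so that the identity derived on $\sigma>1$ at the level of Dirichlet-series logarithms truly transfers, via analytic continuation, to all of~\eqref{region of a.c.}.
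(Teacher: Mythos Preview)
Your argument is correct and follows essentially the same Selberg--Delange decomposition as the paper. The only cosmetic difference is that the paper raises everything to the $\phi(q)$th power, working with $G_\B(s)^{\phi(q)}$ and the correction factor $A_\B(s)=H_0(s)^{\phi(q)}$ (so that the exponents on the $L$-functions become $\sum_{b\in\B}\bar\chi(b)$ rather than your $c_\chi$), whereas you work with $G_\B$ directly; the paper's choice pays off slightly in the next lemma, where $A_\B(s)$ is an explicit Euler product easier to bound, but for the analytic continuation itself the two presentations are equivalent.
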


\noindent Those familiar with the Selberg--Delange method, or those who refer to Definition~\ref{property P def}, might recognize that we are beginning to establish that $F(s)$ has the property $\P(\beta;c_0,\frac12,M)$, with $\beta$ as in Notation~\ref{B beta notation} and with $c_0$ as in equation~\eqref{region of a.c.}.

\begin{proof}
For $\sigma>1$, define
\begin{equation} \label{AB def}
A_\B(s) = F_\B(s)^{\phi(q)} \prod_{\chi\mod q} L(s,\chi)^{-\sum_{b\in\B} \overline\chi(b)}.
\end{equation}
For any prime~$p$, note that
\begin{align*}
\log \prod_{\chi\mod q} \bigg( 1-\frac{\chi(p)}{p^s} \bigg)^{\sum_{b\in\B} \overline\chi(b)} &= \sum_{\chi\mod q} \sum_{b\in\B} \overline\chi(b) \log \bigg( 1-\frac{\chi(p)}{p^s} \bigg) \\
&= \sum_{b\in\B} \sum_{\chi\mod q} \overline\chi(b) \bigg( {-}\frac{\chi(p)}{p^s} + O\bigg( \frac1{p^{2\sigma}} \bigg) \bigg) \\
&= -\frac1{p^s} \begin{cases} \phi(q), &\text{if } b\in\B, \\ 0, &\text{if } b\notin\B \end{cases} \Bigg\} + O\bigg( \frac{\phi(q)^2}{p^{2\sigma}} \bigg)
\end{align*}
by orthogonality, whence
\begin{align}
A_\B(s) &= \prod_{p\in\B} \bigg( 1-\frac1{p^s} \bigg)^{-\phi(q)} \prod_p \Bigg( 1 -\frac1{p^s} \begin{cases} \phi(q), &\text{if } b\in\B, \\ 0, &\text{if } b\notin\B \end{cases} \Bigg\} + O\bigg( \frac{\phi(q)^2}{p^{2\sigma}} \bigg) \Bigg) \notag \\
&= \prod_p \bigg( 1 + O\bigg( \frac{\phi(q)^2}{p^{2\sigma}} \bigg) \bigg). \label{gonna be crude}
\end{align}
In particular, this infinite product has an analytic continuation to $\sigma>\frac12$, in which it converges absolutely and therefore does not vanish. Then
\begin{align}
G_\B(s)^{\phi(q)} = \zeta(s)^{-\beta\phi(q)} F_\B(s)^{\phi(q)} &= \zeta(s)^{-B} A_\B(s) \prod_{\chi\mod q} L(s,\chi)^{\sum_{b\in\B} \overline\chi(b)} \notag \\
&= A_\B(s) \prod_{p\mid q} (1-p^{-s})^B \prod_{\substack{\chi\mod q \\ \chi\ne\chi_0}} L(s,\chi)^{\sum_{b\in\B} \overline\chi(b)} \label{G is A times Ls}
\end{align}
can be analytically continued to the zero-free region~\eqref{combined bound} of these Dirichlet $L$-functions, which implies that $G_\B(s)$ itself can be analytically continued to this region.
\end{proof}

The other requirement of $F(s)$ having the property $\P(\beta;c_0,\frac12,M)$ is an upper bound for the related function $G_\B(s)$ inside the zero-free region~\eqref{combined bound}, which the following lemma furnishes.

\begin{lemma} \label{property 2 lemma}
There exists an absolute constant $C>0$ such that, in the region~\eqref{region of a.c.}, we have the bound $|G_\B(s)| \le M(1+|\tau|)^{1/2}$ with $M \ll q^{1/2} \log^2 q \cdot (C\underline B)^{\underline B}$.
\end{lemma}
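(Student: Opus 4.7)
The plan is to start from the factorization~\eqref{G is A times Ls}, take $\phi(q)$-th roots, and bound each of the three resulting factors separately in the region~\eqref{region of a.c.}. Writing $T_\chi = \sum_{b\in\B}\overline\chi(b)$, we have
\[
|G_\B(s)| = |A_\B(s)|^{1/\phi(q)} \prod_{p\mid q}|1-p^{-s}|^{B/\phi(q)} \prod_{\chi\ne\chi_0}|L(s,\chi)^{T_\chi}|^{1/\phi(q)}.
\]

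The first step is to sharpen the estimate~\eqref{gonna be crude}, which by itself would only give $|A_\B(s)| \le \exp(O(\phi(q)^2))$. Expanding $\log A_\B(s)$ as a Dirichlet series using the definition~\eqref{AB def} and noting that the $k=1$ coefficients cancel by orthogonality yields
\[
\log A_\B(s) = \phi(q)\sum_{p\nmid q}\sum_{k\ge 2}\frac{\mathbf 1[p\in\B]-\mathbf 1[p^k\in\B\bmod q]}{kp^{ks}},
\]
which is $O(\phi(q))$ in absolute value since $\sigma > 1-c_0 > \tfrac12$ throughout our region. Hence $|A_\B(s)|^{1/\phi(q)}$ is bounded by an absolute constant, and the middle factor is trivially at most $2^{\omega(q)} = q^{o(1)}$.

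The third factor is the main contribution. By Lemma~\ref{w^z lemma} it suffices to bound $\frac{1}{\phi(q)}\sum_{\chi\ne\chi_0}|T_\chi|\,|\log L(s,\chi)|$. I would handle the possibly-exceptional character $\chi^*$ separately: since $\underline B \le \phi(q)/2$ we have $|T_{\chi^*}|/\phi(q)\le 1/2$, and together with the second case of~\eqref{combined bound} this contribution exponentiates to at most $q^{1/4}(\log^+(q\tau))^{3/2}\,e^{O(1)}$. For the nonexceptional characters, Parseval's identity gives $\sum_{\chi\ne\chi_0}|T_\chi|^2 = B(\phi(q)-B) \le \phi(q)\underline B$, so Cauchy--Schwarz yields $\sum_{\chi\ne\chi_0,\chi^*}|T_\chi| \le \phi(q)\sqrt{\underline B}$; combined with the first case of~\eqref{combined bound} they contribute at most $(\log^+(q\tau))^{\sqrt{\underline B}}\,e^{O(\sqrt{\underline B})}$. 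Assembling the three factors,
\[
|G_\B(s)| \ll q^{1/4}(\log^+(q\tau))^{\sqrt{\underline B}+3/2}\,e^{O(\sqrt{\underline B})}.
\]

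To finish I would compare this against $M(1+|\tau|)^{1/2}$ with $M = q^{1/2}\log^2 q\,(C\underline B)^{\underline B}$ by a short case split. When $|\tau|$ is large the polylogarithmic-in-$|\tau|$ factor is swallowed by $(1+|\tau|)^{1/2}$; when $|\tau|$ is bounded the residual expression $(\log q)^{\sqrt{\underline B}+3/2}e^{O(\sqrt{\underline B})}$ is dominated by $q^{1/4}\log^2 q\,(C\underline B)^{\underline B}$ for an absolute constant $C$ chosen large enough, a direct comparison of logarithms that splits according to whether $\sqrt{\underline B}\log\log q$ is absorbed by $\tfrac14\log q$ or by $\underline B\log(C\underline B)$. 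The main obstacle is the Cauchy--Schwarz step for nonexceptional characters: the crucial input is the bound $B(\phi(q)-B)\le\phi(q)\underline B$ from Parseval, which replaces the pointwise estimate $|T_\chi|\le\underline B$ by the much better $\ell^1$ estimate $\sum|T_\chi|\le\phi(q)\sqrt{\underline B}$ and produces a $\sqrt{\underline B}$ (rather than $\underline B$) in the exponent of $\log^+(q\tau)$; without this saving the claimed bound for $M$ cannot be attained.
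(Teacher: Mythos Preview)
Your argument is correct and proves the lemma, but it proceeds quite differently from the paper's proof. Two points of comparison:

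\textbf{The $A_\B$ bound.} Your observation that the $k=1$ coefficients of $\log A_\B(s)$ cancel by orthogonality gives $|\log A_\B(s)| \ll \phi(q)$, so $|A_\B(s)|^{1/\phi(q)} \ll 1$. This is sharper and cleaner than the paper's crude estimate, which splits the product at $p \approx q$ and obtains $\log|A_\B(s)| \ll q$, hence $|A_\B(s)|^{1/\phi(q)} \ll e^{q/\phi(q)} \ll \log^2 q$. Your version is preferable here.

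\textbf{The $L$-function product.} The paper uses only the pointwise bound $|T_\chi| \le \underline B$ for every nonprincipal $\chi$, obtaining $|G_\B(s)| \ll \log^2 q \cdot (C_2\log^+(q|\tau|))^{\underline B(1+3/\phi(q))}$, and then a one-line optimization: the maximum of $(\log y)^{\underline B}/y^{1/2}$ over $y \ge 1$ occurs at $y=e^{2\underline B}$ and equals $(2\underline B/e)^{\underline B}$, which immediately gives the factor $(C\underline B)^{\underline B}$ together with $q^{1/2}$ from writing $y = q(1+|\tau|)$. Your Parseval/Cauchy--Schwarz step replaces the exponent $\underline B$ by $\sqrt{\underline B}$ in the $\log^+(q\tau)$ factor and therefore proves a genuinely stronger intermediate bound, from which the lemma certainly follows after your case split. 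So your route is more elaborate but yields more; the paper's route is shorter and aims only at the stated $M$.

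Your final sentence, however, is mistaken: the Cauchy--Schwarz saving is \emph{not} needed to reach the claimed $M$. The pointwise estimate $|T_\chi|\le\underline B$ already gives $(\log^+(q|\tau|))^{\underline B}$, and the elementary optimization above converts this directly into $q^{1/2}(C\underline B)^{\underline B}(1+|\tau|)^{1/2}$. What Cauchy--Schwarz buys you is a bound of the shape $q^{3/4+o(1)}(C\sqrt{\underline B})^{\sqrt{\underline B}}(1+|\tau|)^{1/2}$, which is strictly smaller but not required for the lemma as stated.
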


\begin{proof}
The logarithm of the expression~\eqref{gonna be crude} can be bounded by
\begin{align*}
\log |A_\B(s)| &= \sum_{p \le q^{1/\sigma}} \log \bigg( 1 + O\bigg( \frac{\phi(q)^2}{p^{2\sigma}} \bigg) \bigg) + \sum_{p > q^{1/\sigma}} \log\bigg( 1 + O\bigg( \frac{\phi(q)^2}{p^{2\sigma}} \bigg) \bigg) \\
&\ll \sum_{p \le q^{1/\sigma}} \log \phi(q)^2 + \sum_{p > q^{1/\sigma}} \frac{\phi(q)^2}{p^{2\sigma}} \ll \pi(q^{1/\sigma}) \log q + \frac{q^2}{( q^{1/\sigma})^{2\sigma-1}} \ll q^{1/\sigma}
\end{align*}
uniformly for $\sigma\ge\frac34$ say. In particular, when $\sigma \ge 1-1/(3\log q)$, we obtain $\log |A_\B(s)| \ll q$.

If $\chi\mod q$ is not an exceptional character, then by the first case of equation~\eqref{combined bound} and Lemma~\ref{w^z lemma},
\begin{align*}
\log \big| L(s,\chi)^{\sum_{b\in\B} \overline\chi(b)} \big| \le \big( \log\log^+q|\tau| + O(1) \big) \bigg| \sum_{b\in\B} \overline\chi(b) \bigg| \le \underline B \big( \log\log^+q|\tau| + O(1) \big).
\end{align*}
(Recall that $\underline B=\min\{B,\phi(q)-B\}$; the bound $|\sum_{b\in\B} \overline\chi(b)| \le B$ is obvious, while the bound $|\sum_{b\in\B} \overline\chi(b)| \le \phi(q)-B$ follows, since $\chi\ne\chi_0$, from the consequence
\[
\sum_{b\in\B} \overline\chi(b) = -\sum_{b\in(\Z/q\Z)^\times\setminus \B} \overline\chi(b)
\]
of orthogonality.)
Similarly, if $\chi\mod q$ is an exceptional character, then by the second case of equation~\eqref{combined bound} and Lemma~\ref{w^z lemma},
\begin{align*}
\log \big| L(s,\chi)^{\sum_{b\in\B} \overline\chi(b)} \big| \le \underline B \big( \tfrac12\log q + 3 \log\log^+q|\tau| + O(1) \big).
\end{align*}
Since there can be at most one exceptional character, the product in equation~\eqref{G is A times Ls} satisfies the bound
\begin{align*}
\log\bigg| \prod_{\substack{\chi\mod q \\ \chi\ne\chi_0}} L(s,\chi)^{\sum_{b\in\B} \overline\chi(b)} \bigg| \le \underline B \Big( \phi(q) \big( \log\log^+q|\tau| + O(1) \big) + \big( \tfrac12\log q + 3 \log\log^+q|\tau| + O(1) \big) \Big),
\end{align*}
and equation~\eqref{G is A times Ls} becomes, for $\sigma>1-1/(3\log q)$,
\begin{align} \label{gonna exponentiate}
\phi(q) \log |G_\B(s)| \le q + \underline B \big( (\phi(q)+3) \big( \log\log^+q|\tau| + O(1) \big) + \tfrac12\log q \big).
\end{align}

We know that $q/\phi(q) < 2\log\log q$ when~$q$ is sufficiently large, and thus $e^{q/\phi(q)} \ll \log^2q$. Therefore, dividing both sides of equation~\eqref{gonna exponentiate} by $\phi(q)$ and exponentiating yields, for some absolute constants~$C_1$ and~$C_2$,
\begin{align}
|G_\B(s)| &\ll e^{q/\phi(q)} \big( (C_1\log^+q|\tau|)^{(1+3/\phi(q))} e^{(\log q)/2\phi(q)} \big)^{\underline B} \notag \\
&\ll \log^2 q \cdot \big( C_2 (\log^+q|\tau|)^{(1+3/\phi(q))} \big)^{\underline B}. \label{e/q}
\end{align}
A routine calculus exercise shows that the quotient of the right-hand side with $|\tau|^{1/2}$, for $|\tau|\ge e/q$, is maximized at $|\tau| = e^{2\underline B(1+3/\phi(q))}/q$, from which we obtain
\[
|G_\B(s)| \ll \log^2 q \cdot \Big( C_2 \big( \tfrac2e (1+\tfrac3{\phi(q)})\underline B \big)^{1+3/\phi(q)} \Big)^{\underline B} q^{1/2} |\tau|^{1/2} \ll q^{1/2} \log^2 q \cdot (C\underline B)^{\underline B} (1+|\tau|)^{1/2}
\]
for some absolute constant~$C$. On the other hand, when $|\tau|\le e/q$, the right-hand side of equation~\eqref{e/q} simply equals $(C_2)^{\underline B}\log^2q$, which justifies this final estimate for those values of~$\tau$ as~well.
\end{proof}

We remark in passing that all of the constants in these proofs are effectively computable. It might seem that we could obtain a better upper bound ineffectively for~$M$ in this proposition by using Siegel's theorem on exceptional zeros in Lemma~\ref{exceptional bound} and Proposition~\ref{zerofree prop}; this is a mirage, however, as the expression $q^{1/2}\log^2q$ in the estimate for~$M$ actually arises by accident from completely different sources.

We are now able to count the integers in $\cN_\B$ using the Selberg--Delange method, at least when~$\B$ is small enough. It is crucial for our application that we can produce an asymptotic formula for this counting function that has some uniformity in~$q$.

\begin{theorem} \label{general B thm}
There exist positive absolute constants~$\alpha$, $\eta$, and~$C$ such that, uniformly for $\log x \ge \alpha q^{1/2}\log^2 q$,
\begin{multline*}
\#\{ n\le x\colon n\in\cN_\B\} \\
= \frac x{(\log x)^{1-\beta}} \bigg( \frac{G_\B(1)}{\Gamma(\beta)} + O\Big( (C\underline B)^{\underline B} q^2(\log q)^8 \big( e^{-\eta\sqrt{(\log x)/q^{1/2}\log^2 q}} + (\log x)^{-1} \big) \Big) \bigg),
\end{multline*}
where $G_\B(s) = \zeta(s)^{-\beta} \prod_{p\in\B} \big( 1-p^{-s} \big)^{-1}$ has an analytic continuation to a neighborhood of $s=1$.

In particular, uniformly for $q \le (\log x)^{1/3}$,
\begin{align} \label{uniform B formula}
\#\{ n\le x\colon n\in\cN_\B\} &= \frac x{(\log x)^{1-\beta}} \bigg( \frac{G_\B(1)}{\Gamma(\beta)} + O\big( (C\underline B)^{\underline B} (\log x)^{-1/4} \big) \bigg).
\end{align}
\end{theorem}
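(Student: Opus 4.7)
The plan is to apply the uniform version of the Selberg--Delange method supplied by Theorem~\ref{thm_SD} directly to the Dirichlet series $F_\B(s)$. The two preceding lemmas were arranged precisely to verify its hypotheses: Lemma~\ref{property 1 lemma} shows that $G_\B(s) = F_\B(s)\zeta(s)^{-\beta}$ extends analytically to the region~\eqref{region of a.c.}, and in that same region Lemma~\ref{property 2 lemma} furnishes the bound $|G_\B(s)| \le M(1+|\tau|)^{1/2}$ with $M \ll q^{1/2}\log^2 q \cdot (C\underline B)^{\underline B}$. Together these establish that $F_\B$ has the property $\P(\beta;c_0,\frac12,M)$ with $c_0 = \eta/(q^{1/2}\log^2 q)$, which is all that Theorem~\ref{thm_SD} requires; translating notation is mostly a matter of matching the definition in the appendix.

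Having done so, one invokes Theorem~\ref{thm_SD}. It produces the main term $G_\B(1) x/(\Gamma(\beta)(\log x)^{1-\beta})$ together with a Selberg--Delange error whose general shape is $M \cdot c_0^{-O(1)} \cdot x(\log x)^{\beta-1}\bigl(e^{-c\sqrt{c_0\log x}}+(\log x)^{-1}\bigr)$. The hypothesis $\log x \ge \alpha q^{1/2}\log^2 q$ (for a suitable absolute $\alpha$) is exactly what is needed so that the quantity $c_0\log x$ exceeds the absolute threshold demanded by the contour manipulations of the appendix. Substituting $c_0 = \eta/(q^{1/2}\log^2 q)$ and multiplying through, the $M$-factor contributes $q^{1/2}\log^2 q$, and the $c_0^{-O(1)}$ factor contributes a further polynomial in $q^{1/2}\log^2 q$; once the exponents emerging from Theorem~\ref{thm_SD} are tracked, the product collapses to $q^2(\log q)^8(C\underline B)^{\underline B}$, matching the stated error. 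The only real obstacle is this bookkeeping of exponents of~$q$, $\log q$, and~$c_0$ in the appendix version of the Selberg--Delange error.

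For the particular case $q \le (\log x)^{1/3}$, the hypothesis $\log x \ge \alpha q^{1/2}\log^2 q$ is automatic for $x$ large enough, since under that restriction $q^{1/2}\log^2 q \ll (\log x)^{1/6}(\log\log x)^2$. Consequently the ratio $(\log x)/(q^{1/2}\log^2 q) \gg (\log x)^{5/6}/(\log\log x)^2$ tends to infinity, so the factor $e^{-\eta\sqrt{(\log x)/q^{1/2}\log^2 q}}$ decays faster than any power of $\log x$ and is swallowed by the other piece. That other piece, $q^2(\log q)^8 (\log x)^{-1}$, is in turn bounded by $(\log\log x)^8 (\log x)^{-1/3}$, which is eventually dominated by $(\log x)^{-1/4}$; factoring out $(C\underline B)^{\underline B}$ yields equation~\eqref{uniform B formula}.
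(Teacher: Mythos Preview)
Your approach is essentially identical to the paper's: both apply the Selberg--Delange apparatus of the appendix to $F_\B$, using Lemmas~\ref{property 1 lemma} and~\ref{property 2 lemma} to verify the hypotheses, and then substitute $c_0=\eta/(q^{1/2}\log^2 q)$ and $M\ll q^{1/2}\log^2 q\cdot(C\underline B)^{\underline B}$ so that $M c_0^{-3}\ll q^2(\log q)^8(C\underline B)^{\underline B}$. The paper invokes Corollary~\ref{SD corollary} rather than Theorem~\ref{thm_SD} directly; note that Theorem~\ref{thm_SD} as stated requires type~$\T$, not merely property~$\P$, but here the nonnegativity of the coefficients of $F_\B$ makes the two equivalent (see Definition~\ref{type T def}), so your phrasing ``which is all that Theorem~\ref{thm_SD} requires'' should be read with that caveat.
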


\begin{proof}
The result is a direct application of Corollary~\ref{SD corollary} with $F(s) = F_\B(s)$, which by Lemmas~\ref{property 1 lemma} and~\ref{property 2 lemma} has the property $\P(\beta;c_0,\frac12,M)$ with the parameters $c_0=\eta/q^{1/2}\log^2q$ and $M \ll q^{1/2} \log^2 q \cdot (C\underline B)^{\underline B}$.
\end{proof}

\begin{remark} \label{B B' remark}
If $\B$ and $\B'$ partition the reduced residue classes\mod q, then clearly $\beta+\beta'=1$ and
\begin{align*}
G_\B(s) G_{\B'}(s) &= \zeta(s)^{-\beta} \prod_{p\in\B} \bigg( 1-\frac1{p^s} \bigg)^{-1} \zeta(s)^{-\beta'} \prod_{p\in\B'} \bigg( 1-\frac1{p^s} \bigg)^{-1} \\
&= \zeta(s)^{-1} \prod_{p\nmid q} \bigg( 1-\frac1{p^s} \bigg)^{-1} = \prod_{p\mid q} \bigg( 1-\frac1{p^s} \bigg);
\end{align*}
in particular, $G_\B(1) G_{\B'}(1) = \phi(q)/q$. Note also that $\underline B = \underline B'$ in this situation. Consequently, one can derive the asymptotic formula for $\#\{ n\le x\colon n\in\cN_{\B'}\}$ directly from the asymptotic formula for $\#\{ n\le x\colon n\in\cN_\B\}$ from Theorem~\ref{general B thm}, with the same error term.
\end{remark}

With an eye towards future applications, we exhibit one last variant of the above results, where the set~$\B$ of primes (defined as in Notation~\ref{B beta notation}) in modified by the removal of finitely many primes and the insertion of finitely many other primes. To fix notation, let $\cR=\{b_1,\dots,b_k\}$ be a finite set of primes in~$\B$, and let $\cI = \{p_1,\dots,p_j\}$ be a finite set of primes that are not in~$\B$. We will establish an asymptotic formula for the counting function of $\cN_{\cI\cup\B\setminus\cR} = \{n\in\N\colon p\mid n\Rightarrow p\in\cI\cup\B\setminus\cR\}$, the set of integers all of whose prime factors lie in $\cI\cup\B\setminus\cR$.
To this set of integers, define the associated Dirichlet series
\begin{equation} \label{I and B not R products}
F_{\cI\cup\B\setminus\cR}(s) = \sum_{n\in\cN_\B} \frac1{n^s} = \prod_{p\in\cI\cup\B\setminus\cR} \bigg( 1-\frac1{p^s} \bigg)^{-1} = F_\B(s) \prod_{i=1}^j \bigg( 1-\frac1{p_i^s} \bigg)^{-1} \prod_{i=1}^k \bigg( 1-\frac1{b_k^s} \bigg).
\end{equation}
As before we set $G_{\cI\cup\B\setminus\cR}(s) = F_{\cI\cup\B\setminus\cR}(s) \zeta(s)^{-\beta}$.

Each of the finitely many factors on the right-hand side of equation~\eqref{I and B not R products} is analytic for $\Re s>0$, and so Lemma~\ref{property 1 lemma}
implies that $G_{\cI\cup\B\setminus\cR}(s)$ has an analytic continuation to the region~\eqref{region of a.c.}. It also easily follows from Lemma~\ref{property 2 lemma} that there exists an absolute constant $C>0$ such that, in the region~\eqref{region of a.c.}, we have the bound $|G_{\cI\cup\B\setminus\cR}(s)| \le M(1+|\tau|)^{1/2}$ with
\begin{equation} \label{IBR error products}
M \ll q^{1/2} \log^2 q \cdot (C\underline B)^{\underline B} \prod_{i=1}^j \bigg( 1-\frac1{p_i^{\sigma(q)}} \bigg)^{-1} \prod_{i=1}^k \bigg( 1 + \frac1{b_k^{\sigma(q)}} \bigg),
\end{equation}
where
\begin{equation*}
\sigma(q)=1-1/(81q^{1/2}\log^2 q) \ge 0.99.
\end{equation*}
It follows that each factor in the products on the right-hand side of equation~\eqref{IBR error products} is at most $2$ (with a single exception if some $p_i$ equals $2$, in which case the corresponding factor is at most~$3$).
The proof of Theorem~\ref{general B thm} then yields the variant:

\begin{theorem} \label{general IBR thm}
There exist positive absolute constants~$\alpha$, $\eta$, and~$C$ such that, uniformly for $\log x \ge \alpha q^{1/2}\log^2 q$,
\begin{multline*}
\#\{ n\le x\colon n\in\cN_{\cI\cup\B\setminus\cR}\} = \frac x{(\log x)^{1-\beta}} \bigg\{ \frac{G_\B(1)}{\Gamma(\beta)} \prod_{i=1}^j \bigg( 1-\frac1{p_i} \bigg)^{-1} \prod_{i=1}^k \bigg( 1-\frac1{b_k} \bigg) \\
+ O\Big( (C\underline B)^{\underline B} 2^{j+k} q^2(\log q)^8 \big( e^{-\eta\sqrt{(\log x)/q^{1/2}\log^2 q}} + (\log x)^{-1} \big) \Big) \bigg\}.
\end{multline*}
\end{theorem}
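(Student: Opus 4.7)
The plan is to adapt the proof of Theorem~\ref{general B thm} by replacing $F_\B(s)$ with $F_{\cI\cup\B\setminus\cR}(s)$; most of the work has been carried out in the discussion preceding the statement, so the main remaining task is to invoke the uniform Selberg--Delange result (Corollary~\ref{SD corollary}) and then to identify the leading constant explicitly.

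First I confirm the bound $M \ll q^{1/2}\log^2 q \cdot (C\underline B)^{\underline B} \cdot 2^{j+k}$ extracted from equation~\eqref{IBR error products}. Within the region~\eqref{region of a.c.} we have $\sigma \ge \sigma(q) \ge 0.99$, so each inserted factor $(1-p_i^{-s})^{-1}$ is bounded in modulus by $(1-p_i^{-0.99})^{-1}\le 2$, with the single possible exception $p_i=2$ whose factor is at most $3$; similarly each removed factor satisfies $|1-b_k^{-s}| \le 1 + b_k^{-0.99} \le 2$. Multiplying these correction terms contributes at most $3\cdot 2^{j+k-1} = O(2^{j+k})$ to the constant $M$ supplied by Lemma~\ref{property 2 lemma}. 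Together with the analytic continuation provided just before the statement, this confirms that $F_{\cI\cup\B\setminus\cR}$ has property $\P(\beta;c_0,\tfrac12,M)$ with the stated parameters.

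Applying Corollary~\ref{SD corollary} exactly as in the proof of Theorem~\ref{general B thm} then yields an asymptotic formula with main term $G_{\cI\cup\B\setminus\cR}(1)\,x/\Gamma(\beta)(\log x)^{1-\beta}$ and with the error term inflated by the extra factor $2^{j+k}$ inherited from $M$. To finish I compute the leading constant: multiplying equation~\eqref{I and B not R products} by $\zeta(s)^{-\beta}$ produces the identity
\[
G_{\cI\cup\B\setminus\cR}(s) = G_\B(s) \prod_{i=1}^j \bigg( 1-\frac1{p_i^s} \bigg)^{-1} \prod_{i=1}^k \bigg( 1-\frac1{b_k^s} \bigg)
\]
of analytic functions on a neighborhood of $s=1$; evaluating at $s=1$ gives $G_{\cI\cup\B\setminus\cR}(1) = G_\B(1)\prod_{i=1}^j(1-1/p_i)^{-1}\prod_{i=1}^k(1-1/b_k)$, matching the main term in the theorem statement. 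No step presents a substantial obstacle; the argument is essentially bookkeeping grafted onto the proof of Theorem~\ref{general B thm}, the only care needed being to keep the factor $2^{j+k}$ visible in the error term and to note that the finite-product correction is holomorphic and nonvanishing throughout the region~\eqref{region of a.c.}, so that the property $\P$ framework is not disrupted.
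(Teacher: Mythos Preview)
Your proposal is correct and follows essentially the same route as the paper: the paper's entire proof is the one-line remark ``The proof of Theorem~\ref{general B thm} then yields the variant,'' relying on the preceding discussion (equations~\eqref{I and B not R products}--\eqref{IBR error products} and the observation that each extra Euler factor contributes at most~$2$, or~$3$ when $p_i=2$) to verify property $\P(\beta;c_0,\tfrac12,M)$ with the modified~$M$. Your write-up simply spells out these steps and the evaluation of $G_{\cI\cup\B\setminus\cR}(1)$ explicitly, which the paper leaves implicit.
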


\section{Integers with all prime factors $1\mod q$, uniformly in~$q$} \label{special 1}

We now apply the previous theorem to the classical counting problem for integers all of whose prime factors are congruent to $1\mod q$.
Define
\[
J_q(x) = \#\{ n\le x\colon p\mid n\implies p\equiv1\mod q\}
\]

\begin{proposition} \label{B=1 prop}
For any positive even integer~$q$, uniformly for $q \le (\log x)^{1/3}$,
\[
J_q(x) = \frac{G_q x}{(\log x)^{1-1/\phi(q)}} + O\bigg( \frac x{(\log x)^{5/4-1/\phi(q)}} \bigg),
\]
where $G_q$ is defined in equation~\eqref{Gq def}.
\end{proposition}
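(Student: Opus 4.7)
The plan is to recognize $J_q(x)$ as the special case of $\#\{n\le x\colon n\in\cN_\B\}$ in which $\B$ consists of the single residue class $1\bmod q$, so that $B=1$, $\beta=1/\phi(q)$, and $\underline B = \min\{1,\phi(q)-1\}=1$ (for $q\ge 3$; the case $q=2$ is trivial since $J_2(x) = \lfloor(x+1)/2\rfloor$). The uniform estimate~\eqref{uniform B formula} of Theorem~\ref{general B thm} then yields, for $q\le(\log x)^{1/3}$,
\[
J_q(x) = \frac{x}{(\log x)^{1-1/\phi(q)}}\bigg(\frac{G_\B(1)}{\Gamma(1/\phi(q))} + O\bigl((\log x)^{-1/4}\bigr)\bigg),
\]
with an absolute implicit constant, and multiplying through produces exactly the stated error $O(x/(\log x)^{5/4-1/\phi(q)})$. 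All that remains is the algebraic identity $G_\B(1) = \Gamma(1/\phi(q))\,G_q$.

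For this I would specialize equation~\eqref{G is A times Ls} to $\B=\{1\bmod q\}$. Since $\sum_{b\in\B}\overline\chi(b) = \overline\chi(1) = 1$ for every character $\chi\mod q$, that equation becomes
\[
G_\B(s)^{\phi(q)} = A_\B(s)\prod_{p\mid q}(1-p^{-s})\prod_{\substack{\chi\mod q\\ \chi\ne\chi_0}}L(s,\chi),
\]
whose middle product evaluates to $\phi(q)/q$ at $s=1$. To simplify $A_\B(1)$ from~\eqref{AB def} I would invoke the standard Euler-product identity
\[
\prod_{\chi\mod q}L(s,\chi) = \prod_{p\nmid q}\bigg(1-\frac{1}{p^{\ord_q(p)s}}\bigg)^{-\phi(q)/\ord_q(p)},
\]
which follows from the fact that, as $\chi$ ranges over characters mod~$q$, the values $\chi(p)$ traverse the $\ord_q(p)$-th roots of unity with multiplicity $\phi(q)/\ord_q(p)$. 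Substituting into~\eqref{AB def} and cancelling the $\ord_q(p)=1$ portion against the factor $\prod_{p\equiv 1\mod q}(1-p^{-s})^{-\phi(q)}$ coming from $F_\B(s)^{\phi(q)}$ leaves
\[
A_\B(1) = \prod_{\substack{p\nmid q\\ p\not\equiv 1\mod q}}\bigg(1-\frac{1}{p^{\ord_q(p)}}\bigg)^{\phi(q)/\ord_q(p)}.
\]
Taking the positive real $\phi(q)$-th root of the resulting product expression for $G_\B(1)^{\phi(q)}$ then reproduces $\Gamma(1/\phi(q))\,G_q$ as written in~\eqref{Gq def}.

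The only point I expect to need care is the branch of the $\phi(q)$-th root. Both $G_\B(1)$ (via Notation~\ref{B beta notation}) and the factor $\bigl(\frac{\phi(q)}{q}\prod_{\chi\ne\chi_0}L(1,\chi)\bigr)^{1/\phi(q)}$ appearing in~\eqref{Gq def} use the branch normalized to~$1$ at $\sigma=\infty$; along the real axis from $+\infty$ down to $1$ the relevant Dirichlet series remain real and positive (after pairing complex-conjugate characters so that $\prod_{\chi\ne\chi_0}L(1,\chi)>0$), so both reduce to the positive real $\phi(q)$-th root at $s=1$ and equality of $\phi(q)$-th powers suffices. The infinite product over $p\not\equiv 1\mod q$ converges absolutely because $\ord_q(p)\ge 2$ throughout that range, so no further analytic issues arise. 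Once Theorem~\ref{general B thm} is in hand, the proposition is purely a matter of bookkeeping the constant.
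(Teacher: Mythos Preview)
Your proposal is correct and follows essentially the same route as the paper: apply Theorem~\ref{general B thm} with $\B=\{1\bmod q\}$ to get the asymptotic with leading constant $G_\B(1)/\Gamma(1/\phi(q))$, then use equation~\eqref{G is A times Ls} and the character--Euler-product identity to evaluate $A_\B(1)$ and thereby identify $G_\B(1)/\Gamma(1/\phi(q))$ with $G_q$. The only cosmetic differences are that the paper computes the product $\prod_\chi(1-\chi(p)p^{-s})$ locally at each prime rather than citing the global identity for $\prod_\chi L(s,\chi)$, and that you add explicit remarks on the $q=2$ edge case and on the choice of branch for the $\phi(q)$-th root---both reasonable clarifications that the paper leaves implicit.
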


\begin{proof}
We invoke the last assertion of Theorem~\ref{general B thm} with $\B$ equal to the set of integers congruent to $1\mod q$, so that $\beta=1/\phi(q)$ and $\underline B=B=1$; for this proof we abuse notation and write simply $\B=\{1\}$. Our only task is to evaluate the leading constant $G_{\{1\}}(1)/\Gamma(1/\phi(q))$. By equation~\eqref{G is A times Ls}, we see that
\begin{equation} \label{expanding G1}
G_{\{1\}}(s)^{\phi(q)} = A_{\{1\}}(s) \prod_{p\mid q} (1-p^{-s})^1 \prod_{\substack{\chi\mod q \\ \chi\ne\chi_0}} L(s,\chi)^{\overline\chi(1)} = A_{\{1\}}(s) \prod_{p\mid q} (1-p^{-s}) \prod_{\substack{\chi\mod q \\ \chi\ne\chi_0}} L(s,\chi),
\end{equation}
where by equation~\eqref{AB def},
\begin{align*}
A_{\{1\}}(s) &= F_{\{1\}}(s)^{\phi(q)} \prod_{\chi\mod q} L(s,\chi)^{-\overline\chi(1)} \\
&= \prod_{p\equiv1\mod q} \bigg( 1-\frac1{p^s} \bigg)^{-\phi(q)} \prod_{\chi\mod q} \prod_p \bigg( 1-\frac{\chi(p)}{p^s} \bigg) \\
&= \prod_{\chi\mod q} \prod_{p\not\equiv1\mod q} \bigg( 1-\frac{\chi(p)}{p^s} \bigg).
\end{align*}
For each prime~$p$ not dividing~$q$ in the product, there are exactly $\phi(q)/\ord_q(p)$ characters that send~$p$ to any given $\ord_q(p)$th root of unity. Hence
\[
\prod_{\chi \mod{q}} \bigg( 1-\frac{\chi(p)}{p^s} \bigg) = \prod_{k=1}^{\ord_q(p)} \bigg( 1- \frac{e^{2\pi ik/\ord_q(p)}}{p^s} \bigg)^{\phi(q)/\ord_q(p)} = \bigg(1-\frac{1}{(p^s)^{\ord_q(p)}} \bigg)^{\phi(q)/\ord_q(p)},
\]
so that
\begin{equation} \label{A1}
A_{\{1\}}(s) = \prod_{\substack{p\nmid q \\ p\not\equiv1\mod q}} \bigg(1-\frac{1}{p^{s\ord_q(p)}} \bigg)^{\phi(q)/\ord_q(p)}
\end{equation}
Inserting this identity into equation~\eqref{expanding G1} and taking $s=1$ yields
\begin{align*}
G_{\{1\}}(1) &= \bigg\{ \prod_{\substack{p\nmid q \\ p\not\equiv1\mod q}} \bigg(1-\frac{1}{p^{\ord_q(p)}} \bigg)^{\phi(q)/\ord_q(p)} \prod_{p\mid q} (1-p^{-1}) \prod_{\substack{\chi\mod q \\ \chi\ne\chi_0}} L(1,\chi) \bigg\}^{1/\phi(q)} \\
&= \prod_{\substack{p\nmid q \\ p\not\equiv1\mod q}} \bigg(1-\frac{1}{p^{\ord_q(p)}} \bigg)^{1/\ord_q(p)} \bigg\{ \frac{\phi(q)}q \prod_{\substack{\chi\mod q \\ \chi\ne\chi_0}} L(1,\chi) \bigg\}^{1/\phi(q)};
\end{align*}
together with the factor $1/\Gamma(1/\phi(q))$ from equation~\eqref{uniform B formula}, this comprises the constant~$G_q$ defined in equation~\eqref{Gq def} as required.
\end{proof}

\begin{remark}
There are many equivalent ways to write the constant~$G_q$ defined in equation~\eqref{Gq def}; we note, for example, that the first parenthetical factor is actually equal to a residue of a Dedekind zeta-function, which can be evaluated using the analytic class number formula (see~\cite[Chapter~VII, Corollary~5.11]{Neukirch} for example) and the formula for the discriminant of a cyclotomic field (see~\cite[Proposition 2.7]{Washington} for example):
\begin{align*}
\frac{\phi(q)}q \prod_{\substack{\chi\mod q \\ \chi \ne \chi_0}} L(1,\chi) &= \mathop{\rm Res}_{s=1} \zeta_{\Q(e^{2\pi i/q})}(s) \\
&= \frac{(2\pi)^{\phi(q)/2} \gcd(q,2) \mathop{\rm Reg}_{\Q(e^{2\pi i/q})} h(\Q(e^{2\pi i/q})) \prod_{p\mid q} p^{\phi(q)/2(p-1)}}{2q^{\phi(q)/2+1}}
\end{align*}
where $\mathop{\rm Reg}(K)$ and $h(K)$ denote the regulator and class number of~$K$. The form we have chosen has the advantage of being easy to estimate, as the following proposition will show.

A very similar constant appears in the counting function of those integers~$n$ for which $\phi(n)$ is not divisible by a given prime~$q$ (see~\cite{SW,FLM}, and in particular note that our calculation~\eqref{A1} has a close counterpart in~\cite{FLM}). This is to be expected, as such integers are essentially those free of prime factors $p\equiv1\mod q$, which is the case $\B'=\{2,\dots,q-1\}$ of Theorem~\ref{general B thm}; and by Remark~\ref{B B' remark}, the leading constant in that asymptotic formula is closely related to the leading constant~\eqref{Gq def} for $\B=\{1\}$ (in the notation of the proof of Proposition~\ref{B=1 prop}).
\end{remark}

\begin{proposition} \label{Gq estimate prop}
The constant $G_q$ defined in equation~\eqref{Gq def} satisfies $\displaystyle G_q \ll \frac{\log q}{\phi(q)}$.
\end{proposition}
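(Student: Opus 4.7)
The plan is to bound each of the three factors in the definition~\eqref{Gq def} of $G_q$ separately and then combine. Write
\[
G_q = \underbrace{\frac{1}{\Gamma(1/\phi(q))}}_{=:I} \cdot \underbrace{\bigg(\frac{\phi(q)}{q} \prod_{\chi \ne \chi_0} L(1,\chi)\bigg)^{1/\phi(q)}}_{=:II} \cdot \underbrace{\prod_{\substack{p\nmid q \\ p\not\equiv 1\mod q}} \bigg(1 - \frac{1}{p^{\ord_q(p)}}\bigg)^{1/\ord_q(p)}}_{=:III}.
\]

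For the first factor $I$, I would use the fact that $\Gamma$ has a simple pole at $s=0$ with residue $1$, so $z\,\Gamma(z)\to 1$ as $z\to 0^+$. Applied with $z=1/\phi(q)$, this shows $\phi(q)/\Gamma(1/\phi(q))$ is bounded (and continuous for $\phi(q)\ge 1$), hence $I\ll 1/\phi(q)$. This factor is what produces the dominant $1/\phi(q)$ in the final estimate.

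For factor $II$, I would invoke the standard upper bound $|L(1,\chi)|\ll \log q$ for every nonprincipal character $\chi\mod q$ (a classical consequence of partial summation with Pólya--Vinogradov, see e.g.~\cite[Chapter~9]{MV}). Since there are $\phi(q)-1$ nonprincipal characters, and $\phi(q)/q\le 1$, we get
\[
\frac{\phi(q)}{q}\prod_{\chi\ne\chi_0} L(1,\chi) \ll (\log q)^{\phi(q)-1}
\]
(the product is real and positive: real nonprincipal characters contribute positive real numbers by the class number formula, and complex characters pair with their conjugates). Taking the $\phi(q)$-th root gives $II \ll (\log q)^{1-1/\phi(q)}\ll \log q$.

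For factor $III$, each term $(1-p^{-\ord_q(p)})^{1/\ord_q(p)}$ is at most $1$ since its base lies in $(0,1)$, so the whole product is at most $1$. Multiplying the three bounds yields $G_q\ll (1/\phi(q))\cdot \log q \cdot 1 = (\log q)/\phi(q)$, as required. There is no real obstacle here; the only mildly delicate point is justifying $|L(1,\chi)|\ll\log q$ uniformly in~$\chi$ and~$q$, which is entirely standard and can simply be cited.
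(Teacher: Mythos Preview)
Your proof is correct and follows essentially the same approach as the paper: bound each of the three factors separately, using $1/\Gamma(1/\phi(q))\ll 1/\phi(q)$ from the pole of $\Gamma$ at $0$, the standard bound $L(1,\chi)\ll\log q$ together with $\phi(q)/q\le 1$ for the middle factor, and the trivial bound $\le 1$ for the product over primes. The paper's argument is the same, just more terse.
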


\begin{proof}
The factor $\phi(q)/q$ is at most~$1$, as is the second product over primes. Since $\Gamma(s)$ has a pole at $s=0$, we have $1/\Gamma(s) \ll |s|$ for $|s|\le\frac12$, say, and in particular $1/\Gamma(1/\phi(q)) \ll 1/\phi(q)$. Finally, it is well known (see~\cite[Theorem~11.4]{MV} for example) that $L(1,\chi) \ll \log q$. Combining these estimates establishes the proposition.
\end{proof}

\section{Counting $n$ for which $q \mid \lambda_1(n)$} \label{algebra to number theory sec}

We return now to invariant factors, beginning with characterizations of the least invariant factor $\lambda_1(n)$ in terms of the prime factorization of~$n$.

\begin{lemma} \label{abelian group lemma}
Let $q$ and $m_1,\dots,m_\ell$ be positive integers.
Suppose that $G \cong \Z/m_1\Z \times \cdots \times \Z/m_\ell\Z$, where $\gcd(m_1,\dots,m_\ell)>1$. Then $q\mid \lambda_1(G)$ if and only if $q\mid m_j$ for each $1\le j\le\ell$.
\end{lemma}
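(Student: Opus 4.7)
The plan is to derive both directions of the equivalence simultaneously by computing $|G/qG|$ from two distinct decompositions of $G$ and comparing. The first order of business is to pin down the length of the invariant factor decomposition of $G$. The hypothesis $\gcd(m_1,\dots,m_\ell)>1$ supplies a prime $p$ dividing every $m_j$, and the given decomposition yields $G/pG \cong (\Z/p\Z)^\ell$, which is an $\mathbb{F}_p$-vector space of dimension $\ell$. Since the number of invariant factors of $G$ is at least $\dim_{\mathbb{F}_p}(G/pG)=\ell$ (any decomposition into $\ell'$ cyclic summands realises $G/pG$ as a quotient generated by $\ell'$ elements, forcing $\ell' \ge \ell$) and at most $\ell$ (from the given generating set of size $\ell$), the invariant factor decomposition $G \cong \Z/d_1\Z \times \cdots \times \Z/d_\ell\Z$ with $d_1 \mid d_2 \mid \cdots \mid d_\ell$ has length exactly $\ell$.

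Next I would apply the elementary identity $(\Z/m\Z)/q(\Z/m\Z) \cong \Z/\gcd(q,m)\Z$ factor-by-factor to both decompositions of $G$, yielding
\[
|G/qG| = \prod_{j=1}^{\ell} \gcd(q,m_j) = \prod_{j=1}^{\ell} \gcd(q,d_j).
\]
Every factor on either side is at most $q$, so each product is at most $q^\ell$, with equality if and only if every one of its factors equals $q$. The left product therefore attains $q^\ell$ exactly when $q \mid m_j$ for every $j$, while the right product attains $q^\ell$ exactly when $q \mid d_j$ for every $j$; by the divisibility chain $d_1 \mid d_2 \mid \cdots \mid d_\ell$, the latter condition collapses to the single requirement $q \mid d_1 = \lambda_1(G)$. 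Both sides of the desired biconditional are thus equivalent to the common numerical statement $|G/qG|=q^\ell$, and hence to each other.

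The main (and rather mild) obstacle is the bookkeeping in the first step: one must confirm that the invariant factor length is exactly $\ell$ to make the cardinality bound $q^\ell$ sharp on the right-hand product. This is precisely where the hypothesis $\gcd(m_1,\dots,m_\ell)>1$ is indispensable, as the case $\Z/2\Z \times \Z/3\Z \cong \Z/6\Z$ with $q=6$ already demonstrates that the conclusion can fail without it. Once the length is pinned down, the rest is a one-line computation of $|G/qG|$ in two ways.
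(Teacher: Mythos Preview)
Your proof is correct and takes a genuinely different route from the paper's. Both arguments begin by establishing that the invariant factor decomposition of $G$ has length exactly~$\ell$ (using a prime $p$ dividing $\gcd(m_1,\dots,m_\ell)$ to exhibit a copy of $(\Z/p\Z)^\ell$); the paper phrases the upper bound via $p$-Sylow lengths, while you use the minimal-generating-set formulation, but these are essentially the same observation.

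The real divergence is in the equivalence step. The paper first reduces to the case where $q$ is a prime power, then reformulates each side as the existence of a subgroup isomorphic to $(\Z/q\Z)^\ell$, so that the two conditions coincide once the length is known to be~$\ell$. Your argument instead computes $|G/qG|$ from the two decompositions, obtaining $\prod_j \gcd(q,m_j)=\prod_j \gcd(q,d_j)$, and observes that either product equals its maximum value $q^\ell$ precisely when $q$ divides every factor. This bypasses the prime-power reduction entirely and handles arbitrary $q$ in one stroke. The trade-off is that the paper's subgroup viewpoint makes the role of the invariant factor length visually transparent (both conditions literally say ``$(\Z/q\Z)^\ell$ embeds in $G$''), whereas your quotient-cardinality argument is more computational but shorter and requires less case analysis.
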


\noindent Note that the greatest common divisor condition is necessary, as shown by the example $G = \Z/30\Z \times \Z/30\Z \cong \Z/6\Z \times \Z/10\Z \times \Z/15\Z$ for which $\lambda_1(G) = 30$.

\begin{proof}
It suffices to prove the lemma when~$q$ is a prime power. Note that the assertion $q\mid \lambda_1(G)$ is equivalent to the existence of a subgroup of~$G$ of the form $(\Z/q\Z)^m$, where~$m$ is the length of the invariant factor decomposition of~$G$; on the other hand, the assertion that $q\mid m_j$ for each $1\le j\le\ell$ is equivalent to the existence of a subgroup of~$G$ of the form $(\Z/q\Z)^\ell$. Therefore it suffices to show that the length of the invariant factor decomposition of~$G$ equals~$\ell$.

By hypothesis, there exists a prime~$p$ dividing $\gcd(m_1,\dots,m_\ell)$, which implies that $\Z/m_1\Z \times \cdots \times \Z/m_\ell\Z$ has a subgroup isomorphic to $(\Z/p\Z)^\ell$. In particular, the length of the invariant factor decomposition of~$G$ is at least~$\ell$.

On the other hand, for any finite abelian group~$H$, the length of the invariant factor decomposition of~$H$ is the maximum, over all primes~$p$ dividing the cardinality of~$H$, of the length of the $p$-Sylow subgroup of~$H$. But the $p$-Sylow subgroup of $\Z/m_1\Z \times \cdots \times \Z/m_\ell\Z$ is simply $\Z/p^{\nu_1} \times \cdots \times \Z/p^{\nu_\ell}$ where each~$\nu_j$ is the power of~$p$ in the prime factorization of~$m_j$; in particular, this length is at most~$\ell$.
\end{proof}

The following useful corollary is an immediate consequence of Lemma~\ref{abelian group lemma}:

\begin{corollary} \label{special 2 corollary}
If $m_1,\dots,m_\ell$ are even integers, then the least invariant factor of $\Z/2\Z \times \Z/m_1\Z \times \cdots \times \Z/m_\ell\Z$ equals~$2$.
\end{corollary}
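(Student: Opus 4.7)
The plan is to invoke Lemma~\ref{abelian group lemma} directly, taking the ambient group to be $G = \Z/2\Z \times \Z/m_1\Z \times \cdots \times \Z/m_\ell\Z$, viewed as the product of $\ell+1$ cyclic factors whose orders are $2, m_1, \ldots, m_\ell$. First I would verify the hypothesis: since each $m_j$ is assumed even, we have $\gcd(2, m_1, \ldots, m_\ell) = 2 > 1$, so the lemma applies.

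Next I would use the conclusion of the lemma to determine exactly which positive integers $q$ divide $\lambda_1(G)$. By the lemma, $q \mid \lambda_1(G)$ if and only if $q$ divides each of the orders $2, m_1, \ldots, m_\ell$. The condition $q \mid 2$ alone forces $q \in \{1, 2\}$, and both of these values do satisfy the condition (since each $m_j$ is even). Consequently the set of positive integers dividing $\lambda_1(G)$ is precisely $\{1, 2\}$, which forces $\lambda_1(G) = 2$.

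There is no real obstacle here, as Lemma~\ref{abelian group lemma} does all the work; the only thing to observe is that inserting the extra $\Z/2\Z$ factor guarantees the gcd hypothesis of the lemma is satisfied and simultaneously caps $\lambda_1(G)$ at $2$ by restricting which $q$ can possibly divide every factor.
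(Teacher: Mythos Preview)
Your proof is correct and is exactly the immediate application of Lemma~\ref{abelian group lemma} that the paper has in mind. The paper merely asserts the corollary is an immediate consequence of the lemma without writing out the details you supply.
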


The next proposition completely characterizes the integers~$n$ counted by~$D_q(x)$.
We define~$q_{(p)}$ to be the largest divisor of~$q$ that is not divisible by~$p$, or equivalently $q_{(p)} = q/p^r$ where $p^r$ is the exact power of~$p$ dividing~$q$.

\begin{proposition} \label{lif to factorization prop}
Fix an even integer $q\ge4$. For any positive integer~$n$, the least invariant factor of~$M_n$ is a multiple of~$q$ if and only if all of the following conditions hold:
\begin{enumerate}
\item for primes $p\nmid q$: if $p\mid n$ then we must have $p\equiv1\mod q$;
\item for odd primes $p\mid q$ such that $q_{(p)} \mid (p-1)$: either $p\nmid n$ or $p^{r+1}\mid n$; 
\item for odd primes $p\mid q$ such that $q_{(p)} \nmid (p-1)$: we must have $p\nmid n$;
\item $4\nmid n$.
\end{enumerate}
\end{proposition}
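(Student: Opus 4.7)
The plan is to analyze $M_n$ via its Chinese Remainder Theorem decomposition, use the cyclicity of $(\Z/p^a\Z)^\times$ for odd $p$, and then appeal to Lemma~\ref{abelian group lemma} to reduce divisibility of $\lambda_1(M_n)$ by $q$ to divisibility conditions on each $\phi(p^{a_p})$. Writing $n = \prod_p p^{a_p}$, we have
\[
M_n \cong \prod_p (\Z/p^{a_p}\Z)^\times,
\]
where for odd primes $p$, $(\Z/p^{a_p}\Z)^\times$ is cyclic of (even) order $\phi(p^{a_p}) = p^{a_p - 1}(p-1)$, while the contribution from $p = 2$ is trivial if $a_2 \le 1$, equal to $\Z/2\Z$ if $a_2 = 2$, and equal to $\Z/2\Z \times \Z/2^{a_2 - 2}\Z$ if $a_2 \ge 3$.

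First I would establish the necessity of condition~(4): if $4 \mid n$, then $M_n$ contains $\Z/2\Z$ as a direct factor and every other cyclic direct factor in this decomposition has even order, so Corollary~\ref{special 2 corollary} gives $\lambda_1(M_n) = 2$; since $q \ge 4$, we cannot have $q \mid \lambda_1(M_n)$. Conversely, when $a_2 \in \{0,1\}$, the $p = 2$ contribution is trivial and $M_n$ is the direct product of the cyclic groups $(\Z/p^{a_p}\Z)^\times$ as $p$ ranges over the odd prime divisors of $n$, each of even order. Provided $n$ has at least one odd prime divisor, the gcd hypothesis of Lemma~\ref{abelian group lemma} is satisfied (with $\gcd \ge 2$), and the lemma gives $q \mid \lambda_1(M_n)$ if and only if $q \mid \phi(p^{a_p})$ for every odd prime $p \mid n$.

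It remains to translate $q \mid p^{a_p - 1}(p-1)$ into the case distinctions (1)--(3). If $p \nmid q$, then $\gcd(q, p^{a_p - 1}) = 1$, so the condition becomes $q \mid p - 1$, yielding condition~(1). If $p$ is odd with $p^r \| q$, then $q = p^r q_{(p)}$ with $\gcd(q_{(p)}, p) = 1$, and the divisibility decouples via coprimality into the $p$-part condition $p^r \mid p^{a_p - 1}$ (equivalently $p^{r+1} \mid n$) together with $q_{(p)} \mid p - 1$. When $q_{(p)} \mid p - 1$, this reproduces condition~(2) (interpreted as ``$p \nmid n$ or $p^{r+1} \mid n$''); when $q_{(p)} \nmid p - 1$, no value of $a_p \ge 1$ can make the condition hold, forcing $p \nmid n$ as in condition~(3).

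I expect no serious technical obstacles. The coprimality split $q = p^r q_{(p)}$ is the one step that benefits from care, and the edge cases $n \in \{1, 2\}$ should be acknowledged separately: conditions (1)--(4) hold vacuously, and $M_n$ is trivial, so one appeals to the convention (consistent with the remark in the introduction that $\lambda_1(n)$ is not even for these~$n$) that these fall outside the intended scope of the proposition. The main work is thus the translation between the algebraic structure of $M_n$ and the arithmetic conditions on the prime factorization of $n$, which Lemma~\ref{abelian group lemma} and Corollary~\ref{special 2 corollary} render essentially mechanical.
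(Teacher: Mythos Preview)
Your proposal is correct and follows essentially the same approach as the paper: both arguments use the Chinese remainder theorem decomposition of $M_n$, invoke Corollary~\ref{special 2 corollary} to dispose of the case $4\mid n$, and then apply Lemma~\ref{abelian group lemma} to the product of even-order cyclic factors to reduce to the condition $q\mid\phi(p^{a_p})$ for each odd prime $p\mid n$, which is then unpacked into conditions (1)--(3). Your treatment differs only in organization (you merge the ``$n$ odd'' and ``$n$ twice odd'' cases by observing that the $2$-part of $M_n$ is trivial whenever $a_2\le1$, whereas the paper handles them separately via $M_{2m}\cong M_m$) and in explicitly flagging the edge cases $n\in\{1,2\}$, which the paper leaves implicit.
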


\noindent Note that there can be at most one prime~$p$ satisfying the assumptions of condition~(2) (see the proof of Proposition~\ref{Hq prop} below). The reader is invited to form the intuition that~(1) is the most important condition, so that the order of magnitude of $D_q(x)$ is the same as the order of magnitude of~$J_q(x)$ from the previous section, while the conditions~(2)--(4) at the finitely many primes dividing~$q$ are more minor and affect merely the leading constant in the asymptotic formula.

\begin{proof}
First, assume that~$n$ is odd. Write the factorization~$n=p_1^{\nu_1}\cdots p_m^{\nu_m}$. Since~$n$ is odd, each $\phi(p_j^{\nu_j})$ is even, and each $\Z/p_j^{\nu_j}\Z$ is isomorphic to $\Z/\phi(p_j^{\nu_j})\Z$. By the Chinese remainder theorem, $M_n\cong \Z/\phi(p_1^{\nu_1})\Z \times \cdots \times \Z/\phi(p_m^{\nu_m})\Z$ where $2\mid\gcd\big(\phi(p_1^{\nu_1}),\dots,\phi(p_m^{\nu_m})\big)$. Therefore, by Lemma~\ref{abelian group lemma}, the least invariant factor of~$M_n$ is a multiple of~$q$ if and only if $q\mid\phi(p^\nu) = p^{\nu-1}(p-1)$ for all $p^\nu\|n$. The lemma now follows for odd~$n$ upon examining the three given cases for the relationship between~$p$ and~$q$.

Next, assume that~$n$ is twice an odd number; then $M_n \cong M_{n/2}$, and the lemma follows for these~$n$ by the argument above applied to $n/2$.

Finally, assume that $4\mid n$. Write the factorization~$n=2^r p_1^{\nu_1}\cdots p_m^{\nu_m}$. By the Chinese remainder theorem, $M_n$ is isomorphic to $\Z/2Z \times \Z/\phi(p_1^{\nu_1})\Z \times \cdots \times \Z/\phi(p_m^{\nu_m})\Z$ if $r=2$ and isomorphic to $\Z/2Z \times \Z/2^{r-2}\Z \times \Z/\phi(p_1^{\nu_1})\Z \times \cdots \times \Z/\phi(p_m^{\nu_m})\Z$ if $r\ge3$; in either case, Corollary~\ref{special 2 corollary} implies that the least invariant factor of~$M_n$ equals~$2$. In other words, if the least invariant factor of $M_n$ is a multiple of~$q$ then $4\nmid n$, completing the proof of the proposition.
\end{proof}

\noindent We remark that an examination of the proof confirms that $\lambda_1(n)$ is even for all $n\ge3$.

We are now ready to establish an asymptotic formula for the counting function $D_q(x)$ from Definition~\ref{Dq def}.

\begin{proposition} \label{Hq prop}
Uniformly for even integers $4 \le q\le(\log x)^{1/3}$,
\[
D_q(x) = \frac{H_qx}{(\log x)^{1-1/\phi(q)}} + O\bigg( \frac x{(\log x)^{5/4-1/\phi(q)}} \bigg),
\]
where $H_q$ is defined in equation~\eqref{Hq def}.
In particular, when $q \ll (\log x)^{1/4}$,
\begin{equation} \label{Dq first bound}
D_q(x) \ll \frac{\log q}{\phi(q)} \frac{x}{(\log x)^{1-1/\phi(q)}}.
\end{equation}
\end{proposition}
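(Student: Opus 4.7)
The plan is to combine Proposition~\ref{lif to factorization prop}, which characterizes the integers counted by $D_q(x)$ via explicit prime-factorization conditions, with the uniform counting estimates from Proposition~\ref{B=1 prop} and Theorem~\ref{general IBR thm}. First I would dispose of the uniqueness remark following Proposition~\ref{lif to factorization prop}: if an odd prime $p^r \| q$ satisfies $q_{(p)} \mid (p-1)$, then every other prime divisor $\ell$ of $q$ must divide $q_{(p)}$ and hence divide $p-1$, forcing $\ell < p$; so $p$ is the largest prime factor of $q$ and in particular is unique.

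With this settled, I would decompose any $n$ counted by $D_q(x)$ as $n = 2^a p^b m$: here $a \in \{0,1\}$ (from condition~(4)), the factor $p^b$ is present only in \emph{Case~B} (where the distinguished odd prime $p$ exists) subject to $b \in \{0\} \cup \{r+1, r+2, \dots\}$ (from condition~(2)), and $m$ has all prime factors $\equiv 1 \pmod q$ (by condition~(1), together with condition~(3) forbidding all other primes dividing $q$). In the complementary \emph{Case~A} we obtain immediately $D_q(x) = J_q(x) + J_q(x/2)$, so Proposition~\ref{B=1 prop} yields the main term $(3G_q/2) \cdot x/(\log x)^{1-1/\phi(q)}$, matching~\eqref{Hq def}. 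In Case~B,
\[
D_q(x) = \sum_{a \in \{0,1\}} \biggl( \sum_{b \geq 0} J_q\bigl(x/(2^a p^b)\bigr) - \sum_{b=1}^{r} J_q\bigl(x/(2^a p^b)\bigr) \biggr),
\]
where the inner infinite sum counts integers $\leq x/2^a$ whose prime factors lie in $\{p\} \cup \{\ell : \ell \equiv 1 \pmod q\}$. I would evaluate this in one stroke via Theorem~\ref{general IBR thm} with $\cI = \{p\}$ and $\cR = \emptyset$, whose main constant is $G_q \cdot p/(p-1)$; the correcting finite sum has only $r \leq (\log q)/(\log p)$ terms, each handled by Proposition~\ref{B=1 prop}.

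Via the algebraic identity $p/(p-1) - \sum_{b=1}^{r} p^{-b} = 1 + 1/(p^r(p-1))$, the Case~B main term collapses to $(3G_q/2)(1 + 1/(p^r(p-1))) \cdot x/(\log x)^{1-1/\phi(q)}$, matching the definition~\eqref{Hq def} of $H_q$. For error control, each application of Proposition~\ref{B=1 prop} at $x/(2^a p^b)$ with $b \leq r$ uses $\log(x/(2^a p^b)) = \log x + O(\log q) \sim \log x$ (valid under $q \leq (\log x)^{1/3}$), and the $r = O(\log q) = O(\log\log x)$ terms in the subtracted sum contribute at most $O(\log\log x)$ copies of the Proposition~\ref{B=1 prop} error, which is absorbed into $O(x/(\log x)^{5/4 - 1/\phi(q)})$. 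The auxiliary bound~\eqref{Dq first bound} then follows from Proposition~\ref{Gq estimate prop}, since $1 + 1/(p^r(p-1)) \leq 2$ is bounded. The main obstacle is the summation over $b \geq r+1$ in Case~B, which I avoid handling termwise (to sidestep any uniformity breakdown) by packaging the full $b \geq 0$ tail into a single application of Theorem~\ref{general IBR thm} and subtracting only the short initial segment.
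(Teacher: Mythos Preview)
Your approach is correct and diverges from the paper's in Case~B. The paper likewise reduces to odd $n$ via $D_q(x) = D^1_q(x) + D^1_q(x/2)$ and, when the special prime power $p^r$ exists, writes $D^1_q(x) = J_q(x) + \sum_{\ell \ge r+1} J_q(x/p^\ell)$; but rather than invoking Theorem~\ref{general IBR thm} it truncates this sum at a parameter $L$ with $p^L \approx e^{\sqrt{\log x}}$, bounds the tail trivially by $x/p^L$, applies Proposition~\ref{B=1 prop} term by term to the $O(L)$ surviving summands, and then sums the resulting geometric series explicitly to produce the factor $1 + 1/(p^r(p-1))$. Your repackaging of the full range $b \ge 0$ into a single call to Theorem~\ref{general IBR thm} with $\cI = \{p\}$, followed by subtracting only the short segment $1 \le b \le r$, is a clean alternative that dispenses with the truncation parameter and actually exercises the IBR variant (which the paper states but does not use in this proof). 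One small bookkeeping slip: claiming that the $r = O(\log\log x)$ subtracted terms contribute ``$O(\log\log x)$ copies'' of the Proposition~\ref{B=1 prop} error would overshoot the target by a $\log\log x$ factor; the fix is immediate, since the $b$th error term carries the weight $p^{-b}$, so the sum over $b$ is geometric and bounded by a constant multiple of the first term, yielding the required $O\bigl(x/(\log x)^{5/4-1/\phi(q)}\bigr)$.
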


\begin{proof}
Define $D^1_q(x)$ to be the number of odd positive integers $n\le x$ such that $q$ divides the least invariant factor of~$n$.
By condition~(4) of Lemma~\ref{lif to factorization prop}, every even integer counted by $D_q(x)$ is of the form $2m$ where $m$ is counted by $D^1_q(\frac x2)$. Therefore $D_q(x) = D^1_q(x) + D^1_q(\frac x2)$, and so it suffices to show that
\begin{multline} \label{pre 3/2}
D^1_q(x) = \frac{G_qx}{(\log x)^{1-1/\phi(q)}} \begin{cases}
1 + 1/p^r(p-1), &\text{if there exists an odd prime power} \\
&\qquad\quad p^r\|q \text{ such that } q/p^r \mid (p-1), \\
1, &\text{otherwise}
\end{cases} \\
+ O\bigg( \frac x{(\log x)^{5/4-1/\phi(q)}} \bigg).
\end{multline}
If there is no prime power $p^r$ satisfying the condition on the right-hand side of equation~\eqref{pre 3/2}, then by conditions~(1) and~(3) of Lemma~\ref{lif to factorization prop} the integers counted by $D^1_q(x)$ are precisely those integers all of whose prime factors are congruent to 1\mod q, the number of which is $J_q(x)$ by definition; hence the asymptotic formula~\eqref{pre 3/2} is exactly Proposition~\ref{B=1 prop} in this case.

Next we argue why there can be at most one prime power~$p^r$ satisfying the condition on the right-hand side of equation~\eqref{pre 3/2}: if $p_1^{r_1}$ and $p_2^{r_2}$ were to satisfy these conditions (with $p_1\ne p_2$), then
\begin{equation} \label{only one p}
p_1p_2 \mid q = p_2^{r_2} (q/p_2^{r_2}) \mid (q/p_1^{r_1}) (q/p_2^{r_2}) \mid (p_1-1)(p_2-1) < p_1p_2,
\end{equation}
a contradiction.

The only remaining case to consider is if there is a (unique) odd prime power $p^r$ satisfying the condition on the right-hand side of equation~\eqref{Hq def}. In this case, conditions~(1)--(3) of Lemma~\ref{lif to factorization prop} imply that
\[
D^1_q(x) = D^1_{q,0}(x) + \sum_{\ell\ge r+1} D^1_{q,\ell}(x),
\]
where $D^1_{q,\ell}(x)$ denotes the number of odd integers $n\le x$ of the form $n=p^\ell m$ where all prime factors of $m$ are congruent to 1\mod q; this quantity is exactly $J_q(x/p^\ell)$. Since the number of integers up to~$x$ that are divisible by $p^L$ is trivially at most $x/p^L$ for any positive integer~$L$, we may truncate the above sum and apply Proposition~\ref{B=1 prop} to obtain
\begin{align*}
D^1_q(x) &= J_q(x) + \sum_{\ell=r+1}^{L-1} J_q\bigg( \frac x{p^\ell} \bigg) + O\bigg( \frac x{p^L} \bigg) \\
&= \frac{G_q x}{(\log x)^{1-1/\phi(q)}} + O\bigg( \frac x{(\log x)^{5/4-1/\phi(q)}} \bigg) \\
&\qquad{}+ \sum_{\ell=r+1}^{L-1} \bigg( \frac{G_q x/p^\ell}{(\log (x/p^\ell))^{1-1/\phi(q)}} + O\bigg( \frac{x/p^\ell}{(\log (x/p^\ell))^{5/4-1/\phi(q)}} \bigg) \bigg) + O\bigg( \frac x{p^L} \bigg).
\end{align*}
If we choose~$L$ so that $p^{L-1} < \exp(\sqrt{\log x}) \le p^L$, then $\log(x/p^\ell) = (\log x)(1+O(1/\sqrt{\log x}))$ for each~$\ell$ in the sum, and hence the above expression can be simplified to
\begin{align*}
D^1_q(x) &= \bigg( G_q + O\bigg( \frac 1{(\log x)^{1/4}} \bigg) \bigg) \frac x{(\log x)^{1-1/\phi(q)}} \bigg( 1 + \sum_{\ell=r+1}^{L-1} \frac1{p^\ell} \bigg) + O\bigg( \frac x{p^L} \bigg) \\
&= \bigg( G_q + O\bigg( \frac 1{(\log x)^{1/4}} \bigg) \bigg) \frac x{(\log x)^{1-1/\phi(q)}} \bigg( 1 + \frac{1}{p^r(p-1)} - \frac1{p^{L-1}(p-1)} \bigg) + O\bigg( \frac x{p^L} \bigg) \\
&= \bigg( G_q + O\bigg( \frac 1{(\log x)^{1/4}} \bigg) \bigg) \frac x{(\log x)^{1-1/\phi(q)}} \bigg( 1 + \frac{1}{p^r(p-1)} \bigg) + O\bigg( \frac x{\exp(\sqrt{\log x})} \bigg)
\end{align*}
(since $G_q \ll 1$ by Proposition~\ref{Gq estimate prop}), which is enough to establish equation~\eqref{pre 3/2} in this final case.
\end{proof}

It is also possible to prove Proposition~\ref{Hq prop} directly using the Selberg--Delange method, inserting relevant Euler factors at the primes dividing~$q$. However, we have chosen our proof to proceed via Theorem~\ref{general B thm} in the hopes that developing this general infrastructure will have value beyond this paper.

\section{Counting $n$ for which $\lambda_1(n) = q$} \label{second thm sec}

The goal of this section is to obtain an asymptotic formula for $E_q(x)$, the counting function of those integers~$n$ for which $\lambda_1(n)=q$ (and thereby establish Theorem~\ref{Eq theorem}), from the asymptotic formula established in the previous section for $D_q(x)$, the counting function of those integers~$n$ for which $q\mid \lambda_1(n)$. We proceed essentially by inclusion--exclusion (encoded, as usual, by the M\"obius mu-function). For technical reasons, however, we work with slight variants of the functions $E_q(x)$ and $D_q(x)$ in which we insist that the integers being counted do not have few prime factors. The following definition, in which $\omega(n)$ as usual denotes the number of distinct prime factors of~$n$, can be compared with Definitions~\ref{Eq def} and~\ref{Dq def}:

\begin{definition}
Given an integer parameter $b\ge4$, define
\begin{align*}
E_q^*(x;b) &= \#\{ n \le x\colon \lambda_1(n) = q \text{ and } \omega(n) > b \} \\
D_q^*(x;b) &= \#\{ n \le x\colon q \mid \lambda_1(n) \text{ and } \omega(n) > b \}
\end{align*}
\end{definition}

\begin{lemma} \label{Eq with M lemma}
For all integers $q,M,b\ge4$ and all real numbers $x\ge e^e$,
\[
E_q(x) = D_q(x) + O_b \bigg(\sum_{1 <m<M} D_{mq}(x) + \sum_{m \ge M} D_{mq}^*(x;b) +  \frac{Mx(\log \log x)^{b-1}}{\log x} \bigg).
\]
\end{lemma}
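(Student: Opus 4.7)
The plan is to apply M\"obius inversion to the identity
\[
D_q(x) = \sum_{m \ge 1} E_{mq}(x),
\]
which holds because $q \mid \lambda_1(n)$ is equivalent to $\lambda_1(n) = mq$ for some integer $m\ge1$; since $\lambda_1(n)\le n\le x$, this is a finite sum. Standard M\"obius inversion over multiples of $q$ then yields
\[
E_q(x) = \sum_{d\ge1} \mu(d) D_{dq}(x) = D_q(x) + \sum_{2\le d<M} \mu(d) D_{dq}(x) + \sum_{d\ge M} \mu(d) D_{dq}(x).
\]
I will estimate the two tail sums separately.

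The short range gives, upon bounding $|\mu(d)|\le 1$, a contribution of at most $\sum_{1<m<M} D_{mq}(x)$, which accounts for the first error term in the statement. For the long range $d\ge M$, I decompose $D_{dq}(x) = D^*_{dq}(x;b) + R_{dq}(x;b)$, where
\[
R_{dq}(x;b) := D_{dq}(x) - D^*_{dq}(x;b) = \#\{n \le x : dq\mid\lambda_1(n),\ \omega(n)\le b\}.
\]
The starred piece contributes at most $\sum_{m\ge M} D^*_{mq}(x;b)$ in absolute value, matching the second error term.

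The crux of the argument is bounding $\sum_{d\ge M}\mu(d) R_{dq}(x;b)$. Swapping the order of summation gives
\[
\sum_{d\ge M}\mu(d) R_{dq}(x;b) = \sum_{\substack{n\le x\\ q\mid\lambda_1(n),\ \omega(n)\le b}} \ \sum_{\substack{d\mid \lambda_1(n)/q\\ d\ge M}} \mu(d),
\]
and the key trick is the complementarity identity
\[
\sum_{\substack{d\mid k\\ d\ge M}}\mu(d) = \mathbf{1}[k=1] - \sum_{\substack{d\mid k\\ d<M}}\mu(d),
\]
whose right-hand side has absolute value at most $M$ (there are fewer than $M$ positive integers below $M$ altogether, each contributing at most $1$). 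This is precisely where the factor of $M$ in the stated error arises. Combining with the classical Hardy--Ramanujan/Landau estimate
\[
\#\{n \le x : \omega(n)\le b\} \ll_b \frac{x(\log\log x)^{b-1}}{\log x}
\]
then produces the required $O_b\bigl(Mx(\log\log x)^{b-1}/\log x\bigr)$ term. The main subtlety to watch is exactly this complementary-divisor trick: a naïve bound of $\bigl|\sum_{d\mid k,\ d\ge M}\mu(d)\bigr|$ by the number of divisors of $\lambda_1(n)/q$ exceeding $M$ would be far too weak and would fail to give a clean bound linear in~$M$.
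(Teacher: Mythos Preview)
Your proof is correct. It differs from the paper's argument in organization: the paper applies M\"obius inversion to the \emph{starred} quantities, obtaining $E_q^*(x;b)=\sum_{m\ge1}\mu(m)D_{mq}^*(x;b)$, and only afterwards converts each of the $O(M)$ starred terms appearing (namely $E_q^*$, $D_q^*$, and the $D_{mq}^*$ for $1<m<M$) to their unstarred counterparts, each conversion costing the Landau bound $\ll_b x(\log\log x)^{b-1}/\log x$. You instead invert the unstarred quantities directly, and for the tail $d\ge M$ split off $R_{dq}(x;b)=D_{dq}(x)-D_{dq}^*(x;b)$, swap the order of summation, and invoke your complementary-divisor identity $\bigl|\sum_{d\mid k,\,d\ge M}\mu(d)\bigr|\le M$ to extract the factor of~$M$. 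Both routes arrive at the same error term; the paper in fact remarks that the factor~$M$ is not sharp, and your argument makes equally transparent why it appears.
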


\begin{proof}
It is obvious from the definitions of $D_q^*$ and $E_q^*$ that
$D_q^*(x;b) = \sum_{m=1}^\infty E_{mq}^*(x;b)$; a version of M\"obius inversion then implies that $E_q^*(x;b) = \sum_{m=1}^\infty \mu(m) D_{mq}^*(x;b)$. (Both series are actually finite sums for any given value of~$x$, so there are no issues of convergence. We note that these identities hold even for $q=2$.)
We write
\begin{align} \label{Eq* first formula}
E_q^*(x;b) &= D_q^*(x;b) + O \bigg( \sum_{1 <m<M} D_{mq}^*(x;b) + \sum_{m \ge M} D_{mq}^*(x;b) \bigg).
\end{align}
Using known estimates for the number of integers with a constant number of prime factors (see~\cite[Section~7.4]{MV} for example),
\begin{align*}
0 \le E_q(x) - E_q^*(x;b) &= \#\{ n \le x\colon \lambda_1(n) = q \text{ and } \omega(n) \le b \} \\
&\le \#\{ n\le x\colon \omega(n) \le b \} \\
&\ll_b \sum_{k \le b} \frac{x}{\log x} \frac{(\log \log x)^{k-1}}{(k-1)!} \ll_b \frac{x(\log \log x)^{b-1}}{\log x}.
\end{align*}
The same argument gives $0 \le D_{mq}(x) - D_{mq}^*(x;b) \ll_b {x(\log \log x)^{b-1}}/{\log x}$ uniformly in~$mq$, and thus the formula~\eqref{Eq* first formula} becomes
\begin{align*}
E_q(x) &= E_q^*(x;b) + O_b \bigg( \frac{x(\log \log x)^{b-1}}{\log x} \bigg) \\
&= D^*_q(x) + O_b \bigg(\sum_{1 <m<M} D_{mq}^*(x;b) + \sum_{m \ge M} D_{mq}^*(x;b) +  \frac{x(\log \log x)^{b-1}}{\log x} \bigg) \\
&= D_q(x) + O_b \bigg(\sum_{1 <m<M} D_{mq}(x) + \sum_{m \ge M} D_{mq}^*(x;b) +  \frac{Mx(\log \log x)^{b-1}}{\log x} \bigg)
\end{align*}
as desired. (A more careful argument could remove the factor of~$M$ from the last error term, but this version suffices for our purposes.)
\end{proof}

We already have the upper bound~\eqref{Dq first bound} that can be applied to $D_{mq}(x)$ for small~$m$; we now complement that with an upper bound for $D_{mq}^*(x;b)$ that improves as~$m$ increases.

\begin{lemma} \label{Dmq* lemma}
For $b\ge3$ and $x\ge e^e$,
\[
D_{mq}^*(x;b) \ll_b \frac{x(\log \log x)^{b-1}}{m^{b-2}}.
\]
\end{lemma}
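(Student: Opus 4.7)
The plan is to use Proposition~\ref{lif to factorization prop} to show that any $n$ counted by $D_{mq}^*(x;b)$ has at least $b-1$ distinct prime factors in the single reduced residue class $1\pmod{mq}$, and then reduce the count to a divisor sum handled by a Mertens-type bound.

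First, inspecting conditions (2)--(4) of Proposition~\ref{lif to factorization prop}, I would observe that at most two distinct prime factors of $n$ are allowed to fail the congruence $p\equiv 1\pmod{mq}$: namely $p=2$ (to the first power only, by condition (4)) and the at-most-one ``special'' odd prime $p^r \| mq$ with $(mq)_{(p)}\mid(p-1)$ (by condition (2); uniqueness as noted after Proposition~\ref{lif to factorization prop}). Combined with $\omega(n)>b$, this forces $n$ to have at least $\omega(n)-2\ge b-1$ distinct prime factors $\equiv 1\pmod{mq}$, each strictly greater than $mq\ge m$.

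Second, for each such~$n$, I would choose any $b-1$ of these prime factors; their product $d$ is a squarefree divisor of $n$ with $\omega(d)=b-1$, with all prime factors $\equiv 1\pmod{mq}$. Summing over all such~$d$ gives
\[
D_{mq}^*(x;b)\le \sum_{d}\lfloor x/d\rfloor \le x\sum_{d}\frac{1}{d},
\]
where $d$ ranges over squarefree integers $\le x$ with $\omega(d)=b-1$ and prime factors in the class $1\pmod{mq}$. A Mertens-type bound (for instance via Brun--Titchmarsh) yields $\sum_{p\le x,\,p\equiv 1\pmod{mq}} 1/p \ll (\log\log x)/\phi(mq)$ uniformly in~$mq$; raising to the $(b-1)$-th power and using $\phi(mq)\ge\phi(m)\gg m/\log\log m$, I obtain
\[
D_{mq}^*(x;b)\ll_b \frac{x(\log\log x)^{b-1}(\log\log m)^{b-1}}{m^{b-1}},
\]
which is within a constant of the target $x(\log\log x)^{b-1}/m^{b-2}$ whenever $(\log\log m)^{b-1}\le m$; this holds for all~$m$ above some $b$-dependent constant, and for smaller~$m$ the target already exceeds~$x$ so the trivial estimate $D_{mq}^*(x;b)\le x$ suffices.

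The main obstacle is the careful accounting of the logarithmic factors. Using only $b-2$ primes in~$d$ would give $(\log\log x)^{b-2}$ in the numerator, off by a factor of~$\log\log x$; taking~$b-1$ primes produces the correct exponent but introduces the factor $(\log\log m)^{b-1}/m$ coming from $\phi(mq)^{-(b-1)}$, which must then be absorbed into the target shape via the size dichotomy on~$m$ described above.
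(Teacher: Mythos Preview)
Your proof is correct and follows essentially the same approach as the paper: choose $b-1$ prime factors of~$n$ lying in the class $1\pmod{mq}$, bound by the $(b-1)$th power of the Mertens-type sum $\sum_{p\le x,\,p\equiv1\pmod{mq}}1/p\ll(\log\log x)/\phi(mq)$, and then control $\phi(m)$ from below. The only cosmetic difference is in this last step: the paper invokes $\phi(m)\gg_b m^{(b-2)/(b-1)}$ directly to obtain $1/\phi(m)^{b-1}\ll_b 1/m^{b-2}$ in one line, whereas you use $\phi(m)\gg m/\log\log m$ together with a small--$m$/large--$m$ dichotomy, which is a bit less tidy but equally valid.
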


\begin{proof}
First note that by the argument surrounding equation~\eqref{only one p}, with $q$ replaced by $mq$, there can be at most one prime~$p_0$ such that $(mq)_{(p_0)} \mid (p_0-1)$. Suppose that~$n$ is counted by $D_{mq}^*(x;b)$. We know that $\omega(n)\ge b+1$, and so we can choose $b-1$ distinct odd primes $p_1,\dots,p_{b-1}$ dividing~$n$ (and hence not exceeding~$x$), none of which is equal to $p_0$ if it exists. Of course the number of $n\le x$ divisible by these $b-1$ primes is at most $x/p_1\cdots p_{b-1}$.

We also know that $mq$ divides $\lambda_1(n)$; by Proposition~\ref{lif to factorization prop} with~$q$ replaced by~$mq$, we must have $p_1\equiv\cdots\equiv p_{b-1}\equiv1\mod{mq}$.
Sorting all such integers~$n$ by the primes $p_1,\dots,p_{b-1}$ (and ignoring the possibility of double-counting), we conclude that
\[
D_{mq}^*(x;b) \le \sum_{\substack{p_1 \le x \\ p_1 \equiv 1 \mod{mq}}} \dots \sum_{\substack{p_{b-1} \le x \\ p_{b-1} \equiv 1 \mod{mq}}} \frac{x}{p_1 \dots p_{b-1}} \le x \bigg( \sum_{\substack{p \le x \\ p \equiv 1 \mod{mq}}} \frac{1}{p} \bigg)^{b-1}.
\]
It follows quickly from the Brun--Titchmarsh theorem and partial summation that for $x\ge e^e$,
\[
\sum_{\substack{p \le x \\ p \equiv 1 \mod{mq}}} \frac{1}{p} \ll \frac{\log \log x}{\phi(mq)};
\]
therefore
\begin{align*}
D_{mq}^*(x;b) &\ll_b x \bigg( \frac{\log \log x}{\phi(mq)} \bigg)^{b-1} \le x \bigg( \frac{\log \log x}{\phi(m)} \bigg)^{b-1}.
\end{align*}
The lemma follows since $\phi(m) \gg_b m^{(b-2)/(b-1)}$.
\end{proof}

We now have the tools we need to assemble our asymptotic formula for $E_q(x)$ for all even integers $q\ge4$.

\begin{proof}[Proof of Theorem~\ref{Eq theorem}]
Beginning with Lemma~\ref{Eq with M lemma}, we input the upper bound~\eqref{Dq first bound} for the terms in the first error-term sum (valid as long as $M\le(\log x)^{1/4}$) and Lemma~\ref{Dmq* lemma} for the terms in the second error-term sum, obtaining
\begin{align*}
E_q(x) &= D_q(x) + O_b \bigg(\sum_{1 <m<M} \frac{\log mq}{\phi(mq)} \frac{x}{(\log x)^{1-1/\phi(mq)}} + \sum_{m \ge M} \frac{x(\log \log x)^{b-1}}{m^{b-2}} +  \frac{Mx(\log \log x)^{b-1}}{\log x} \bigg) \\
&= D_q(x) + O_{q,b} \bigg( \frac{x}{(\log x)^{1-1/\phi(2q)}} \sum_{1<m<M} \frac{\log m}{\phi(m)} + \frac{x(\log \log x)^{b-1}}{M^{b-3}} +  \frac{Mx(\log \log x)^{b-1}}{\log x} \bigg) \\
&= D_q(x) + O_{q,b} \bigg( \frac{x(\log M)^2}{(\log x)^{1-1/2\phi(q)}} + \frac{x(\log \log x)^{b-1}}{M^{b-3}} +  \frac{Mx(\log \log x)^{b-1}}{\log x} \bigg).
\end{align*}
Choosing $M=\big\lfloor (\log x)^{1/3\phi(q)} \big\rfloor$ and $b=3\phi(q)+2$, we conclude that
\begin{align*}
E_q(x) &= D_q(x) + O_{q} \bigg( \frac{x(\log\log x)^2}{(\log x)^{1-1/2\phi(q)}} + \frac{x(\log \log x)^{3\phi(q)+1}}{(\log x)^{1-1/3\phi(q)}} \bigg) = D_q(x) + O_{q} \bigg( \frac{x(\log\log x)^2}{(\log x)^{1-1/2\phi(q)}} \bigg),
\end{align*}
which suffices to establish the theorem in light of Proposition~\ref{Hq prop}.
\end{proof}

\section{Counting $n$ for which $\lambda_1(n) \ne 2$} \label{main thm sec}

We adapt the method of the previous section to assemble our asymptotic formula for~$T(x)$.

\begin{proof}[Proof of Theorem~\ref{T theorem}]
Because of the identity
\[
T(x) = \lfloor x\rfloor -E_2(x) = D_2(x) + 2 - E_2(x),
\]
we begin by examining the function $E_2(x)$ using the method of the proof of Lemma~\ref{Eq with M lemma}, whose deductions we will use here without specific comment.
We write
\begin{align*}
E_2^*(x;b) &= \sum_{m=1}^\infty \mu(m)D_{2m}^*(x;b) \\
&= D_2^*(x;b) - D_4^*(x;b)-D_6^*(x;b) + O \bigg( D_{10}^*(x;b) + \sum_{7\le m<M} D_{2m}^*(x;b) + \sum_{m\ge M} D_{2m}^*(x;b) \bigg) \\
&= D_2(x)-D_4(x)-D_6(x) \\
&\qquad{}+ O \bigg( D_{10}(x) + \sum_{7\le m<M} D_{2m}(x) + \sum_{m\ge M} D_{2m}^*(x;b) + \frac{M x(\log\log x)^{b-1}}{\log x} \bigg).
\end{align*}
Again we input the upper bound~\eqref{Dq first bound} for the terms in the first error-term sum (valid as long as $M\le(\log x)^{1/4}$) and Lemma~\ref{Dmq* lemma} for the terms in the second error-term sum, obtaining
\begin{align*}
T(x) &= D_2(x)+2-E_2(x) \\
&= D_4(x) + D_6(x) + O \bigg( \frac{x}{(\log x)^{1-1/\phi(10)}} \\
&\qquad{} + \sum_{7\le m<M} \frac{\log 2m}{\phi(2m)} \frac{x}{(\log x)^{1-1/\phi(mq)}}+ \frac{x(\log \log x)^{b-1}}{M^{b-3}} + \frac{M x(\log\log x)^{b-1}}{\log x} \bigg) \\
&= D_4(x) + D_6(x) + O \bigg( \frac{x}{(\log x)^{3/4}} \\
&\qquad{} + \frac{x}{(\log x)^{5/6}} \sum_{m<M} \frac{\log m}{\phi(m)} + \frac{x(\log \log x)^{b-1}}{M^{b-3}} + \frac{M x(\log\log x)^{b-1}}{\log x} \bigg),
\end{align*}
where in the last line we have used the fact that $\phi(mq)\ge6$ for $mq\ge14$. Choosing $M=\lfloor (\log x)^{1/5} \rfloor$ and $b=7$, we conclude that
\begin{align*}
T(x) &= D_4(x) + D_6(x) + O \bigg( \frac{x}{(\log x)^{3/4}} + \frac{x(\log\log x)^2}{(\log x)^{5/6}} + \frac{x(\log \log x)^6}{(\log x)^{4/5}} \bigg) \\
&= D_4(x) + D_6(x) + O \bigg( \frac{x}{(\log x)^{3/4}} \bigg),
\end{align*}
which again suffices to establish the theorem in light of Proposition~\ref{Hq prop}.
\end{proof}

\setcounter{section}{1}
\renewcommand{\thesection}{\Alph{section}}

\section*{Appendix: The Selberg--Delange method with additional uniformity} \label{SD sec}

The goal of this appendix is to establish a version of the Selberg--Delange formula where the dependence on certain parameters has been made explicit. More precisely, Tenenbaum~\cite[Chapter~II.5]{tenenbaum15} has a statement of the Selberg--Delange formula for Dirichlet series~$F(s)$ satisfying certain hypotheses (see Definitions~\ref{property P def} and~\ref{type T def} below) that involve two parameters in particular: $c_0$, a constant involved in the definition of a zero-free-region-shaped domain of analytic continuation for~$F(s)$, and $\delta$, a constant involved in the rate of growth of~$F(s)$ within that domain. In Tenenbaum's work, implicit constants in the formulas are allowed to depend upon~$c_0$ and~$\delta$. Here, we give a version of the theorem that is uniform in those two parameters; see Theorem~\ref{thm_SD} for our most general statement and Corollary~\ref{SD corollary} for an important special case.

We recommend that the reader have Tenenbaum's book available for reference; however, our proof herein is largely self-contained once we quote relevant results from that source. We are essentially following Tenenbaum's proof (except for rendering one error term, in Lemma~\ref{Phi' asymptotic lemma} below, in a form more convenient for our present purposes), although we have reorganized the order of the steps for clarity in this context.

\subsection{Definitions and preliminaries}

All of the definitions in this section are taken directly fom Tenenbaum's book; we also record several simple consequences of those definitions for later use. All of this notation will be used throughout this appendix.

\begin{definition} \label{Z def}
For any complex number $z$, the function $Z(s;z) = \big( (s-1)\zeta(s) \big)^z / s$ is defined on any simply connected domain not containing a zero of $\zeta(s)$. We always assume that its domain of definition contains $[1,\infty)$, and we choose the branch of the complex exponential satisfying $Z(1;z)=1$.

As stated in~\cite[Part~II, Theorem 5.1]{tenenbaum15}, there exist entire functions $\gamma_j(z)$ such that the function $Z(s;z)$ has the Taylor expansion $Z(s;z) = \sum_{j=0}^\infty \frac1{j!} \gamma_j(z) (s-1)^j$, and moreover, on the disk $\{|z|\le A\}$, we have $\frac1{j!} \gamma_j(z) \ll_{A,\ep} (1+\ep)^j$ for any~$\ep>0$.
\end{definition}

\begin{lemma} \label{Z bound lemma}
For any $0<r<1$, we have $Z(s;z) \ll_{A,r} 1$ uniformly for $|s-1|\le r$ and $|z|\le A$.
\end{lemma}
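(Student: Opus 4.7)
The plan is to deduce the bound directly from the Taylor expansion of $Z(s;z)$ about $s=1$ that is quoted in Definition~\ref{Z def}. The crucial input is the coefficient estimate $\frac1{j!}|\gamma_j(z)| \ll_{A,\ep} (1+\ep)^j$, which is valid for every $\ep>0$ uniformly on $|z|\le A$; this estimate already implies that the series $\sum_j \tfrac1{j!} \gamma_j(z)(s-1)^j$ converges on the open disk $|s-1|<1$, and we only need $|s-1|\le r$ for some $r<1$.

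More concretely, first I would fix $r\in(0,1)$ and choose any $\ep>0$ small enough that $(1+\ep)r<1$; the choice $\ep = (1-r)/(2r)$, giving $(1+\ep)r=(1+r)/2<1$, works and depends only on~$r$. Next I would apply the triangle inequality termwise in the Taylor expansion from Definition~\ref{Z def}:
\begin{equation*}
|Z(s;z)| \le \sum_{j=0}^\infty \frac{|\gamma_j(z)|}{j!} |s-1|^j \le C_{A,\ep} \sum_{j=0}^\infty \bigl((1+\ep)r\bigr)^j = \frac{C_{A,\ep}}{1-(1+\ep)r},
\end{equation*}
where $C_{A,\ep}$ is the implied constant from Definition~\ref{Z def}. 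Since both $\ep$ and the resulting constant depend only on $A$ and~$r$, this gives the claimed uniform bound $Z(s;z)\ll_{A,r} 1$.

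The only subtlety worth pausing over is verifying that the Taylor expansion is actually valid throughout the closed disk $|s-1|\le r$ rather than merely in some a priori smaller neighborhood of~$s=1$; but this is an automatic consequence of the coefficient bound, since letting $\ep\to 0$ shows the radius of convergence is at least~$1$. There is essentially no obstacle here: the work has been done upstream in establishing the coefficient bound (this is Theorem~5.1 of~\cite[Part II]{tenenbaum15}), and the present lemma amounts to packaging that bound in a uniform supremum form that will be convenient for the Selberg--Delange estimates later in the appendix.
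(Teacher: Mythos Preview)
Your proof is correct and essentially identical to the paper's: both choose $\ep=(1-r)/(2r)$, apply the coefficient bound from Definition~\ref{Z def} termwise in the Taylor expansion, and sum the resulting geometric series $\sum_j ((1+r)/2)^j = 2/(1-r)$. Your added remark about the radius of convergence is a harmless elaboration but not needed for the argument.
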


\begin{proof}
Choosing $\ep=\frac{1-r}{2r}$ in the estimate in Definition~\ref{Z def}, we deduce from the Taylor expansion of $Z(s;z)$ that
\[
|Z(s;z)| \le \sum_{j=0}^\infty \bigg| \frac{\gamma_j(z)}{j!} \bigg| |s-1|^j \ll_{A,r} \sum_{j=0}^\infty \bigg( 1+\frac{1-r}{2r} \bigg)^{\!j} r^j = \frac2{1-r} \ll_r1
\]
under the hypotheses of the lemma, as desired.
\end{proof}

We define $\log^+(y) = \max\{ 0,\log y\}$; the following calculus inequality will be helpful later.

\begin{lemma} \label{lil calc lemma}
Given real numbers $A>0$ and $0<\delta\le1$, we have $(1+\log^+y)^A \ll_A \delta^{-A} (1+y)^{\delta/2}$ for all $y\ge0$.
\end{lemma}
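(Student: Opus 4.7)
The plan is to reduce the inequality to a standard comparison between a polynomial and an exponential, after a logarithmic substitution that absorbs $\delta$.

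First I would dispose of the range $0 \le y \le 1$, where $\log^+ y = 0$ and so the left-hand side equals $1$; since $\delta \le 1$ gives $\delta^{-A} \ge 1$ and $(1+y)^{\delta/2} \ge 1$, the asserted inequality holds with implicit constant $1$ there.

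For $y \ge 1$ the goal reduces to $(1+\log y)^A \ll_A \delta^{-A}(1+y)^{\delta/2}$, and since $(1+y)^{\delta/2} \ge y^{\delta/2}$ it suffices to prove $\delta^A(1+\log y)^A \ll_A y^{\delta/2}$. I would make the substitution $x = (\delta/2)\log y$, so that $x \ge 0$, $y^{\delta/2} = e^x$, and $\delta(1+\log y) = \delta + 2x$. The desired estimate becomes $(\delta + 2x)^A \ll_A e^x$ uniformly for $x \ge 0$ and $0 < \delta \le 1$. Because $\delta \le 1$, the left side is bounded by $(1+2x)^A$, and the elementary fact that $(1+2x)^A \ll_A e^x$ for $x \ge 0$ (for instance, by comparing to the power series of $e^{x}$) finishes the proof.

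No step here looks like a genuine obstacle; the only thing to take care with is to ensure that the implicit constant depends only on $A$ and not on $\delta$, which the substitution $x = (\delta/2)\log y$ was designed to achieve by moving all $\delta$-dependence into the explicit $\delta^{-A}$ factor on the right-hand side.
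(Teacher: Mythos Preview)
Your argument is correct and follows essentially the same route as the paper: dispose of small $y$ trivially, then for larger $y$ bound $(1+\log y)^A$ against $\delta^{-A}y^{\delta/2}$ via the standard polynomial-versus-exponential comparison. The only cosmetic difference is that the paper directly maximizes $(1+\log t)^A/t^{\delta/2}$ by calculus and reads off the constant, whereas your substitution $x=(\delta/2)\log y$ reduces to the $\delta$-free inequality $(1+2x)^A\ll_A e^x$; both executions are equally valid.
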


\begin{proof}
Since the assertion is trivial for $0\le y\le e^{-1}$, we may assume that $y\ge e^{-1}$. The function $(1+\log t)^A/t^{\delta/2}$ is increasing for $e^{-1}\le t\le e^{2A/\delta-1}$ and decreasing thereafter, and its value at $t=e^{2A/\delta-1}$ equals $2^A e^{{\delta/2}-A} (\frac{A}{\delta })^A \le 2^A e^{1/2-A} (\frac{A}{\delta })^A$; this global maximum value shows that $(1+\log y)^A \ll_A \delta^{-A} y^{\delta/2}$ for all $y\ge e^{-1}$, which is enough to establish the lemma.
\end{proof}

We continue to follow the same convention as Tenenbaum of writing the real and imaginary parts of complex numbers as $s=\sigma+i\tau$.

\begin{definition} \label{property P def}
Let $z$ be a complex number, and let $c_0$, $\delta$, and $M$ be positive real numbers with $\delta\le1$. A Dirichlet series $F(s)$ {\em has the property $\P(z;c_0,\delta,M)$} if the Dirichlet series $G(s;z) = F(s) \zeta(s)^{-z}$ has an analytic continuation to the region $\sigma \ge 1 - c_0/(1+\log^+ |\tau|)$, wherein it satisfies the bound $|G(s;z)| \le M(1+|\tau|)^{1-\delta}$. Note that this inequality implies that $G(s;z) \ll M$ uniformly for $|s-1| \le \min\{1,c_0\}$.
\end{definition}

Define a simply connected domain~$\cD$ and parameters~$c_0$ and~$c$ associated to it by
\begin{equation} \label{D and c def}
\cD = \big\{ 1 - c/(1+\log^+ |\tau|) \le \sigma \le 2 \big\} \setminus (-\infty,1], \quad\text{where }
c=c(c_0)=\tfrac{47}{48}c_0.
\end{equation}
(The fraction~$\frac{47}{48}$ is convenient in one step of the proof of Lemma~\ref{Phi' asymptotic lemma} below and is otherwise insignificant, other than being less than~$1$.)
Notice that if $c_0 \le \frac2{11}$, then $c^{-1} > 5.573412$, which implies~\cite[Theorem 1]{MosTru} that $\zeta(s)\ne0$ for $s\in\cD$.

\begin{lemma} \label{F bound lemma}
Suppose that $F(s)$ has the property $\P(z;c_0,\delta,M)$, where $|z|\le A$ and $c_0 \le \frac2{11}$.
Then $F(s)$ has an analytic continuation to $\cD$ which satisfies
\[
F(s) \ll_A \begin{cases}
\delta^{-A} M(1+|\tau|)^{1-\delta/2}, &\text{if } |\tau| \ge 3, \\
M|s-1|^{-A}, &\text{if } |\tau| \le 3.
\end{cases}
\]
\end{lemma}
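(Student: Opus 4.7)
The plan is to factor $F(s) = G(s;z)\,\zeta(s)^z$ and estimate the two factors separately on $\cD$. The property~$\P(z;c_0,\delta,M)$ provides the analytic continuation of $G(s;z)$ to a region strictly containing $\cD$ (since $c = \tfrac{47}{48}c_0 < c_0$), together with the uniform bound $|G(s;z)| \le M(1+|\tau|)^{1-\delta}$ there. Because $c_0 \le \tfrac{2}{11}$ forces $c^{-1} > 5.573412$, the Mossinghoff--Trudgian bound cited right after equation~\eqref{D and c def} gives $\zeta(s)\ne 0$ throughout $\cD$; since $\cD$ is simply connected, the branch of $\log\zeta(s)$ that is real for $s \in (1,2]$ extends analytically to all of $\cD$, so $\zeta(s)^z = e^{z\log\zeta(s)}$ and hence $F(s)$ are analytic on~$\cD$.

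For the range $|\tau|\ge 3$ in $\cD$, the classical zero-free-region estimate $|\log\zeta(s)| \le \log\log|\tau| + O(1)$ (the $\chi=\chi_0$ analogue of the bound invoked in Proposition~\ref{zerofree prop}) combined with Lemma~\ref{w^z lemma} yields
\[
|\zeta(s)^z| \le e^{|z|\,|\log\zeta(s)|} \ll_A (1+\log^+|\tau|)^A.
\]
Lemma~\ref{lil calc lemma} converts this to $\ll_A \delta^{-A}(1+|\tau|)^{\delta/2}$, and multiplying by the hypothesized bound $|G(s;z)|\le M(1+|\tau|)^{1-\delta}$ yields the claimed estimate $|F(s)| \ll_A \delta^{-A}M(1+|\tau|)^{1-\delta/2}$.

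For the range $|\tau|\le 3$ in $\cD$, the factor $|G(s;z)|$ is trivially bounded by $M(1+|\tau|)^{1-\delta} \le 4M$. For the remaining factor we split into subregions. In the disk $|s-1|\le \tfrac12$ we apply Lemma~\ref{Z bound lemma} to write $\zeta(s)^z = s \cdot Z(s;z)\cdot (s-1)^{-z}$ with $|s\cdot Z(s;z)|\ll_A 1$; since $|\arg(s-1)|<\pi$ on $\cD$ and $|s-1|< 1$, we get $|(s-1)^{-z}| = |s-1|^{-\Re z}\,e^{\Im z \cdot \arg(s-1)} \ll_A |s-1|^{-A}$. On the complementary compact set $\{s\in\cD : |\tau|\le 3,\ |s-1|\ge\tfrac12\}$, the function $\zeta(s)$ is bounded and bounded away from zero, so $|\zeta(s)^z|\ll_A 1\ll_A |s-1|^{-A}$ there (using that $|s-1|\le\sqrt{10}$ throughout). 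Both subregions therefore give $|F(s)| \ll_A M|s-1|^{-A}$.

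The main technical care is required in the $|\tau|\ge 3$ case: to land on $\delta^{-A}$ exactly, and not a worse power such as $\delta^{-CA}$, the constant multiplying $\log\log|\tau|$ in the bound for $|\log\zeta(s)|$ must be exactly~$1$, so that the exponent $A$ produced by Lemma~\ref{w^z lemma} is the same exponent that then feeds into Lemma~\ref{lil calc lemma}. Everything else is straightforward bookkeeping about branches of the complex logarithm on the simply connected domain~$\cD$.
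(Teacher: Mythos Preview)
Your proposal is correct and follows essentially the same approach as the paper: factor $F(s)=G(s;z)\zeta(s)^z$, bound $\zeta(s)^z$ by $(1+\log^+|\tau|)^A$ via the logarithmic bound and Lemma~\ref{w^z lemma}, then invoke Lemma~\ref{lil calc lemma} for $|\tau|\ge3$; and for $|\tau|\le3$ bound $\zeta(s)^z$ by $|s-1|^{-A}$ using that $\zeta(s)\sim(s-1)^{-1}$ near $s=1$. The only cosmetic difference is that for $|\tau|\le3$ you split into the disk $|s-1|\le\tfrac12$ (handled via $Z(s;z)$ and Lemma~\ref{Z bound lemma}) and its compact complement, whereas the paper invokes the Laurent expansion $\zeta(s)=1/(s-1)+O(1)$ directly on the whole region---these are the same argument in slightly different packaging.
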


\begin{proof}
The analytic continuation follows from the identity $F(s) = G(s;z)\zeta(s)^z$ and the assumed analytic continuation of $G(s;z)$.

In $\{s\in\cD\colon|\tau|\ge3\}$, we have the upper bound $\zeta(s)^z \ll_A (1+\log^+ |\tau|)^A$~\cite[Part~II, equation~(5.10)]{tenenbaum15}, which by our assumption on~$G(s;z)$ implies
\[
F(s) = G(s;z)\zeta(s)^z \ll_A M (1+|\tau|)^{1-\delta} (1+\log^+ |\tau|)^A \ll_{A} \delta^{-A} M(1+|\tau|)^{1-\delta/2}
\]
by Lemma~\ref{lil calc lemma}.

On the other hand, the fact that $\zeta(s) = 1/(s-1) + O(1)$ in any compact neighborhood of $s=1$ implies that 
$|\zeta(s)^z| \ll_A |\zeta(s)|^A \ll_A |s-1|^{-A}$ in $\{s\in\cD\colon|\tau|\le3\}$ (as can be derived from Lemma~\ref{w^z lemma}). Again by assumption, $G(s;z) \ll M (1+|\tau|)^{1-\delta} \ll M$ when $|\tau|\le3$ (since $\delta>0$). These bounds establish the second case of the lemma.
\end{proof}

\begin{definition} \label{type T def}
Let $z$ and $w$ be complex numbers, and let $c_0$, $\delta$, and $M$ be positive real numbers with $\delta\le1$. A Dirichlet series $F(s) = \sum_{n=1}^\infty a_n n^{-s}$ {\em has type $\T(z,w;c_0,\delta,M)$} if it has the property $\P(z;c_0,\delta,M)$ and there exist nonnegative real numbers $b_n$, with $|a_n| \le b_n$ for all $n\ge1$, such that the Dirichlet series $\sum_{n=1}^\infty b_n n^{-s}$ has the property $\P(w;c_0,\delta,M)$ as in Definition~\ref{property P def}.

Note that when the $a_n$ are nonnegative real numbers, if $F(s)$ has the property $\P(z;c_0,\delta,M)$ then it automatically has type $\T(z,z;c_0,\delta,M)$. (We remark that Tenenbaum has ``positive'' in place of ``nonnegative'' in both places in this definition, but strict positivity is never used.)
\end{definition}

\begin{definition} \label{Gamma def}
For $x>e^{1/c}=e^{48/(47c_0)}$, we define
\begin{equation} \label{Phi def}
\Phi(x) = \frac{1}{2\pi i} \int_\Gamma F(s) x^{s+1} \frac{ds}{s(s+1)}
\end{equation}
where $\Gamma$ is a truncated Hankel contour (see Figure~$1$), consisting of a circle around $s=1$ with radius $1/(2\log x)$ and two line segments, above and below the branch cut, joining $1-c/2$ to $1-1/(2\log x)$; note that $\Gamma$ is entirely contained in the domain~$\cD$ defined in equation~\eqref{D and c def}.
\end{definition}

\begin{figure}[ht] \label{contour figure}
\includegraphics[height=4in]{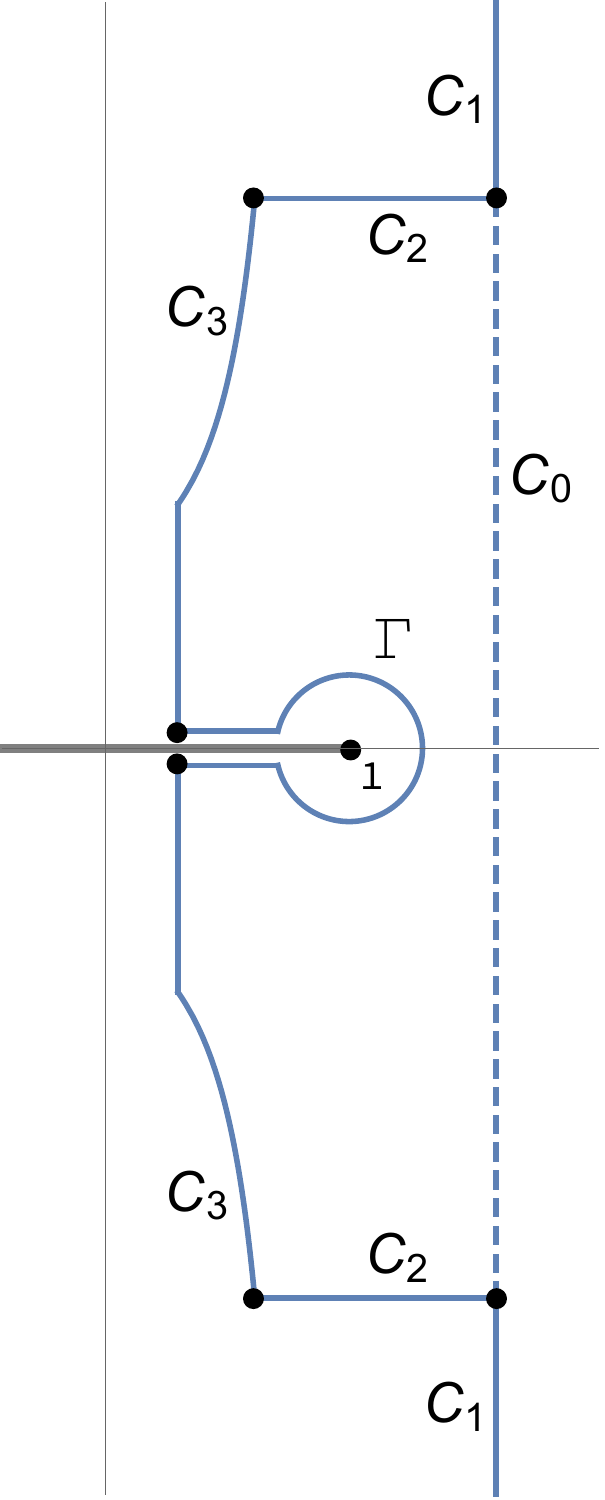}
\caption{The truncated Hankel contour $\Gamma$, and the contour of integration in Lemma~\ref{A integral lemma}}
\end{figure}

\begin{lemma} \label{Phi 2nd deriv lemma}
Suppose that $F(s)$ has the property $\P(z;c_0,\delta,M)$, where $|z|\le A$ and $c_0 \le \frac2{11}$. Then $\Phi''(x) \ll_A M(\log x)^A$ uniformly for $x>e^{48/(47c_0)}$.
\end{lemma}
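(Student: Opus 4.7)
The plan is to differentiate under the integral in equation~\eqref{Phi def} and then estimate the resulting integral piecewise, using the bound on $F(s)$ provided by Lemma~\ref{F bound lemma}. Differentiating twice, the factors $(s+1)$ and $s$ coming out of $x^{s+1}$ cancel the $1/(s(s+1))$, so
\[
\Phi''(x) = \frac{1}{2\pi i} \int_\Gamma F(s) x^{s-1} \, ds;
\]
the interchange of derivative and integral is trivially justified since $\Gamma$ is compact and $F$ is continuous on it.

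Next, I split $\Gamma$ into the small (nearly full) circular arc $\Gamma_\circ$ of radius $1/(2\log x)$ around $s=1$ and the two horizontal segments $\Gamma_\pm$ joining $1-c/2$ to $1-1/(2\log x)$ on the two sides of the branch cut. The hypothesis $x>e^{48/(47c_0)}=e^{1/c}$ ensures $c/2>1/(2\log x)$, so the segments have positive length, and one has $|\tau|\le 1/(2\log x)<3$ on all of $\Gamma$. Thus the second case of Lemma~\ref{F bound lemma} applies throughout $\Gamma$, giving $|F(s)|\ll_A M|s-1|^{-A}$.

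On $\Gamma_\circ$ the quantities $|s-1|=1/(2\log x)$, $|x^{s-1}|\le x^{1/(2\log x)}=e^{1/2}$, and arc-length $O(1/\log x)$ multiply to a contribution of size $O_A(M(\log x)^{A-1})$. On each $\Gamma_\pm$ I parametrize by $\sigma$ and substitute $u=(1-\sigma)\log x$ to convert the bound $|F(\sigma\pm i0)x^{\sigma-1}|\ll_A M(1-\sigma)^{-A}x^{\sigma-1}$ into
\[
M(\log x)^{A-1}\int_{1/2}^{c(\log x)/2} u^{-A}e^{-u}\,du\ll_A M(\log x)^{A-1},
\]
since the tail integral is uniformly bounded in $x$ (crudely, $u^{-A}\le 2^A$ for $u\ge 1/2$ when $A\ge 0$, and $e^{-u}$ is integrable). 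Combining the three pieces yields $\Phi''(x)\ll_A M(\log x)^{A-1}$, which is stronger than the claimed bound $M(\log x)^A$.

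There is no real obstacle here: this is a standard Hankel-contour computation, and the only item requiring care is checking that $\Gamma$ lies in the region where one may invoke the polynomial-in-$|s-1|^{-1}$ case of Lemma~\ref{F bound lemma} rather than the $|\tau|$-growth case, which is immediate from the geometry of $\Gamma$ and the condition $x>e^{1/c}$.
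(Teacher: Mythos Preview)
Your proof is correct and follows essentially the same approach as the paper: differentiate under the integral sign to obtain $\Phi''(x)=\frac{1}{2\pi i}\int_\Gamma F(s)x^{s-1}\,ds$, then invoke the second case of Lemma~\ref{F bound lemma} on all of~$\Gamma$. The paper simply bounds the integrand by its supremum $M(2\log x)^A e^{1/2}$ and multiplies by the length $O(1)$ of~$\Gamma$, whereas your piecewise estimate with the substitution $u=(1-\sigma)\log x$ on the segments gains an extra factor of $1/\log x$; this sharper bound $M(\log x)^{A-1}$ is not needed downstream, but it is correct.
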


\begin{proof}
Differentiating equation~\eqref{Phi def} under the integral sign yields
\begin{equation} \label{Phi derivatives}
\Phi'(x) = \frac{1}{2\pi i} \int_\Gamma F(s) x^s \,\frac{ds}{s} \quad\text{and}\quad \Phi''(x) = \frac{1}{2\pi i} \int_\Gamma F(s) x^{s-1} \,ds
\end{equation}
for $x>0$. The length of the contour $\Gamma$ is $O(1)$, and for $s\in\Gamma$ we have $|x^{s-1}|\le e^{1/2}$ and $F(s) \ll M |s-1|^A$ by Lemma~\ref{F bound lemma}, so that
\[
\Phi''(x) \ll |F(s)| \ll_{A} M|s-1|^{-A} \ll_A M(\log x)^A
\]
since $|s-1| \le 2\log x$.
\end{proof}

\begin{lemma} \label{Phi difference lemma}
Suppose that $F(s)$ has the property $\P(z;c_0,\delta,M)$, where $|z|\le A$ and $c_0 \le \frac2{11}$. Then
uniformly for $x\ge e^{2/c_0}$ and $|h| \le \frac x2$,
\[
\Phi(x+h) - \Phi(x) = h\Phi'(x) + O_A(Mh^2 (\log x)^A).
\]
\end{lemma}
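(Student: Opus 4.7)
\bigskip
\noindent\textbf{Proof proposal.} The natural route is Taylor's theorem with integral remainder applied to $\Phi$, using the bound on $\Phi''$ already supplied by Lemma~\ref{Phi 2nd deriv lemma}. Concretely, I would write
\[
\Phi(x+h) - \Phi(x) - h\Phi'(x) = \int_0^h (h-u)\,\Phi''(x+u)\,du
\]
(valid for $h$ of either sign, with the usual convention reversing the limits when $h<0$), and then estimate the integrand uniformly for $u$ between $0$ and $h$.

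The only mild subtlety is verifying that Lemma~\ref{Phi 2nd deriv lemma} genuinely applies at every point $x+u$ in the range of integration, and that its conclusion can be rephrased in terms of $\log x$ rather than $\log(x+u)$. Since $|h|\le x/2$, the variable $t=x+u$ satisfies $x/2 \le t \le 3x/2$. The hypothesis $x\ge e^{2/c_0}$ together with $c_0\le \frac2{11}$ (assumed throughout this appendix so that $\cD$ is zero-free) implies $t\ge x/2 \ge \tfrac12 e^{2/c_0} > e^{48/(47c_0)}$, since $\tfrac{2}{c_0}-\tfrac{48}{47c_0} = \tfrac{46}{47c_0} \ge \tfrac{46\cdot 11}{47\cdot 2} > \log 2$. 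Thus Lemma~\ref{Phi 2nd deriv lemma} yields $\Phi''(t)\ll_A M(\log t)^A$, and $\log t = \log x + O(1)$ on this range gives $(\log t)^A \ll_A (\log x)^A$.

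Assembling these pieces,
\[
\bigl|\Phi(x+h)-\Phi(x)-h\Phi'(x)\bigr| \le \biggl|\int_0^{h}(h-u)\,\Phi''(x+u)\,du\biggr| \ll_A M(\log x)^A \int_0^{|h|}(|h|-u)\,du = \tfrac12 M h^2(\log x)^A,
\]
which is the desired estimate. No step here presents a real obstacle; the one thing worth being careful about is checking that the lower endpoint $x+u\ge x/2$ still lies in the range of validity of Lemma~\ref{Phi 2nd deriv lemma}, which is exactly where the hypothesis $x\ge e^{2/c_0}$ (rather than merely $x>e^{48/(47c_0)}$) is used.
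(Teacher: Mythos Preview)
Your proof is correct and is essentially identical to the paper's: the paper invokes the Taylor remainder formula $\Phi(x+h)-\Phi(x)=h\Phi'(x)+h^2\int_0^1(1-t)\Phi''(x+ht)\,dt$ (which is your formula after the substitution $u=ht$) together with Lemma~\ref{Phi 2nd deriv lemma}, noting exactly as you do that $x\ge e^{2/c_0}$ and $c_0\le\frac2{11}$ force $x+h\ge x/2>e^{48/(47c_0)}$. Your justification of this last inequality is in fact more detailed than the paper's.
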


\begin{proof}
The assertion follows immediately from the integration-by-parts formula
$
\Phi(x+h) - \Phi(x) = h\Phi'(x) + h^2 \int_0^1 (1-t) \Phi''(x+ht)\,dt
$
and Lemma~\ref{Phi 2nd deriv lemma} (noting that the assumption $x\ge e^{2/c_0}$ implies that $x+h \ge \frac x2 > e^{48/(47c_0)}$ since $c_0 \le \frac2{11}$).
\end{proof}

\subsection{Derivation of the Selberg--Delange formula}

Throughout this section, we write $a_n$ for the coefficients of the Dirichlet series~$F(s) = \sum_{n=1}^\infty a_nn^{-s}$, and we define the summatory function $A(t) = \sum_{n\le t} a_n$. The next three lemmas contain the majority of the work in establishing our version of the Selberg--Delange formula at the end of this section.

\begin{lemma} \label{A integral lemma}
Suppose that $F(s)$ has the property $\P(z;c_0,\delta,M)$, where $|z|\le A$ and $c_0 \le \frac2{11}$.
Then for $x>e^{1/c}$,
\[
\int_0^x A(t) \, dt = \Phi(x) + O_A\big( (\delta^{-A-1}+c_0^{-A}) Mx^2 e^{-\frac12\sqrt{c\delta\log x}} \big).
\]
\end{lemma}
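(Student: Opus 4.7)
The plan is to apply a twice-integrated Perron formula and then deform the contour of integration leftward, picking up $\Phi(x)$ as the contribution along the Hankel contour and estimating the remaining arcs via Lemma~\ref{F bound lemma}. I would begin by writing
\[
\int_0^x A(t)\,dt = \sum_{n \le x} a_n(x - n) = \frac{1}{2\pi i}\int_{\alpha - i\infty}^{\alpha + i\infty} F(s) \frac{x^{s+1}}{s(s+1)}\,ds,
\]
valid for any $\alpha$ in the region of absolute convergence of $F$; the factor $1/(s(s+1))$ renders the integral absolutely convergent, so no truncation is required at this step. Using the analytic continuation of $F$ to $\cD$ furnished by Lemma~\ref{F bound lemma}, I would then deform the vertical line leftward---crossing no singularities of the integrand---into a contour consisting of the truncated Hankel contour $\Gamma$ of Definition~\ref{Gamma def}, two horizontal segments at heights $\Im s = \pm T$ for a free parameter $T$ to be chosen, two short vertical segments at $\Re s = 1 - c/(1 + \log T)$ running from $|\Im s| = T$ down to the endpoints of $\Gamma$, and the truncation tails $\Re s = \alpha$ with $|\Im s| > T$. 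The integral along $\Gamma$ equals $\Phi(x)$ by definition, so every other contribution must be absorbed into the error.

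The next step is to estimate each connecting piece using Lemma~\ref{F bound lemma}. On the truncation tails $\Re s = \alpha$ with $|\tau| > T$, the bound $|F(s)| \ll_A \delta^{-A} M(1+\tau)^{1-\delta/2}$ combined with $|x^{s+1}|/|s(s+1)| \ll x^2/\tau^2$ integrates to $\ll \delta^{-A-1} Mx^2 T^{-\delta/2}$. On each horizontal arc at $|\Im s| = T \ge 3$, the same bound on $F$ and $|x^{s+1}|/|s(s+1)| \le x^{1+\alpha}/T^2$, integrated over $\sigma \in [1 - c/(1+\log T),\,\alpha]$, yields $\ll_A \delta^{-A} Mx^2 T^{-1-\delta/2}\log x$, which is dominated by the tail contribution above. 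On the two left vertical segments at $\Re s = 1 - c/(1+\log T)$, the dominant factor is $|x^{s+1}| \le x^2 \exp(-c\log x/(1+\log T))$; in the range $|\tau| \ge 3$ the polynomial bound for $F$ together with $\int_3^T (1+\tau)^{1-\delta/2}/\tau^2\,d\tau \ll \delta^{-1}$ produces a term of order $\delta^{-A-1} Mx^2 \exp(-c\log x/(1+\log T))$, while for $|\tau| \le 3$ the endpoint bound $|F(s)| \ll_A M|s-1|^{-A}$ gives a factor $c_0^{-A}$ because the left endpoint of the segment lies at distance $\asymp c_0$ from $s = 1$ along the real axis, contributing a term of order $c_0^{-A} Mx^2 \exp(-c\log x/(1+\log T))$.

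Finally, $T$ is chosen to balance the tail/horizontal error $T^{-\delta/2}$ against the vertical-arc decay $\exp(-c\log x/\log T)$. Setting $\log T$ of order $\sqrt{(\log x)/(c\delta)}$ renders both errors of size $\exp(-\tfrac12\sqrt{c\delta\log x})$, up to harmless polynomial factors in $\log x$ that are absorbed into the exponential, yielding the claimed bound $(\delta^{-A-1} + c_0^{-A}) Mx^2 \exp(-\tfrac12\sqrt{c\delta\log x})$. The principal obstacle is bookkeeping: Tenenbaum's version hides the dependence on $c_0$ and $\delta$ inside implicit constants, so the careful task here is to expose the polynomial-in-$\delta^{-1}$ growth in Lemma~\ref{F bound lemma} wherever it enters, to track the extra $\delta^{-1}$ arising from $\tau$-integration on the left vertical arcs, and to identify the $c_0^{-A}$ contribution coming from the near-$s=1$ portion of those arcs where the Hankel contour meets the left boundary at distance $\asymp c_0$ from the singularity.
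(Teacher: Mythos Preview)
Your strategy is exactly the paper's: twice-integrated Perron formula, deform to the Hankel contour plus connecting arcs, bound the arcs via Lemma~\ref{F bound lemma}, and optimize~$T$. But two details of your contour need repair. First, a vertical segment at $\Re s = 1 - c/(1+\log T)$ does not meet the endpoints of~$\Gamma$, which by Definition~\ref{Gamma def} lie at $1 - c/2$ on the real axis; the contour you describe does not close. Second, on such a vertical line the closest approach to $s=1$ is at distance $c/(1+\log T)$, not $\asymp c_0$, so the second case of Lemma~\ref{F bound lemma} there produces a factor $((1+\log T)/c)^A$ rather than the clean $c_0^{-A}$ you claim. The paper handles both issues at once by replacing your vertical line with the curved arc $C_3 = \{\sigma(\tau) + i\tau : 0^+ \le \tau \le T\}$ where $\sigma(\tau) = 1 - \tfrac{c/2}{1+\log^+\tau}$: this lands exactly at $1-c/2$ as $\tau\to0^+$, and for $|\tau|\le1$ it satisfies $|s-1|\ge c/2$, whence $\int_0^3 (c/2+\tau)^{-A}(1+\tau)^{-2}\,d\tau \ll_A c^{-A}$ gives the stated factor directly. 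Finally, your balance $\log T \asymp \sqrt{(\log x)/(c\delta)}$ has the wrong power of~$c$: equating $\tfrac{\delta}{2}\log T$ with $c\log x/\log T$ gives $\log T \asymp \sqrt{c\log x/\delta}$, which is the paper's choice $T = e^{\sqrt{(c/\delta)\log x}}$.
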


\begin{proof}
By Perron's formula, we know that
\[
\int_0^x A(t) \, dt = \int_{(1+1/\log x)} F(s) x^{s+1} \frac{ds}{s(s+1)},
\]
where the contour integral is over the vertical line $\Re s = 1+1/\log x$ (the union of the segment $C_0$ and the two rays $C_1$ in Figure~$1$). 
Define $\sigma(t) = 1 - \frac c2/(1+\log^+ t)$.
By Cauchy's theorem, for any $T\ge3$ we may modify the path of integration into the union of:
\begin{itemize}
\item the vertical ray extending upward from $1+1/\log x+iT$ and its reflection in the real axis (labeled $C_1$ in Figure~$1$);
\item the horizontal segment from $\sigma(T)+iT$ to $1+1/\log x+iT$ and its reflection (labeled $C_2$);
\item the parametric curve $\sigma(\tau)+i\tau$ for $0^+\le\tau\le T$ and its reflection (labeled $C_3$);
\item the truncated Hankel contour $\Gamma$ from Definition~\ref{Gamma def}.
\end{itemize}
Note that this modified contour is entirely contained within the domain~$\cD$ defined in equation~\eqref{D and c def}.

When the first case of Lemma~\ref{F bound lemma}, which makes explicit the dependence of the upper bound on~$\delta$, is used in Tenenbaum's argument in place of~\cite[Part~II, equation~(5.19)]{tenenbaum15}, the resulting estimate for the contributions of $C_1$ and $C_2$ to the modified contour integral is $O_A(\delta^{-A}Mx^2T^{-\delta/2})$.

Tenenbaum's argument for the contribution of $C_3$ to the modified contour integral may be used for $|\tau|\ge3$, yielding an upper bound of $O_A(\delta^{-A-1}Mx^{1+\sigma(T)})$. However, to make explicit the dependence upon $c$ and hence $c_0$, we must use the second case of Lemma~\ref{F bound lemma} for the portion of $C_3$ with $|\tau|\le3$: we obtain
\begin{align*}
\ll_A \int M|s-1|^{-A} |x^{s+1}| \frac{|ds|}{|s^2|} &\ll M x^{2-c/5} \int_0^3 \frac{(c/2+\tau)^{-A}}{(1+\tau)^2} \,d\tau \\
&\ll_A M x^{2-c/5} \bigg( \int_0^c \frac{c^{-A}}{(1+\tau)^2} \,d\tau + \int_c^3 \frac{(1+\tau)^{-A}}{(1+\tau)^2} \,d\tau \bigg) \\
&\ll_A M x^{2-c/5} c^{-A} \ll_A M x^{2-c/5} c_0^{-A}.
\end{align*}
Summarizing the proof thus far, we have shown that
\begin{align}
\int_0^x A(t) \, dt &= \int_\Gamma \frac{F(s) x^{s+1}\, ds}{s(s+1)} + \int_{C_1\cup C_2} \frac{F(s) x^{s+1}\, ds}{s(s+1)} + \int_{C_3} \frac{F(s) x^{s+1}\, ds}{s(s+1)} \notag \\
&= \Phi(x) + O_A\big( \delta^{-A}Mx^2T^{-\delta/2}  + ( \delta^{-A-1}Mx^{1+\sigma(T)} + M x^{2-c/5} c_0^{-A} ) \big).
\label{our sigma plug in}
\end{align}
As Tenenbaum does, we choose $T = e^{\sqrt{(c/\delta)\log x}}$, so that 
\[
\sigma(T) = 1 - \frac{c/2}{1+\sqrt{(c/\delta)\log x}} \le \begin{cases}
1, &\text{if } x < e^{7\delta/c}, \\
1-\frac12\sqrt{\frac{c\delta}{\log x}} + \frac\delta{2\log x}, &\text{if } x \ge e^{7\delta/c}
\end{cases}
\]
(where the second case of the inequality is an easy calculus exercise).
In the case $x \ge e^{7\delta/c}$, the error term in equation~\eqref{our sigma plug in} becomes
\begin{equation} \label{expression}
O_A( \delta^{-A}Mx^2 e^{-\frac12\sqrt{c\delta\log x}} + \delta^{-A-1}Mx^2 e^{-\frac12\sqrt{c\delta\log x}} e^{\delta/2} + M x^{2-c/5} c_0^{-A} ).
\end{equation}
Since $\delta\le1$ we have $e^{\delta/2}\ll1$, and we also have $x^{-c/5} \le e^{-\frac12\sqrt{c\delta\log x}}$ for $x \ge e^{7\delta/c}$; thus the above expression simplifies to
\[
O_A\big( (\delta^{-A-1}+c_0^{-A}) Mx^2 e^{-\frac12\sqrt{c\delta\log x}} \big).
\]
But this bound for the expression~\eqref{expression} also holds for $x < e^{7\delta/c}$, as $x^{-c/5} < 1 \ll e^{-\frac12\sqrt{c\delta\log x}}$ in this range.
\end{proof}

We continue to follow the structure of Tenenbaum's proof; however, in the next proposition we give a somewhat different form for the error term, to simplify the dependence upon~$c_0$.

\begin{lemma} \label{Phi' asymptotic lemma}
Suppose that $F(s)$ has the property $\P(z;c_0,\delta,M)$, where $|z|\le A$ and $c_0 \le \frac2{11}$.
For $x>e^{48/(47c_0)}$ and for any $N\in\N$,
\[
\Phi'(x) = x(\log x)^{z-1} \bigg( \sum_{k=0}^N \frac{\lambda_k(z)}{(\log x)^k} + O_A \bigg( M\Gamma(N+A+2)\bigg( \frac{48}{c_0\log x} \bigg)^{\!\!N+1\,} \bigg) \bigg),
\]
where the functions~$\lambda_k(z)$ are defined in equation~\eqref{lambda_k def} below.
\end{lemma}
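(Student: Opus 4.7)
The plan is to compute the contour integral
\[
\Phi'(x) = \frac{1}{2\pi i}\int_\Gamma F(s)\,x^s\,\frac{ds}{s}
\]
from equation~\eqref{Phi derivatives} by Taylor-expanding the integrand at $s=1$ and evaluating each resulting piece via Hankel's representation of $1/\Gamma$. The key identity is $F(s)/s = H(s)(s-1)^{-z}$, where $H(s) := G(s;z)Z(s;z)$, which follows directly from the definitions of $G(s;z)$ and $Z(s;z) = ((s-1)\zeta(s))^z/s$. By Definition~\ref{property P def} and Lemma~\ref{Z bound lemma}, $H$ is analytic in a full neighbourhood of $s=1$ contained in $\cD\cup\{1\}$ (the branch choice for $Z$ removes the apparent slit at $s=1$), and $|H(s)| \ll_A M$ on any closed disk $|s-1|\le\rho$ lying in that neighbourhood.

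First I would write $H(s) = \sum_{k=0}^N \mu_k(z)(s-1)^k + R_N(s)$ via Cauchy's integral formula on a circle $|s-1|=\rho$, choosing $\rho$ just larger than $c/2 = 47c_0/96$ (the maximum value of $|s-1|$ on $\Gamma$), say $\rho = c_0/2$, so that $|s-1|/\rho\le 47/48$ on $\Gamma$. This gives $|\mu_k(z)|\ll_A M\rho^{-k}$ and the geometric-series remainder bound $|R_N(s)|\ll_A M\rho^{-(N+1)}|s-1|^{N+1}$ on $\Gamma$, with a constant factor $(1-47/48)^{-1} = 48$. Then for each main-series term, the substitution $s = 1 + w/\log x$ transforms the contour integral into
\[
\frac{\mu_k(z)}{2\pi i}\int_\Gamma (s-1)^{k-z}x^s\,ds = \mu_k(z)\,x(\log x)^{z-1-k}\cdot\frac{1}{2\pi i}\int_{\Gamma_x}w^{k-z}e^w\,dw,
\]
where $\Gamma_x := (\log x)(\Gamma - 1)$ is a Hankel-shaped contour around $0$ reaching leftward to $\Re w = -c(\log x)/2$. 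Closing $\Gamma_x$ into the full Hankel contour $\mathcal{H}$ introduces only an exponentially small error $O_A(x^{-c/2})$, and Hankel's formula $(2\pi i)^{-1}\int_{\mathcal{H}} w^{k-z}e^w\,dw = 1/\Gamma(z-k)$ then yields the $k$-th main-term coefficient $\lambda_k(z) := \mu_k(z)/\Gamma(z-k)$.

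The main obstacle is making the remainder bound explicit in $c_0$. The same substitution applied to $(2\pi i)^{-1}\int_\Gamma R_N(s)(s-1)^{-z}x^s\,ds$ produces the absolute-value estimate
\[
\frac{x(\log x)^{\Re z-1}}{2\pi}\cdot 48\,M\rho^{-(N+1)}(\log x)^{-(N+1)}\int_{\Gamma_x}|w|^{N+1-\Re z}e^{\Re w}\,|dw|.
\]
Since $\int_{\mathcal{H}}|w|^\alpha e^{\Re w}\,|dw|\ll\Gamma(\alpha+1)$ and $|z|\le A$ forces $N+2-\Re z \le N+A+2$, the inner integral is $O(\Gamma(N+A+2))$ uniformly (the tail outside $\Gamma_x$ again being exponentially small). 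Combining this with $\rho^{-(N+1)} = (2/c_0)^{N+1}$ and the geometric factor $48$, and then recombining $48\cdot 2^{N+1}/c_0^{N+1}$ into the single base $48/c_0$ (at the cost of an absolute constant absorbed into the $O_A$), produces the stated error $M\Gamma(N+A+2)(48/(c_0\log x))^{N+1}$ and completes the proof.
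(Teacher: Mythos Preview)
Your approach is essentially the paper's: write $F(s)/s = H(s)(s-1)^{-z}$ with $H=GZ$, Taylor-expand $H$ about $s=1$ on a disk of radius just exceeding $c/2$, evaluate each $\int_\Gamma (s-1)^{k-z}x^s\,ds$ via the substitution $w=(s-1)\log x$ and Hankel's formula, and bound the Taylor remainder. Your choice $\rho=c_0/2$ (so that $|s-1|/\rho\le 47/48$ on $\Gamma$) and the geometric bound $|R_N(s)|\le 48M'(|s-1|/\rho)^{N+1}$ are correct, and your estimate of the $R_N$-contribution via $\int_{\Gamma_x}|w|^{N+1-\Re z}e^{\Re w}\,|dw|\ll\Gamma(N+A+2)$ is valid and in fact slightly more direct than the paper's treatment of that piece.

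There is, however, one genuine gap. For each main-series term $0\le k\le N$ you assert that ``closing $\Gamma_x$ into the full Hankel contour $\mathcal H$ introduces only an exponentially small error $O_A(x^{-c/2})$''. This is \emph{not} uniform in $k$: the closure error is governed by
\[
\int_{c(\log x)/2}^{\infty} t^{\,k-\Re z}e^{-t}\,dt,
\]
which is $O_A(e^{-c(\log x)/2})$ only when $k+A$ is small compared with $c(\log x)/2$; for larger $k$ it can be as large as $\Gamma(k+A+1)$. Since $N$ is arbitrary, you cannot bury this $k$-dependence inside an $O_A$ constant. The paper confronts this head-on: quoting Tenenbaum's Corollary~II.0.18 it obtains, for the $k$-th term, an error $47^{|z-k|}\Gamma(1+|z-k|)e^{-c(\log x)/4}$, multiplies by $|\mu_k(z)|(\log x)^{-k}\ll_A Mc^{-k}(\log x)^{-k}$, and then \emph{sums} the resulting series
\[
\sum_{k=0}^{N}\Big(\tfrac{47}{c\log x}\Big)^{k}\Gamma(k+A+1),
\]
showing via a ratio-comparison argument that it is bounded by $\Gamma(N+A+1)\big(47/(c\log x)\big)^{N+1}x^{c/4}$. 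That summation is precisely where the $c_0$-dependence becomes delicate, and it is the step your sketch skips. Once you insert this computation, your argument and the paper's coincide.
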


\begin{proof}
Since $F(s) = G(s;z) \zeta(s)^z$ by assumption, Definition~\ref{Z def} and equation~\eqref{Phi derivatives} imply that
\[
\Phi'(x) = \frac1{2\pi i} \int_\Gamma \frac{G(s;z)Z(s;z)x^s}{(s-1)^z} \,ds.
\]
Lemma~\ref{Z bound lemma} and the last remark of Definition~\ref{property P def} imply that $G(s;z)Z(s;z) \ll_A M$ uniformly for $|s-1|\le\frac2{11}$ and thus for $|s-1| \le c$.
If we define $\mu_k(z) = \Gamma(z-k)\lambda_k(z)$ (compare to~\cite[Part~II, equation~(5.24)]{tenenbaum15} and the preceding equation),
it follows immediately that $\mu_k(z) \ll_A Mc^{-k}$ for all $k\ge0$.
The derivation of~\cite[Part~II, equation~(5.25)]{tenenbaum15} and the following display proceeds with no changes other than clarifying the dependence on the parameters, yielding
\begin{equation} \label{first Phi' expansion}
\Phi'(x) = \sum_{k=0}^N \mu_k(z) \frac1{2\pi i} \int_\Gamma x^s(s-1)^{k-z} \,ds + O_A\big( Mc^{-N} x(\log x)^{\Re z-N-2} \Gamma(N+A+2) \big).
\end{equation}
The subsequent change of variables $w=(s-1)\log x$ leads, using~\cite[Part~II, Corollary~0.18]{tenenbaum15},~to
\begin{align*}
\frac1{2\pi i} \int_\Gamma x^s(s-1)^{k-z} \,ds &= \frac x{2\pi i}(\log x)^{z-1-k} \int_{\mathcal H(\frac12c\log x)} w^{k-z}e^w\,dw \\
&= x(\log x)^{z-1-k} \bigg( \frac1{\Gamma(z-k)} + O\big( 47^{|z-k|} \Gamma(1+|z-k|) e^{-\frac14c\log x} \big) \bigg).
\end{align*}
Using this identity in equation~\eqref{first Phi' expansion} gives
\begin{align} \label{second Phi' expansion}
\Phi'(x) &= \sum_{k=0}^N \mu_k(z) x(\log x)^{z-1-k} \bigg( \frac1{\Gamma(z-k)} + O\big( x^{-c/4} 47^{|z-k|} \Gamma(1+|z-k|) \big) \bigg) \notag \\
&\qquad{}+ O_A\big( Mc^{-N} x(\log x)^{\Re z-N-2} \Gamma(N+A+2) \big) \notag \\
&= x(\log x)^{z-1} \bigg( \sum_{k=0}^N \frac{\lambda_k(z)}{(\log x)^k}  + O\bigg( x^{-c/4} \sum_{k=0}^N \frac{|\mu_k(z)|}{(\log x)^k} 47^{|z-k|} \Gamma(1+|z-k|) \bigg) \bigg) \notag \\
&\qquad{}+ O_A\big( Mc^{-N} x(\log x)^{\Re z-N-2} \Gamma(N+A+2) \big) \notag \\
&= x(\log x)^{z-1} \bigg\{ \sum_{k=0}^N \frac{\lambda_k(z)}{(\log x)^k}  + O_A\bigg( Mx^{-c/4} \sum_{k=0}^N \bigg( \frac{47}{c\log x} \bigg)^{\!\!k\,} \Gamma(k+A+1) \bigg) \notag \\
&\qquad{}+ O_A\big( Mc^{-N} (\log x)^{-N-1} \Gamma(N+A+2) \big) \bigg\}.
\end{align}
Since $\Gamma(N+A+1) = (N+A)(N+A-1)\cdots(k+A+1)\Gamma(k+A+1)$, the sum in the error term above can be estimated (changing indices $j=N-k$) by
\begin{align*}
\sum_{k=0}^N \bigg( \frac{47}{c\log x} \bigg)^{\!\!k\,} \Gamma(k+A+1) &= \Gamma(N+A+1) \bigg( \frac{47}{c\log x} \bigg)^{\!\!N\,} \sum_{j=0}^N \bigg( \frac{c\log x}{47} \bigg)^{\!\!j\,} \prod_{i=1}^{j} \frac1{N-j+A+i} \\
&\le \Gamma(N+A+1) \bigg( \frac{47}{c\log x} \bigg)^{\!\!N\,} \sum_{j=0}^\infty \bigg( \frac{c\log x}{47} \bigg)^{\!\!j\,} \prod_{i=1}^{j} \frac1{i} \\
&= \Gamma(N+A+1) \bigg( \frac{47}{c\log x} \bigg)^{\!\!N\,} e^{(c\log x)/47} \\
&< \Gamma(N+A+1) \bigg( \frac{47}{c\log x} \bigg)^{\!\!N+1\,} x^{c/4},
\end{align*}
where the last inequality is equivalent to
%$x^{c/47} \log x < \frac{188}{43ec} x^{c/4}$
$x^{c/47} < 47 x^{c/4} / (c\log x)$
which can be verified for $x>e^{48/(47c_0)}=e^{1/c}$ by an easy calculus exercise.
Therefore equation~\eqref{second Phi' expansion} becomes
\begin{align*}
\Phi'(x) &= x(\log x)^{z-1} \bigg\{ \sum_{k=0}^N \frac{\lambda_k(z)}{(\log x)^k} + O_A\bigg( M\Gamma(N+A+1) \bigg( \frac{47}{c\log x} \bigg)^{\!\!N+1\,} \bigg) \\
&\qquad{}+ O_A\big( Mc^{-N} (\log x)^{-N-1} \Gamma(N+A+2) \big) \bigg\}, % \\
%&= x(\log x)^{z-1} \bigg( \sum_{k=0}^N \frac{\lambda_k(z)}{(\log x)^k} + O_A\bigg( M\Gamma(N+A+2) \bigg( \frac{48}{c_0\log x} \bigg)^{\!\!N+1\,} \bigg) \bigg).
\end{align*}
which implies the statement of the lemma.
\end{proof}

\begin{lemma} \label{A difference integral lemma}
Suppose that the Dirichlet series $F(s) = \sum_{n=1}^\infty a_n n^{-s}$ has type $\T(z,w;c_0,\delta,M)$, where $|z|\le A$ and $|w|\le A$ and $c_0 \le \frac2{11}$. For any $x\ge e^{2/c_0}$ and $0\le h\le \frac x2$,
\[
\int_x^{x+h} |A(t)-A(x)| \,dt \ll_A Mh^2 (\log x)^A + (\delta^{-A-1}+c_0^{-A}) Mx^2 e^{-\frac12\sqrt{c\delta\log x}}.
\]
\end{lemma}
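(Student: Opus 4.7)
The plan is to reduce everything to the monotone auxiliary summatory function $B(t) = \sum_{n\le t} b_n$, where $(b_n)$ is the majorizing sequence guaranteed by the hypothesis that $F$ has type $\T(z,w;c_0,\delta,M)$. Since $|a_n|\le b_n$, for $t\ge x$ we have the pointwise bound
\[
|A(t) - A(x)| = \bigg|\sum_{x<n\le t} a_n\bigg| \le \sum_{x<n\le t} b_n = B(t) - B(x),
\]
so it suffices to bound $\int_x^{x+h}(B(t) - B(x))\,dt$. The key trick is to exploit the monotonicity of $B$: since $B(t)\le B(x)$ on $[x-h,x]$, we have $hB(x) \ge \int_{x-h}^{x} B(t)\,dt$, and hence
\[
\int_x^{x+h}\bigl(B(t) - B(x)\bigr)\,dt \;\le\; \int_x^{x+h} B(t)\,dt - \int_{x-h}^{x} B(t)\,dt.
\]
This rewrites the quantity of interest as a second difference, which should have order~$h^2$.

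Next I would introduce the analogue $\Psi(x)$ of $\Phi(x)$ defined by equation~\eqref{Phi def} with $\sum_n b_n n^{-s}$ in place of $F(s)$. By the definition of type $\T(z,w;c_0,\delta,M)$, the Dirichlet series $\sum_n b_n n^{-s}$ has the property $\P(w;c_0,\delta,M)$, so Lemma~\ref{A integral lemma} (with $w$ playing the role of $z$) gives
\[
\int_0^T B(t)\,dt = \Psi(T) + O_A\bigl((\delta^{-A-1}+c_0^{-A})\,M T^2\, e^{-\frac12\sqrt{c\delta\log T}}\bigr).
\]
Telescoping this identity at $T=x+h$, $T=x$, and $T=x-h$, and using $\tfrac{x}{2}\le x\pm h\le \tfrac{3x}{2}$ to absorb the three errors into a single one of the same form at~$x$, yields
\[
\int_x^{x+h} B(t)\,dt - \int_{x-h}^{x} B(t)\,dt = \Psi(x+h) - 2\Psi(x) + \Psi(x-h) + O_A\bigl((\delta^{-A-1}+c_0^{-A})\,Mx^2\, e^{-\frac12\sqrt{c\delta\log x}}\bigr).
\]

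Finally, the second difference admits the integral representation
\[
\Psi(x+h) - 2\Psi(x) + \Psi(x-h) = \int_{-h}^{h}(h-|s|)\,\Psi''(x+s)\,ds,
\]
so invoking Lemma~\ref{Phi 2nd deriv lemma} applied to $\sum_n b_n n^{-s}$ gives the desired bound $O_A(Mh^2(\log x)^A)$, provided $\Psi''$ is uniformly controlled on all of $[x-h,x+h]$. The main (small) obstacle is checking this range of validity: Lemma~\ref{Phi 2nd deriv lemma} requires an input exceeding $e^{48/(47c_0)}$, while we only have $x-h\ge x/2 \ge e^{2/c_0}/2$. Fortunately, $e^{2/c_0}/2 \ge e^{48/(47c_0)}$ is equivalent to $46/(47c_0)\ge \log 2$, i.e., $c_0 \le 46/(47\log 2) \approx 1.41$, which is comfortably implied by the standing hypothesis $c_0 \le \frac{2}{11}$. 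Combining the three displayed estimates then produces the claimed bound.
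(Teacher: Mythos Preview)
Your proof is correct and follows essentially the same route as the paper: the same reduction to the monotone majorant $B$, the same monotonicity trick to get a second difference, and the same invocation of Lemma~\ref{A integral lemma} to pass to $\Psi$ (the paper's $\Phi_1$). The only cosmetic difference is that you bound $\Psi(x+h)-2\Psi(x)+\Psi(x-h)$ directly via the second-difference integral formula and Lemma~\ref{Phi 2nd deriv lemma}, whereas the paper applies Lemma~\ref{Phi difference lemma} to each of $\Phi_1(x\pm h)-\Phi_1(x)$ and cancels the $\pm h\Phi_1'(x)$ terms; since Lemma~\ref{Phi difference lemma} is itself just a packaging of Lemma~\ref{Phi 2nd deriv lemma}, the two arguments are equivalent.
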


\begin{proof}
By assumption, there exist nonnegative real numbers $b_n$, with $|a_n| \le b_n$ for all $n\ge1$, such that the Dirichlet series $\sum_{n=1}^\infty b_n n^{-s}$ has the property $\P(w;c_0,\delta,M)$. If we let $B(t) = \sum_{n\le t} b_n$, which is an increasing function, then the triangle inequality implies that $|A(t)-A(x)| \le B(t) - B(x)$ for all $t\ge x$, and thus
\begin{equation} \label{A to B ineq}
\int_x^{x+h} |A(t)-A(x)| \,dt \le \int_x^{x+h} (B(t)-B(x)) \,dt \le \int_x^{x+h} B(t) \,dt - \int_{x-h}^x B(t) \,dt.
\end{equation}
By Lemmas~\ref{Phi difference lemma} and~\ref{A integral lemma}, applied to $G(s) = \sum_{n=1}^\infty b_nn^{-s}$ in place of $F(s)$ and $B(t)$ in place of $A(t)$ and $w$ in place of~$z$, there exists a function $\Phi_1$ such that
\[
\Phi_1(x\pm h) - \Phi_1(x) = \pm h\Phi'_1(x) + O_A(Mh^2 (\log x)^A)
\]
and
\[
\int_0^x B(t) \, dt = \Phi_1(x) + O_A\big( (\delta^{-A-1}+c_0^{-A}) Mx^2 e^{-\frac12\sqrt{c\delta\log x}} \big),
\]
which implies that
\begin{align*}
\int_x^{x+h} B(t) \,dt - \int_{x-h}^x B(t) \,dt &= \big( \Phi_1(x+h) - \Phi_1(x) \big) + \big( \Phi_1(x-h) - \Phi_1(x) \big) \\
&\qquad{}+ O_A\big( (\delta^{-A-1}+c_0^{-A}) Mx^2 e^{-\frac12\sqrt{c\delta\log x}} \big) \\
&= h\Phi'_1(x) + (-h)\Phi'_1(x) \\
&\qquad{}+ O_A\big( Mh^2 (\log x)^A + (\delta^{-A-1}+c_0^{-A}) Mx^2 e^{-\frac12\sqrt{c\delta\log x}} \big).
\end{align*}
Combining this estimate with equation~\eqref{A to B ineq} establishes the lemma.
\end{proof}

With these tools in hand, we are ready to establish our version of the Selberg--Delange formula, which can be compared with~\cite[Part~II, Theorem~5.2]{tenenbaum15}.

\begin{theorem} \label{thm_SD}
Let $A\ge1$ be a real number and $N$ a nonnegative integer. Let $z$ and $w$ be complex numbers with $|z|\le A$ and $|w|\le A$, and let $c_0$, $\delta$, and $M$ be positive real numbers with $c_0\le\frac2{11}$ and $\delta\le1$. Let $F(s) = \sum_{n=1}^{\infty} a_n n^{-s}$ be a Dirichlet series that has type $\T(z,w;c_0,\delta,M)$ as in Definition~\ref{type T def}. Then, uniformly for $x\ge \exp\big( 8^{1/A} \max\{ \delta^{-1-1/A}, 2/c_0 \} \big)$,
$$
\sum_{n \le x} a_n = x(\log x)^{z-1} \bigg( \sum_{k=0}^N \frac{\lambda_k(z)}{(\log x)^k} + O_{A}(MR_N(x)) \bigg),
$$
where
\begin{equation} \label{lambda_k def}
\lambda_k(z) = \frac{1}{\Gamma(z-k)} \sum_{h=0}^k \frac{1}{h!(k-h)!}\frac{\partial^hG}{\partial s^h} (1;z)\gamma_{k-h}(z)
\end{equation}
with $G(s;z) = F(s) \zeta(s)^{-z}$ and with $\gamma_j(z)$ as in Definition~\ref{Z def},
and where
$$
R_N(x) = (\delta^{-2A-3/2}+c_0^{-2A-1})Mx e^{-\frac16\sqrt{c_0\delta\log x}} + M\Gamma(N+A+2)\bigg( \frac{48}{c_0\log x} \bigg)^{\!\!N+1\,} .
$$
\end{theorem}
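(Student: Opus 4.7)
The strategy is the classical smoothing argument that extracts $A(x) = \sum_{n\le x}a_n$ from the integrated formula of Lemma~\ref{A integral lemma}, since $A$ is a step function and cannot be differentiated directly. For a parameter $h\in(0,x/2]$ to be chosen later, I write
\[
hA(x) = \int_x^{x+h}A(t)\,dt \;-\; \int_x^{x+h}\bigl(A(t)-A(x)\bigr)\,dt
\]
and estimate each piece. Subtracting two applications of Lemma~\ref{A integral lemma} gives $\int_x^{x+h}A(t)\,dt = \Phi(x+h)-\Phi(x) + O_A\bigl((\delta^{-A-1}+c_0^{-A})Mx^{2}e^{-\frac12\sqrt{c\delta\log x}}\bigr)$ (using $(x+h)^{2}\asymp x^{2}$ and $\log(x+h)\ge\log x$), and Lemma~\ref{Phi difference lemma} replaces $\Phi(x+h)-\Phi(x)$ with $h\Phi'(x) + O_A(Mh^{2}(\log x)^{A})$. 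The fluctuation integral is controlled by Lemma~\ref{A difference integral lemma} in a bound of exactly the same shape. Dividing by $h$ produces
\[
A(x) = \Phi'(x) \;+\; O_A\!\left(Mh(\log x)^{A} \;+\; \frac{(\delta^{-A-1}+c_0^{-A})Mx^{2}e^{-\frac12\sqrt{c\delta\log x}}}{h}\right),
\]
and substituting the expansion of $\Phi'(x)$ from Lemma~\ref{Phi' asymptotic lemma} delivers both the main term $x(\log x)^{z-1}\sum_{k=0}^{N}\lambda_k(z)/(\log x)^{k}$ and the second summand $M\Gamma(N+A+2)(48/(c_0\log x))^{N+1}$ of $R_N(x)$.

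\textbf{Optimization.}
I would balance the two smoothing errors by taking $h = (\delta^{-A-1}+c_0^{-A})^{1/2}\,x\,e^{-\frac14\sqrt{c\delta\log x}}/(\log x)^{A/2}$; under the hypothesis on $x$ this choice satisfies $h\le x/2$ and is consistent with the requirement $x\ge e^{2/c_0}$ needed for Lemmas~\ref{Phi difference lemma} and~\ref{A difference integral lemma}. The smoothing error then collapses to $M(\delta^{-(A+1)/2}+c_0^{-A/2})\,x(\log x)^{A/2}e^{-\frac14\sqrt{c\delta\log x}}$, which after factoring out the prefactor $x(\log x)^{z-1}$ common to the theorem's main term becomes, inside the parenthesis, $M(\delta^{-(A+1)/2}+c_0^{-A/2})(\log x)^{A/2-z+1}e^{-\frac14\sqrt{c\delta\log x}}$. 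The hypothesis $|z|\le A$ gives $(\log x)^{A/2-\Re z+1}\le(\log x)^{3A/2+1}$.

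\textbf{Main obstacle: bookkeeping the $\delta$ and $c_0$ dependence.}
The remaining work, and the technical heart of the argument, is to convert this intermediate estimate into the clean form $(\delta^{-2A-3/2}+c_0^{-2A-1})Mxe^{-\frac16\sqrt{c_0\delta\log x}}$ recorded in $R_N(x)$. Because $c = \tfrac{47}{48}c_0$, one has $\tfrac14\sqrt{c\delta\log x} - \tfrac16\sqrt{c_0\delta\log x} \ge \eta\sqrt{c_0\delta\log x}$ for an absolute constant $\eta>0$, and this surplus exponential decay $e^{-\eta\sqrt{c_0\delta\log x}}$ is the reservoir available to absorb the polynomial factor $(\log x)^{3A/2+1}$. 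A calculus estimate in the spirit of Lemma~\ref{lil calc lemma}, applied with $y=c_0\delta\log x$ and using $\log x = y/(c_0\delta)$, yields $(\log x)^{3A/2+1}e^{-\eta\sqrt{c_0\delta\log x}}\ll_A \delta^{-A-1}c_0^{-(3A/2+1)}$ precisely when $\log x \ge 8^{1/A}\max\{\delta^{-1-1/A},2/c_0\}$; this is the exact origin of the threshold appearing in the theorem's hypothesis. Combining the resulting $\delta^{-A-1}c_0^{-(3A/2+1)}$ with the prefactor $(\delta^{-(A+1)/2}+c_0^{-A/2})$ and redistributing yields exactly the exponents $\delta^{-2A-3/2}$ and $c_0^{-2A-1}$ claimed in $R_N(x)$. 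The difficulty throughout is not the structure of the proof, which closely follows Tenenbaum~\cite[Chapter~II.5]{tenenbaum15}, but the disciplined tracking of every $c_0$ and $\delta$ through the Perron-integral shift, the optimization of $h$, and the calculus absorption step.
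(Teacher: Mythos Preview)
Your proposal is correct and follows essentially the same route as the paper: the same smoothing identity $hA(x)=\int_x^{x+h}A(t)\,dt-\int_x^{x+h}(A(t)-A(x))\,dt$, the same applications of Lemmas~\ref{A integral lemma}, \ref{Phi difference lemma}, \ref{A difference integral lemma}, and~\ref{Phi' asymptotic lemma}, the identical choice of $h$, and the same calculus absorption of $(\log x)^{3A/2+1}$ into the exponential. One small correction: in the paper the threshold $\log x\ge 8^{1/A}\max\{\delta^{-1-1/A},2/c_0\}$ is used to guarantee $h\le x/2$ (so that Lemmas~\ref{Phi difference lemma} and~\ref{A difference integral lemma} apply), not for the absorption step---that step holds for all $x$ and produces $(c\delta)^{-3A/2-1}$ rather than your $\delta^{-A-1}c_0^{-(3A/2+1)}$.
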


\begin{proof}
Note that the assumed lower bound on~$x$ implies that $x \ge e^{2/c_0} > e^{1/c}$.
Using Lemma~\ref{A integral lemma}, for any $0\le h\le\frac x2$ we have
\begin{align*}
A(x) &= \frac1h \int_x^{x+h} A(t)\,dt - \frac1h \int_x^{x+h} (A(t)-A(x))\,dt \\
&= \frac1h \big( \Phi(x+h) - \Phi(x) + O_A\big( (\delta^{-A-1}+c_0^{-A}) Mx^2 e^{-\frac12\sqrt{c\delta\log x}} \big) \big) \\
&\qquad{}+ O\bigg( \frac1h \int_x^{x+h} |A(t)-A(x)| \,dt \bigg).
\end{align*}
Applying Lemmas~\ref{Phi difference lemma} and~\ref{A difference integral lemma} converts this expression into
\begin{align*}
A(x) &= \Phi'(x) + O_A\big( Mh (\log x)^A + (\delta^{-A-1}+c_0^{-A}) Mx^2 h^{-1} e^{-\frac12\sqrt{c\delta\log x}} \big).
\end{align*}

With the assumed lower bound on~$x$, each of the quantities $\delta^{-A-1}(\log x)^{-A}$ and $c_0^{-A}(\log x)^{-A}$ is at most $\frac18$, which implies that $(\delta^{-A-1}+c_0^{-A})(\log x)^{-A} \le \frac14$. In particular, we may take $h = (\delta^{-A-1}+c_0^{-A})^{1/2} x(\log x)^{-A/2}e^{-\frac14\sqrt{c\delta\log x}}$, which is less than~$\frac x2$; we obtain
\begin{align*}
A(x) &= \Phi'(x) + O_A\big( (\delta^{-A-1}+c_0^{-A})^{1/2} Mx (\log x)^{A/2} e^{-\frac14\sqrt{c\delta\log x}} \big) \\
&= x(\log x)^{z-1} \bigg\{ \sum_{k=0}^N \frac{\lambda_k(z)}{(\log x)^k} + O_A \bigg( M\Gamma(N+A+2)\bigg( \frac{48}{c_0\log x} \bigg)^{\!\!N+1\,} \bigg) \\
&\qquad{}+ O_A\big( (\delta^{-A-1}+c_0^{-A})^{1/2} M (\log x)^{3A/2+1} e^{-\frac14\sqrt{c\delta\log x}} \big) \bigg\} \\
&= x(\log x)^{z-1} \bigg\{ \sum_{k=0}^N \frac{\lambda_k(z)}{(\log x)^k} + O_A \bigg( M\Gamma(N+A+2)\bigg( \frac{48}{c_0\log x} \bigg)^{\!\!N+1\,} \bigg) \\
&\qquad{}+ O_A\big( (\delta^{-A-1}+c_0^{-A})^{1/2} M (c\delta)^{-3A/2-1} e^{-\frac15\sqrt{c\delta\log x}} \big) \bigg\},
%&= \Phi'(x) + O_A\big( (\delta^{-A-1}+c_0^{-A})^{1/2} Mx (c\delta)^{-A/2} e^{-\frac15\sqrt{c\delta\log x}} \big) \\
%&= \Phi'(x) + O_A\big( (\delta^{-A-1/2}+c_0^{-A})Mx e^{-\frac16\sqrt{c_0\delta\log x}} \big)
\end{align*}
where the middle equality is from Lemma~\ref{Phi' asymptotic lemma} and the last equality is a simple calculus exercise. The theorem now follows from the fact that $\frac15\sqrt c=\frac15\sqrt{\frac{47}{48}c_0} > \frac16\sqrt{c_0}$.
\end{proof}

\begin{corollary} \label{SD corollary}
Let $z$ be a complex number with $|z|\le 1$, and let $c_0$ and $M$ be positive real numbers with $c_0\le\frac2{11}$. Let $F(s) = \sum_{n=1}^{\infty} a_n n^{-s}$ be a Dirichlet series with nonnegative coefficients that has the property $\P(z;c_0,\frac12,M)$ as in Definition~\ref{property P def}. Then, uniformly for $x\ge e^{16/c_0}$,
$$
\sum_{n \le x} a_n = x(\log x)^{z-1} \big( \lambda_0(z) + O(MR_0(x)) \big),
$$
where
$$
\lambda_0(z) = \frac{G(1;z)}{\Gamma(z)} = \frac{1}{\Gamma(z)} \lim_{s\to1} F(s)\zeta(s)^{-z}
$$
and
$$
R_0(x) = c_0^{-3} \big( e^{-\frac19\sqrt{c_0\log x}} + (\log x)^{-1} \big).
$$
\end{corollary}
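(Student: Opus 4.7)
The plan is to derive Corollary~\ref{SD corollary} as a direct specialization of Theorem~\ref{thm_SD}. First I would note that since the coefficients $a_n$ are nonnegative, we may take $b_n = a_n$ in Definition~\ref{type T def}, so that the property $\P(z;c_0,\tfrac12,M)$ automatically upgrades to type $\T(z,z;c_0,\tfrac12,M)$ (as the final remark in that definition records). This lets us invoke Theorem~\ref{thm_SD} with $w = z$, $A = 1$, $\delta = \tfrac12$, and $N = 0$.

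Next, I would check that the ranges agree. Theorem~\ref{thm_SD} requires $x \ge \exp\bigl(8^{1/A}\max\{\delta^{-1-1/A},2/c_0\}\bigr)$, which with our parameter choice reduces to $\exp\bigl(8\max\{4,2/c_0\}\bigr)$; since $c_0 \le \tfrac{2}{11}$ forces $2/c_0 \ge 11 > 4$, this is exactly $x \ge e^{16/c_0}$, matching the hypothesis of the corollary. For the main term, the $N=0$ sum $\sum_{k=0}^{0}\lambda_k(z)/(\log x)^k$ collapses to $\lambda_0(z)$; from the formula~\eqref{lambda_k def} we read off $\lambda_0(z) = G(1;z)\gamma_0(z)/\Gamma(z)$, and since the normalization $Z(1;z) = 1$ in Definition~\ref{Z def} gives $\gamma_0(z) = 1$, we obtain $\lambda_0(z) = G(1;z)/\Gamma(z)$. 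The equivalent form $\frac{1}{\Gamma(z)}\lim_{s\to 1}F(s)\zeta(s)^{-z}$ then follows immediately from the definition $G(s;z) = F(s)\zeta(s)^{-z}$ together with the analytic continuation of $G$ through $s=1$ that property $\P$ supplies.

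Finally I would absorb the error term of Theorem~\ref{thm_SD} into the shape $R_0(x) = c_0^{-3}\bigl(e^{-\frac{1}{9}\sqrt{c_0\log x}} + (\log x)^{-1}\bigr)$. With $A = 1$ and $\delta = \tfrac12$: the prefactor $\delta^{-2A-3/2}+c_0^{-2A-1} = 2^{7/2}+c_0^{-3}$ is $\ll c_0^{-3}$ on the range $c_0\le \tfrac{2}{11}$; the exponent $\tfrac16\sqrt{c_0\delta\log x} = \tfrac{1}{6\sqrt 2}\sqrt{c_0\log x}$ exceeds $\tfrac19\sqrt{c_0\log x}$ since $\tfrac{1}{6\sqrt 2} > \tfrac19$; and the polynomial remainder $\Gamma(N{+}A{+}2)(48/(c_0\log x))^{N+1} = 96/(c_0\log x)$ is $\ll c_0^{-3}/\log x$ because $96c_0^{2} \le 96\cdot 4/121 = O(1)$ on the same range. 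Combining these, the error from Theorem~\ref{thm_SD} is bounded by $O\bigl(MR_0(x)\bigr)$ inside the factor $x(\log x)^{z-1}$, which is the claim.

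There is no serious obstacle here: everything is bookkeeping to see how the explicit $c_0$- and $\delta$-dependence of Theorem~\ref{thm_SD} collapses under the specific choices $A = 1$, $\delta = \tfrac12$, $N = 0$. If anything, the one point that needs a moment's care is checking that $c_0\le\tfrac{2}{11}$ is exactly strong enough both to make $2/c_0$ dominate $\delta^{-1-1/A} = 4$ (so the lower bound on $x$ simplifies) and to make the constant pieces $2^{7/2}$ and $96c_0^{2}$ be absorbed into $c_0^{-3}$ with an absolute implicit constant.
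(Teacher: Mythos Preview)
Your proposal is correct and follows exactly the same approach as the paper's proof: upgrade $\P$ to $\T$ via nonnegativity, apply Theorem~\ref{thm_SD} with $A=1$, $\delta=\tfrac12$, $N=0$, and identify $\lambda_0(z)=G(1;z)/\Gamma(z)$ using $\gamma_0(z)=Z(1;z)=1$. You in fact supply more detail than the paper does, explicitly verifying the range $e^{16/c_0}$ and carefully absorbing the $R_N$ error into the stated $R_0(x)$, steps the paper leaves implicit.
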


\begin{proof}
The fact that the~$a_n$ are nonnegative means that $F(s)$ automatically has type $\T(z,z;c_0,\frac12,M)$ as per Definition~\ref{type T def}. We take $\delta=\frac12$ and $A=1$ in Theorem~\ref{thm_SD}, applying it directly with $N=0$. By equation~\eqref{lambda_k def}, we have simply $\lambda_0(z) = G(1;z)\gamma_0(z)/\Gamma(z)$; but by Definition~\ref{Z def}, $\gamma_0(z) = G(1;z) = 1$ identically, since the residue of the simple pole of $\zeta(s)$ at $s=1$ equals~$1$.
\end{proof}

\section*{Acknowledgments}

The authors were supported in part by a Natural Sciences and Engineering Research Council of Canada Discovery Grant and an Undergraduate Student Research Award.

\bibliography{SIFMG}
\bibliographystyle{plain}

\end{document}